\documentclass[a4paper, 11pt]{article}
\usepackage[top = 0.5 in, left = 0.5in, right = 0.5in, bottom = 0.7in]{geometry}
\usepackage{amsmath, amssymb, amsthm, indentfirst, placeins, float, graphicx, booktabs, verbatim, xcolor, titlesec, titling, mathrsfs, dsfont, multirow, tikz, natbib, subcaption, diagbox,  enumitem}

\usepackage{setspace}
\allowdisplaybreaks[4]
\pretitle{\begin{center}\bfseries\Large}
\posttitle{\par\end{center}}
\usepackage{caption}
\usepackage[hyperindex,breaklinks]{hyperref}
\hypersetup{linkcolor=blue, citecolor=teal, colorlinks=true}

\usepackage{mathtools}                      
\mathtoolsset{showonlyrefs=true}

\newtheorem{theorem}{Theorem}
\newtheorem{lemma}[theorem]{Lemma}

\newtheorem{proposition}[theorem]{Proposition}

\newtheorem{definition}[theorem]{Definition}

\newtheorem{remark}[theorem]{Remark}

\numberwithin{equation}{section}
\numberwithin{theorem}{section}

\newcommand\Lc{\mathcal{L}}

\newcommand\Uc{\mathcal{U}}

\newcommand{\Rb}{\mathbb{R}}

\newcommand{\dd}{\mathrm{d}}

\newcommand{\rev}[1]{{\leavevmode\color{blue}{#1}}}

\allowdisplaybreaks
\begin{document}
	
	\title{Optimal Dividend, Reinsurance, and Capital Injection for Collaborating Business Lines under Model Uncertainty}

    \author{
    Tim J. Boonen\thanks{Department of Statistics and Actuarial Science, School of Computing and Data Science, University of Hong Kong, China. Email: \texttt{tjboonen@hku.hk}.}
    \and
    Engel John C. Dela Vega\thanks{Department of Statistics and Actuarial Science, School of Computing and Data Science, University of Hong Kong, China. Email: \texttt{engel\_john.dela\_vega@mymail.unisa.edu.au}.}
    \and
    Len Patrick Dominic M. Garces\thanks{School of Mathematical and Physical Sciences, Faculty of Science, University of Technology Sydney, Australia.
    E-mail: \texttt{LenPatrickDominic.Garces@uts.edu.au}.}
    }
	
	\date{}	
	\maketitle

    \vspace*{-1.5cm} 
    
    {\setstretch{1}
    \begin{abstract}
		This paper considers an insurer with two collaborating business lines that faces three critical decisions: (1) dividend payout, (2) reinsurance coverage, and (3) capital injection between the lines, in the presence of model uncertainty. The insurer considers the reference model to be an approximation of the true model, and each line has its own robustness preference. The reserve level of each line is modeled using a diffusion process. The objective is to obtain a robust strategy that maximizes the expected weighted sum of discounted dividends until the first ruin time, while incorporating a penalty term for the distortion between the reference and alternative models in the worst‑case scenario. We completely solve this problem and obtain the value function and optimal (equilibrium) strategies in closed form. We show that the optimal dividend-capital injection strategy is a barrier strategy. The optimal proportion of risk ceded to the reinsurer and the deviation of the worst-case model from the reference model are decreasing with respect to the aggregate reserve level. Finally, numerical examples are presented to show the impact of the model parameters and ambiguity aversion on the optimal strategies.

        \medskip

        \noindent\textbf{Keywords}: Risk management, optimal dividends, proportional reinsurance, model uncertainty, collaborating lines.
	\end{abstract}
    }

    \section{Introduction}

    The optimal dividend payout problem is a classical topic in actuarial science, operational research, and mathematical finance. In his seminal work, \citet{definetti1957} proposed maximizing the dividends paid to the shareholders over the lifetime of an insurance portfolio as an alternative performance measure to minimizing the probability of ruin. Under the framework of \citet{definetti1957}, the optimal way to distribute dividends is the so-called \emph{barrier}-type strategy, in which all reserves exceeding a fixed barrier level are immediately paid out as dividends. Since then, numerous extensions and variations have been explored; see, e.g., \citet{schmidli2007book}, \citet{albrecher2009}, and \citet{avanzi2009} for comprehensive overviews. However, much of the literature has focused on single business lines. 

    We consider a multivariate extension of the classical framework of \citet{definetti1957} in which each business line has its own reserve process. The insurer's objective is to maximize the total discounted dividends paid by the business lines to the shareholders up to the portfolio's ruin time. This raises an important question: how should ruin time be defined in a multivariate setting? Several natural definitions appear in the literature. The most common definitions include \emph{first} ruin time: the first time when at least one of the reserve levels reaches or falls below zero; \emph{sum} ruin time: the first time that the total reserves of all lines reach or fall below zero; \emph{simultaneous} ruin time: the first time that all of the reserve levels reach or fall below zero simultaneously; and \emph{last} ruin time: the first time that all of the reserve levels, not necessarily simultaneously, have reached or fallen below zero. As such, the problem of minimizing the probability of ruin has been widely studied; see, e.g., \citet[Chapter XIII]{asmussenbook2010} and the references therein.

    There have been several works in the multivariate framework that focus on maximizing the total discounted dividends to shareholders until ruin time. To the best of the authors' knowledge, \citet{czarna2011} are the first to study this problem under the Cram\'{e}r-Lundberg (CL) model (i.e., a compound Poisson process). Most subsequent work has also used the CL model; see \citet{liucheung2014}, \citet{albrecher2017}, \citet{azcue2019}, \citet{azcuemuler2021}, and \citet{strietzel2022}. Reserve levels modeled by diffusion processes have also been explored in the multivariate context; see \citet{gu2018}, \citet{grandits2019saj}, \citet{yang2025}, \citet{boonen2025collaboratingbusinesslines}, and \citet{boonen2025xsofloss}.

    Managing risk across multiple lines of business is also essential for ensuring that sufficient reserves are available for dividend payouts. Reinsurance, as a form of risk control, has been explored in the multivariate framework of optimal dividend payout problems. Proportional reinsurance is the predominant type; see \citet{czarna2011}, \citet{liucheung2014}, \citet{azcue2019}, \citet{strietzel2022}, \citet{boonen2025collaboratingbusinesslines}, and \citet{yang2025}. Excess-of-loss reinsurance has also been studied in \citet{boonen2025xsofloss}.

    Capital injection is another form of risk management that allows insurers to prevent ruin. In the context of multiple business lines, capital injection can be made between lines to save one line from insolvency, provided sufficient reserves are available. This type of capital injection is referred to as \emph{collaboration}. In \citet{albrecher2017}, \citet{gu2018}, \citet{boonen2025collaboratingbusinesslines}, and \citet{boonen2025xsofloss}, the rule of collaboration permits the transfer of reserves between lines only when one is at risk of ruin and provided that the transfer does not endanger the solvent line. In contrast, \citet{grandits2019saj} allows for capital transfers but does not oblige any line to save another.

    A common feature of the above-mentioned literature is the assumption that the probabilistic model that governs the reserve processes is known. Parameters such as the drift, volatility, and even the claim intensity are assumed to be accurate. In practice, however, these quantities are estimated from historical data and are subject to statistical error and misspecification. As a result, strategies obtained under a single reference model may perform poorly. Ignoring model uncertainty may lead to overly aggressive dividend payout policies, insufficient reinsurance coverage, or weak capital injection rules. Model ambiguity has been considered in several insurance decision problems, including reinsurance with multiple reinsurers; see, e.g., \citet{cao2023reinsurance}. For a comprehensive discussion of decision-making under model uncertainty, we refer the reader to \citet{hansen:robustness}.

    Motivated by these considerations, we incorporate model uncertainty into the problem of optimal dividend, reinsurance, and capital injection between collaborating business lines. More precisely, the reference probability measure is regarded by the insurer as an approximation to the true, albeit unknown, model, which is assumed to belong to a set of alternative probability measures. The insurer then considers the strategies under the worst-case model within this set, which, consequently, promotes robustness against possible model misspecification. To prevent the insurer from accepting implausible alternative models that are ``too far" from the reference model, a discounted penalty term is introduced in the optimization criterion. This penalty is based on relative entropy, which quantifies the distance (i.e., divergence) between probability measures, and is modified to account for both a normalization factor and line-specific ambiguity aversion. The resulting formulation falls within a penalty-based robust control approach pioneered by \citet{anderson2003} and \citet{hansen2006}. More specifically, the formulation of decision-making under model uncertainty in this paper follows the homothetic robustness formulation proposed by \citet{maenhout2004}. In this formulation, the penalty term is dependent on the value function itself, which preserves the analytical tractability of the robust control problem. This approach to modeling uncertainty has been adopted in robust portfolio optimization \citep{weiyangzhuang2023,garcesshen2025} and robust reinsurance \citep{huchenwang2018,feng2021}.

    Robust formulations of the optimal dividend payout problem have been studied in the univariate setting. \citet{feng2021} consider proportional reinsurance with an Ornstein-Uhlenbeck-type reserve process. \citet{chakraborty2023} consider a Brownian risk model with a generalized reward function and a penalty term based on the Kullback-Leibler divergence. Capital injection via equity issuance has been studied in \citet{wang2025} under the standard deviation premium principle, and robust dividend strategies within the classical CL framework are investigated in \citet{feng2024}. The impact of model uncertainty in a multivariate setting remains largely unexplored.

    In this work, we consider an insurer that must simultaneously determine (i) dividend payout policies, (ii) (proportional) reinsurance coverage levels, and (iii) capital injections between collaborating business lines under model uncertainty. We apply a diffusion model for the risk exposure of each line, and we allow correlation between the lines. The goal is to maximize the weighted total discounted dividends until the first ruin time, taking into account the worst-case scenario. We incorporate a penalty term based on relative entropy, which penalizes deviations from the reference model. To the best of our knowledge, this is the first study on an optimal dividend payout problem involving reinsurance coverage and capital injection between collaborating lines while explicitly accounting for model uncertainty.

    We summarize the main contributions of this paper as follows:

    \begin{enumerate}
        \item We identify three general scenarios and derive, in closed form, the corresponding value function and optimal strategies for each scenario (see Theorems \ref{theorem 1}, \ref{theorem 2}, and \ref{theorem 3}). The scenarios are determined by whether the ambiguity aversion of each line is sufficiently large and whether the interplay between the parameters, particularly risk, ambiguity, and correlation, satisfies certain technical conditions. This is analogous to the results in \citet{feng2021}, who also identify three scenarios in the univariate setting with an Ornstein-Uhlenbeck-type reserve process. Setting $\rho=0$ in their notation corresponds to our case where one line has ceded all of its risk to the reinsurer (see Section \ref{subsec: remaining case}).

        \item We show that an optimal dividend and capital injection strategy is a barrier strategy. More precisely, we prove that there exists a barrier level $b\geq 0$ such that the optimal strategy maintains the aggregate reserve below $b$, with dividends paid exclusively by the more important line, and capital is transferred from the less important line, ensuring sufficient reserves for dividend distribution. This is consistent with the results presented in \citet{gu2018}, where they also discuss dividend payout with capital injections between collaborating lines. For sufficiently high ambiguity aversion, the optimal barrier collapses to $b=0$, implying that the optimal reinsurance and distortion strategies become irrelevant (see Theorem \ref{theorem 3}).

        \item We prove that, for sufficiently low ambiguity aversion, the optimal retained reinsurance levels are increasing with respect to the aggregate reserve level. Moreover, except in the case where one line adopts full reinsurance at all times (see Proposition \ref{prop case 2}), the two optimal reinsurance levels are simultaneously constant. The monotonicity property is consistent with existing results in the literature, for instance in \citet{hojgaard1999} for the univariate case and \citet{boonen2025collaboratingbusinesslines} for the multivariate case.

        \item We show that, for sufficiently low ambiguity aversion, the deviation of the worst-case probability measure from the reference measure decreases as the aggregate reserve level increases. The deviation remains constant and attains its maximal level as long as both lines engage in proportional reinsurance. Once one line switches to full reinsurance, the deviation strictly decreases.

        \item We further explore the effects of model uncertainty via the ambiguity-aversion parameters on the optimal strategies in our numerical illustrations in Section \ref{sec:numerical}. We observe that the deviation of the worst-case model from the reference model becomes less sensitive to further increases in the aggregate reserve level. In addition, changes in the ambiguity-aversion parameters can alter which line retains all of its risk beyond a certain reinsurance threshold. Finally, we find that strong asymmetry in the ambiguity-aversion parameters between the two lines can lead to a lower dividend barrier and a lower reserve threshold at which one line optimally retains all of its risk.
        
    \end{enumerate}

The rest of the paper is organized as follows. Section \ref{sec:model} introduces the model, the formulation of the problem, and the corresponding verification theorem. Section \ref{sec:AnalyticalSol} presents the main results, while Section \ref{sec:no mod.unc.} discusses the results under the assumption of no model uncertainty. Numerical examples are presented in Section \ref{sec:numerical}. The proofs of the main results are presented in Section \ref{sec:proofs}. Section \ref{sec:conclusion} concludes. Appendices \ref{app: verification proof} and \ref{app: theorem no model unc} provide the proofs of the verification theorem and the results concerning no model uncertainty, respectively, and Appendix \ref{sec:MU comp b} provides additional numerical results.
    
	\section{Model}\label{sec:model}

    \subsection{Problem Formulation}
	We fix a complete filtered probability space $(\Omega,\mathcal{F},\mathbb{F},\mathbb{P})$, where $\mathbb{F}:=\{\mathcal{F}_t\}_{t\geq 0}$ is a right-continuous, $\mathbb{P}$-completed filtration generated by two independent Brownian motions $B_1:=\{B_1(t)\}_{t\geq 0}$ and $B_2:=\{B_2(t)\}_{t\geq 0}$. Define two correlated Brownian motions $W_1:=\{W_1(t)\}_{t\geq 0}$ and $W_2:=\{W_2(t)\}_{t\geq 0}$ by $W_1(t) := B_1(t)$ and $W_2(t) := \rho B_1(t) + \sqrt{1-\rho^2}\, B_2(t)$,
	where $\rho\in(-1,1)$ captures the correlation between the two Brownian motions $W_1$ and $W_2$. 
    
	We consider an insurer that begins with a reference model for the reserve process under the reference probability measure $\mathbb{P}$, which serves as an approximation to the ``true" model. Moreover, the insurer has two collaborating business lines, where the risk exposure of Line $i$, with $i=1,2$, is governed by a diffusion process given by
	\begin{equation}
		\label{eq:dR}
		\dd R_i(t) = \tilde\mu_i \, \dd t + \sigma_i \, \dd W_i(t),
	\end{equation}
    where $\tilde \mu_i,\sigma_i > 0$. The insurer charges a premium for taking the risk $R_i$ via the expected-value principle with a loading factor $\alpha_i\geq 0$; that is, the premium rate for Line $i$ is given by $\tilde P_i(t) = (1 + {\alpha}_i) \tilde\mu_i.$
    
    The insurer is faced with three types of decisions regarding the operation of each line: reinsurance, dividend payout, and capital injection between the lines. Each decision is described as follows:
    \begin{enumerate}
        \item Reinsurance decision. For each line, the insurer can purchase \emph{proportional} reinsurance to transfer part of its risk $R_i$ to a reinsurer. For $i=1,2$, let $\pi_i(t) \in [0,1]$ denote the retained proportion of Line $i$ at time $t$ and $P_i(t)$ the premium for such a reinsurance policy. The premium is calculated using the same expected-value principle and relative safety loading $\alpha_i$; that is, $P_i(t) = (1 + \alpha_i) \, (1-\pi_i(t) )\, \tilde\mu_i$.
        The loadings for insurance and reinsurance are equal, which is sometimes referred to as ``cheap reinsurance".

    \item Dividend payout decision. The insurer also chooses a dividend strategy to distribute profits to the shareholders for each line. Let $C_i:=\{C_i(t)\}_{t\geq 0}$ denote the total dividends paid by Line $i$ to the shareholders. This is called an unrestricted (or unbounded) dividend-payment strategy (\citealp[see Section 3 in][or Case C in]{hojgaard1999} \citealp{jeanblanc1995optimization}). We treat $C_1$ and $C_2$ as singular-type controls. 

    \item Capital injection decision. In addition to seeking protection from outside reinsurers, the insurer can also allocate surplus from one line to another, at \emph{no cost}. Let $L_i:=\{L_i(t)\}_{t\geq 0}$ denote the \emph{cumulative} amount of capital transferred into Line $i$ from Line $3-i$; both $L_1$ and $L_2$ are nondecreasing, c\'{a}dl\'{a}g, singular-type controls.
    \end{enumerate}

	Given an admissible control $u:=(\pi_1, \pi_2, C_1, C_2, L_1, L_2)$, the reserve process of Line $i$, denoted by $X_i:=X_i^u$, follows the dynamics
	\begin{equation}
		\begin{aligned}\label{eq:SurplusProcess}
			\dd X_i(t) &= \big( \tilde P_i(t) - P_i(t) \big) \, \dd t - \pi_i(t)  \, \dd R_i(t) - \dd C_i(t) + \dd L_i(t) - \dd L_{3-i}(t) ,  \\
			&= \mu_i\pi_i(t) \dd t - \sigma_i \pi_i(t) \dd W_i(t) - \dd C_i(t) + \dd L_i(t) - \dd L_{3-i}(t),
		\end{aligned}
	\end{equation}
	where $\mu_i:=\alpha_i\tilde\mu_i$ is the adjusted mean and $X_i(0)=x_i>0$ is the initial reserve level of Line $i$.

    In the framework of model uncertainty, due to insufficient data, the insurer does not have full confidence in the reference model. The insurer is ambiguity averse and thus prefers to consider a family of unspecified alternative models, which are not too far away from the reference model. Write $\theta_i:=\{\theta_i(t)\}_{t\geq 0}$ for $i=1,2$, and set
$\theta:=(\theta_1,\theta_2)$. We assume that $\theta$ is progressively
measurable with respect to the filtration $\mathbb F$ and satisfies the
following Novikov-type condition: for any $t>0$,
\begin{equation}\label{eqn:Novikov}
\mathbb E^{\mathbb P}\!\left[
\exp\!\left(
\frac{1}{2}\int_0^t
\Big(
\theta_1^2(s)
+ \frac{\big(\theta_2(s)-\rho\,\theta_1(s)\big)^2}{1-\rho^2}
\Big)\,\dd s
\right)\right]<\infty.
\end{equation}

We define the alternative probability measure $\mathbb Q^{\theta}$ on
$\mathcal F_t$ by
\begin{equation*}
\frac{\dd \mathbb Q^{\theta}}{\dd\mathbb P}\Bigg|_{\mathcal F_t}
=
\exp\Bigg(
\int_0^t \theta_1(s)\,\dd B_1(s)
+
\int_0^t \frac{\theta_2(s)-\rho\,\theta_1(s)}{\sqrt{1-\rho^2}}\,\dd B_2(s)
-\frac{1}{2}\int_0^t
\Big[
\theta_1^2(s)
+
\frac{\big(\theta_2(s)-\rho\,\theta_1(s)\big)^2}{1-\rho^2}
\Big]\dd s
\Bigg).
\end{equation*}
By Girsanov's theorem for multidimensional Brownian motion (see, for example,
\citealp[Theorem 8.6.6,]{oksendal:sde} or \citealt[page 21]{pham:control}), for any
time $t_0>0$ the processes $\{W^{\mathbb Q^{\theta}}_i(t)\}_{t \geq 0}$, where
\begin{equation*}
W^{\mathbb Q^{\theta}}_i(t)
:= W_i(t) - \int_0^t \theta_i(s)\,\dd s,\qquad
0 \leq t \leq t_0,\ i=1,2,
\end{equation*}
form a two-dimensional Brownian motion with correlation $\rho$ under
$\mathbb Q^{\theta}$. It should also be noted that if
$\theta_1(t)=\theta_2(t)\equiv0$, then the alternative measure $\mathbb Q^{\theta}$
coincides with the reference measure $\mathbb P$.

We can then express the dynamics of the controlled reserve process of Line $i$
under $\mathbb Q^{\theta}$ as
\begin{align}
\dd X_i(t)
&= \mu_i\pi_i(t)\,\dd t
- \sigma_i\pi_i(t)\,\dd W_i(t)
- \dd C_i(t) + \dd L_i(t) - \dd L_{3-i}(t)\\
&= \mu_i\pi_i(t)\,\dd t
- \sigma_i\pi_i(t)\Big[\dd W^{\mathbb Q^{\theta}}_i(t)
+ \theta_i(t)\,\dd t\Big]
- \dd C_i(t) + \dd L_i(t) - \dd L_{3-i}(t)\\
&= \big[\mu_i\pi_i(t)-\sigma_i\theta_i(t)\pi_i(t)\big]\dd t
- \sigma_i\pi_i(t)\,\dd W^{\mathbb Q^{\theta}}_i(t)
- \dd C_i(t) + \dd L_i(t) - \dd L_{3-i}(t),
\end{align}
for $i=1,2$.

We formally define admissible strategies below.
    \begin{definition}
        The strategies $u$ and $\theta$ are said to be admissible if $u$ is adapted to the filtration $\mathbb F$, $\theta$ is progressively measurable with respect to $\mathbb F$, and they satisfy
        \begin{enumerate}[label=(\roman*)]
            \item $\pi_i(t)\in[0,1]$ for $i=1,2$ and $t\geq 0$;
            \item $C_i$ and $L_i$ are nonnegative, nondecreasing, and right-continuous with left limits, for $i=1,2$;
            \item $(\theta_1,\theta_2)$ satisfies Novikov's condition in \eqref{eqn:Novikov} for $t\geq 0$.
        \end{enumerate}
        Denote by $\Uc$ the set of all admissible $u$, and $\Theta$ the set of all admissible $\theta$. 
    \end{definition}

    In this setting, the insurer has reservations about the reference model and views it merely as an approximation to the ``true" model. Since the ``true" model (if one exists) is also in the set of alternative models, the ambiguity-averse insurer wants to find a robust decision rule that is feasible across all alternative models. A natural approach is to analyze the problem from a worst-case perspective.

	We now formulate the corresponding robust control problem. Define the (first) ruin time, $\tau^u$, by \[\tau^u := \inf\{t > 0 : \min\{X^u_1(t),X^u_2(t)\} \leq  0 \}.\]
	The goal of the insurer is to seek a robust optimal control that solves the following optimization problem:
    \begin{equation}\label{eqn: value function}
        V(x_1,x_2)=\sup_{u\in\mathcal U} \inf_{\theta \in \Theta} \mathbb E^{\mathbb Q^{\theta}}\left[\sum_{i=1}^2a_i\left(\int_0^{\tau^u}e^{-\delta t}\dd C_i(t)+\int_0^{\tau^u}e^{-\delta t}\frac{\theta_i^2(t)V(X^{u}_1(t),X^u_2(t))}{2\widetilde\beta_i}\dd t\right)\right]  ,
    \end{equation}
    where $a_i\in(0,1)$ is the relative importance of Line $i$ and satisfies $a_1+a_2=1$, $\widetilde\beta_i$ is the robustness preference parameter of Line $i$, and $\delta$ is the discount rate. Moreover, the expectation $\mathbb E^{\mathbb Q^{\theta}}$ is taken under $X^u_1(0)=x_1$ and $X^u_2(0)=x_2$. Larger $\widetilde \beta_i$
 implies a lower penalty on model distortion and therefore stronger ambiguity aversion in the worst-case formulation.

    \begin{remark}\label{remark: zero-sum}
        The first term in \eqref{eqn: value function} is similar to the objective in \cite{gu2018}, where they also consider an insurer with two collaborating lines. However, they do not allow the insurer to purchase reinsurance coverage and there is no model uncertainty. 
        
        \cite{feng2021} also consider a similar optimization problem with both terms in \eqref{eqn: value function} present in their objective function, but in the univariate framework. The second term can be interpreted as a ``penalty" with respect to the minimization problem over $\theta$, derived from the form of discounted relative entropy. 

        We can also see the optimization problem in \eqref{eqn: value function} as a zero-sum stochastic differential game whose players are (i) the insurer with control $u$ and (ii) the market with control $\theta$.
    \end{remark}

    Denote $\vartheta:=(\pi_1,\pi_2,\theta_1,\theta_2)$ and define the generator $\Lc_0^{\vartheta}(\phi)$ for some $\mathrm{C}^{2,2}$ function $\phi$ by 
	\begin{equation}\label{eq:generator}
		\begin{aligned}
			\Lc_0^{\vartheta}(\phi)&=\sum_{i=1}^2\left[\left[\mu_i\pi_i-\sigma_i\theta_i\pi_i\right]\frac{\partial \phi}{\partial x_i}+\frac{1}{2}\sigma_i^2\pi_i^2\frac{\partial^2 \phi}{\partial x_i^2}\right]+\rho\sigma_1\sigma_2\pi_1\pi_2\frac{\partial^2 \phi}{\partial x_1\partial x_2}-\delta \phi.
		\end{aligned}
	\end{equation}
    Write $\beta_i:=\frac{\widetilde \beta_i}{a_i}$ for $i=1,2$. The associated HJB equation is given by
		\begin{equation}\label{eqn:hjborig}
        \begin{aligned}
			&\sup\Bigg\{\sup_{\pi_i\in[0,1]}\inf_{\theta_i\in\mathbb R}\left[\Lc_0^{\vartheta}(V)(x_1,x_2)+V(x_1,x_2)\sum_{i=1}^2\frac{\theta_i^2}{2\beta_i}\right], a_1-\frac{\partial V}{\partial x_1}(x_1,x_2), a_2-\frac{\partial V}{\partial x_2}(x_1,x_2),\\
            &\qquad\qquad\frac{\partial V}{\partial x_1}(x_1,x_2)-\frac{\partial V}{\partial x_2}(x_1,x_2),  \frac{\partial V}{\partial x_2}(x_1,x_2)-\frac{\partial V}{\partial x_1}(x_1,x_2)\Bigg\} = 0,
            \end{aligned}
		\end{equation}
		with boundary condition $V(0,0)=0$.

Since $\frac{\partial V}{\partial x_1}-\frac{\partial V}{\partial x_2} \le 0$ and  $\frac{\partial V}{\partial x_2}-\frac{\partial V}{\partial x_1} \le 0$ hold simultaneously by \eqref{eqn:hjborig}, it follows that $\frac{\partial V}{\partial x_1} = \frac{\partial V}{\partial x_2}$. As such, there exists a univariate function, $g: x \in \Rb_+ \mapsto  \Rb$, such that 
\begin{align}
    g(x) = V(x_1, x_2), \quad \text{with } x := x_1 + x_2 \ge 0.
\end{align}
Using this representation of $g$, we have 
\begin{align*}
    g'(x) = \frac{\partial V}{\partial x_i}(x_1, x_2) \quad \text{and} \quad 
    g''(x) = \frac{\partial^2 V}{\partial x_i \partial x_j}(x_1, x_2), \quad i,j = 1, 2.
\end{align*}
Define the one-dimensional generator $\Lc^{\vartheta}(\varphi)(x):=\Lc^{\vartheta}_0(\phi)(x_1,x_2)$, where $\varphi(x)=\phi(x_1,x_2)$ with $x:=x_1+x_2$ for some $C^2$ function $\varphi$. We can then rewrite \eqref{eqn:hjborig} as
\begin{equation}\label{eqn:hjb rewritten}
        \begin{aligned}
			&\sup\Bigg\{\sup_{\pi_i\in[0,1]}\inf_{\theta_i\in\mathbb R}\left[\Lc^{\vartheta}(g)(x)+g(x)\sum_{i=1}^2\frac{\theta_i^2}{2\beta_i}\right], a_1-g'(x), a_2-g'(x)\Bigg\} = 0,
            \end{aligned}
\end{equation}
with boundary condition $g(0)=0$.
Based on the structure of the HJB equations \eqref{eqn:hjborig} and \eqref{eqn:hjb rewritten}, we conjecture that the optimal dividend-capital injection strategy follows a barrier strategy similar to that in \cite{gu2018}. We give the definition of barrier strategies and then provide the verification theorem below.

\begin{definition}\label{definition of barrier}
    Let $(C_{1,b},C_{2,b},L_{1,b},L_{2,b}):=(\{C_{1,b}(t)\},\{C_{2,b}(t)\},\{L_{1,b}(t)\},\{L_{2,b}(t)\})_{t\geq 0}$ denote the barrier strategy with a barrier $b$. For this strategy, we partition the domain of the reserve level pair $(x_1,x_2)\in\mathbb R_+^2$ into three regions (see Figure \ref{fig:capitaltransfer unbdd}). The three regions $A_i$, $i=1,2,3$, are defined as follows:
	\begin{itemize}
		\item $A_1=\{(x_1,x_2):x_2\geq 0,\, x_1>b\},$
		\item $A_2=\{(x_1,x_2):x_1\in[0,b],\, x_2>0,\, x_1+x_2>b\},$
		\item $A_3=\{(x_1,x_2):x_1\geq 0, \,x_2\geq 0,\, x_1+x_2\leq b\}.$
	\end{itemize}
    
    \begin{figure}
		\centering
		\begin{tikzpicture}[scale=0.8] 
			\draw[->] (-1,0) -- (4,0) node[right] {\(x_1\)};
			\draw[->] (0,-1) -- (0,4) node[above] {\(x_2\)};
			
			\draw (0,3) node[left] {\((0,b)\)} -- (3,0) node[below] {\((b,0)\)};
			\draw (3,0) -- (3,4) node[right] {}; 
			
			\filldraw[black] (0,3) circle (2pt); 
			\filldraw[black] (3,0) circle (2pt);
			
			\node at (3.5, 2) {$A_1$};
			\node at (2, 2) {$A_2$};
			\node at (0.75, 1) {$A_3$};

			\node[rotate=-45] at (1.5, 1.3) {\footnotesize\(x_1+x_2 = b\)};
			\node[rotate=-90] at (3.15, 3.45) {\footnotesize\(x_1 = b\)};
		\end{tikzpicture}
		\caption{Regions for dividend payout and capital injection decisions.}
		\label{fig:capitaltransfer unbdd}
	\end{figure}
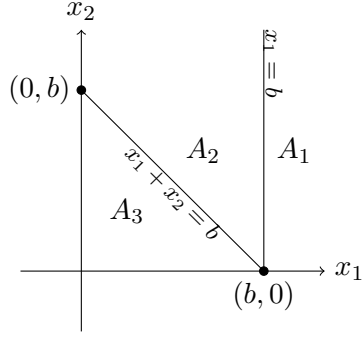
    The barrier strategy is then as follows:
    \begin{enumerate}[label=(\roman*)]
        \item If $(x_1,x_2)\in A_1$, Line 1 transfers an amount $x_1-b$ to Line 2 and we proceed to (ii). 
        \item If $(x_1,x_2)\in A_2$, Line 2 directly pays an amount $x_1+x_2-b$ as dividends and we proceed to (iii).
        \item If $(x_1,x_2)\in A_3$, Line 2 keeps the aggregate reserve reflected at $b$ whenever the aggregate reserve level reaches $b$ from below; that is, we stay in region $A_3$ until ruin. When the reserve pair reaches either axis, the insurer applies capital transfers from the other line to keep both reserve levels positive. The problem ends when the reserve pair reaches $(0,0)$.
    \end{enumerate}
\end{definition}

\subsection{Verification Theorem}

In traditional dividend problems where there is no model uncertainty, the techniques used to prove the corresponding verification theorem may not be applicable when model uncertainty is present. Since the optimization problem \eqref{eqn: value function} can be seen as a zero-sum stochastic differential game, different techniques are required to establish and prove its verification theorem. We now proceed to present the verification theorem.

\begin{theorem}[Verification Theorem]\label{thm: verification}
Let $X^{u}(t):=X^{u}_1(t)+X^{u}_2(t)$. Suppose there exist: 
\begin{enumerate}[label=(\roman*)]
    \item an increasing concave function $w(x) \in \mathrm C^2$ with $w'(x) > a_2$ for $x \in [0,b^*)$, $w'(x)=a_2$ for $x \in [b^*,\infty)$, $w(0)=0$, 
    \item a Markov control $u^*(X^{u^*}(t))=(\pi_1^*(X^{u^*}(t)),\pi_2^*(X^{u^*}(t)),C_{1,b^*},C_{2,b^*},L_{1,b^*},L_{2,b^*})\in\Uc$, where the dividend-capital injection strategy $(C_{1,b^*}, C_{2,b^*}, L_{1,b^*}, L_{2,b^*})$ is a barrier strategy, and 
    \item a Markov process $\theta^*:=(\{\theta_1^*(X^{u^*}(t))\}_{t\geq 0},\{\theta_2^*(X^{u^*}(t))\}_{t\geq 0})\in\Theta$,
\end{enumerate}
such that
\begin{align}
    &\Lc^{(\pi_1,\pi_2,\theta_1^*(x),\theta_2^*(x))}(w)(x)+\sum_{i=1}^2\frac{(\theta_i^*(x))^2}{2\beta_i}w(x) \leq 0, \quad \mbox{for all $\pi_1,\pi_2\in[0,1]$ and $x\geq 0$,}\label{proof eqn 9}\\
    &\Lc^{(\pi^*_1(x),\pi^*_2(x),\theta_1,\theta_2)}(w)(x)+\sum_{i=1}^2\frac{\theta_i^2}{2\beta_i}w(x) \geq 0, \quad \mbox{for all $\theta_1,\theta_2\in \mathbb R$ and $x\in [0,b^*]$,}\label{proof eqn 10}\\
    &\Lc^{(\pi^*_1(x),\pi^*_2(x),\theta_1^*(x),\theta_2^*(x))}(w)(x)+\sum_{i=1}^2\frac{(\theta_i^*(x))^2}{2\beta_i}w(x) = 0, \quad \mbox{for all $x\in [0,b^*]$.} \label{proof eqn 11}
\end{align}
Then,
\begin{align}
    w(x)
    &=\sup_{u\in\mathcal U} \inf_{\theta\in\Theta} \mathbb E^{\mathbb Q^{\theta}}\left[\sum_{i=1}^2a_i\left(\int_0^{\tau^u}e^{-\delta t}\dd C_i(t)+\int_0^{\tau^u}e^{-\delta t}\frac{\theta_i^2(t)w(X^{u}(t))}{2\widetilde\beta_i}\dd t\right)\right] \\
    &=\inf_{\theta\in\Theta}\sup_{u\in\mathcal U}  \mathbb E^{\mathbb Q^{\theta}}\left[\sum_{i=1}^2a_i\left(\int_0^{\tau^u}e^{-\delta t}\dd C_i(t)+\int_0^{\tau^u}e^{-\delta t}\frac{\theta_i^2(t)w(X^{u}(t))}{2\widetilde\beta_i}\dd t\right)\right] \\
    &=\mathbb E^{\mathbb Q^{\theta^*}}\left[\sum_{i=1}^2a_i\left(\int_0^{\tau^{u^*}}e^{-\delta t}\dd C_{i,b^*}(t)+\int_0^{\tau^{u^*}}e^{-\delta t}\frac{(\theta^*_i(X^{u^*}(t)))^2w(X^{u^*}(t))}{2\widetilde\beta_i}\dd t\right)\right].
\end{align}
Moreover, $u^*$ and $\theta^*$ are the optimal strategies; that is, $(u^*,\theta^*)$ is the Nash equilibrium of the zero-sum stochastic differential game.
\end{theorem}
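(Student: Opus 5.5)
The plan is the standard saddle-point verification argument for zero-sum games: establish two one-sided inequalities and sandwich. Denote by $J(x;u,\theta)$ the expected discounted payoff inside the brackets, with $w$ in the penalty term. Set $\widetilde\beta_i=a_i\beta_i$, so that $a_i\theta_i^2/(2\widetilde\beta_i)=\theta_i^2/(2\beta_i)$. We will show:
\begin{enumerate}[label=(\alph*)]
\item $J(x;u,\theta^*)\le w(x)$ for every $u\in\Uc$;
\item $J(x;u^*,\theta)\ge w(x)$ for every $\theta\in\Theta$;
\item $J(x;u^*,\theta^*)=w(x)$.
\end{enumerate}
Given these, $\inf_\theta\sup_u J\le \sup_u J(\cdot;u,\theta^*)\le w\le \inf_\theta J(\cdot;u^*,\theta)\le \sup_u\inf_\theta J$. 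Since always $\sup\inf\le\inf\sup$, all quantities coincide with $w$ and $(u^*,\theta^*)$ is a Nash equilibrium.

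\textbf{Step (a).} Fix $u$ and work under $\mathbb Q^{\theta^*}$, where $X^u=X_1^u+X_2^u$ satisfies
\[
\dd X=\textstyle\sum_i(\mu_i-\sigma_i\theta_i^*)\pi_i\,\dd t-\sum_i\sigma_i\pi_i\,\dd W_i^{\mathbb Q^{\theta^*}}-\dd C_1-\dd C_2 .
\]
The transfers $L_1,L_2$ cancel in the aggregate. Apply the Itô--Meyer formula for semimartingales with jumps to $e^{-\delta t}w(X(t))$ up to $\tau_n:=\tau^u\wedge n\wedge T_n$, where $T_n$ localises the stochastic integral. This gives
\begin{align*}
e^{-\delta\tau_n}w(X(\tau_n)) &= w(x)+\int_0^{\tau_n}e^{-\delta t}\Lc^{(\pi,\theta^*)}(w)(X(t))\,\dd t - \int_0^{\tau_n}e^{-\delta t}w'(X(t-))\,\dd C^c(t)\\
&\quad+\sum_{t\le\tau_n}e^{-\delta t}\bigl[w(X(t))-w(X(t-))\bigr] + M(\tau_n).
\end{align*}
Here $C=C_1+C_2$. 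By concavity and $w'\ge a_2$, together with the standing convention $a_2\ge a_1$ (Line 2 is the dividend-paying line), each jump term is at most $-a_2\Delta C\le -\sum_i a_i\Delta C_i$. Similarly $-w'\,\dd C^c\le-\sum_i a_i\,\dd C_i^c$. Inequality \eqref{proof eqn 9} bounds the drift by $-\sum_i\frac{(\theta_i^*)^2}{2\beta_i}w$. Take $\mathbb Q^{\theta^*}$-expectations and use $w\ge0$. Then let $n\to\infty$ with monotone convergence for the dividend and penalty terms; $w\ge0$ allows the terminal term to be dropped. This yields $J(x;u,\theta^*)\le w(x)$.

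\textbf{Steps (b) and (c).} Under $u^*$ the aggregate reserve stays in $[0,b^*]$ after the possible initial lump payment $(x-b^*)^+$. On $(b^*,\infty)$ we have $w'=a_2$, so $w(x)=w(b^*)+a_2(x-b^*)$ and the lump payment is paid exactly at value. Thereafter $C_{2,b^*}$ is the local time at $b^*$, where $w'(b^*)=a_2$, so $\int e^{-\delta t}w'\,\dd C_{2,b^*}=a_2\int e^{-\delta t}\dd C_{2,b^*}$ exactly, and $C_{1,b^*}\equiv0$. Inequality \eqref{proof eqn 10}, valid on $[0,b^*]$, now gives $\ge$ in place of $\le$ in the same Itô identity for any $\theta$, and \eqref{proof eqn 11} gives equality for $\theta^*$. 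Ruin of the first line is prevented by transfers until $X=0$, where $w(0)=0$, so the terminal term becomes $e^{-\delta\tau_n}w(X(\tau_n))$ with $X(\tau_n)\in[0,b^*]$.

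\textbf{Main obstacle.} The key difficulty is passing to the limit in (b) for an \emph{arbitrary} $\theta\in\Theta$. The measure changes with $\theta$, and the penalty integrand involves $\theta^2$, which is not a priori integrable. Since $w$ is bounded on $[0,b^*]$, the terminal term $\mathbb E^{\mathbb Q^\theta}[e^{-\delta\tau_n}w(X(\tau_n))]\le w(b^*)e^{-\delta n}+\mathbb E^{\mathbb Q^\theta}[e^{-\delta\tau^{u^*}}w(X(\tau^{u^*}))\mathbf 1_{\tau^{u^*}\le n}]$ should vanish, because $w(X(\tau^{u^*}))=0$. The localisation $T_n$ must be removed carefully. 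I would use that the stochastic integrand $e^{-\delta t}w'(X)\sigma_i\pi_i^*$ is bounded, so the martingale part is a true $\mathbb Q^\theta$-martingale on $[0,n]$ by Girsanov and Novikov (condition (iii) of admissibility). Monotone convergence then handles the nonnegative penalty and dividend terms, provided $\mathbb Q^\theta$ is defined consistently on $\mathcal F_{\tau}$ via restriction on each $[0,n]$.
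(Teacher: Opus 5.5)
Your proposal is correct and follows essentially the same route as the paper. The paper's Parts I, II and III are exactly your steps (a), (b) and (c): Itô applied to $e^{-\delta t}w(X^u(t))$, jump and singular dividend terms bounded via $w'\ge a_2\ge a_1$, \eqref{proof eqn 9} under $\theta^*$ for arbitrary $u$, and \eqref{proof eqn 10}--\eqref{proof eqn 11} on $[0,b^*]$ under $u^*$ with the extension $w(x)=w(b^*)+a_2(x-b^*)$ beyond the barrier, followed by the same sandwich using $\sup\inf\le\inf\sup$. Your explicit localization and your bound $w(b^*)e^{-\delta n}$ on the terminal term are slightly more careful than the paper's direct true-martingale claim and its appeal to dominated convergence.
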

\begin{proof}
    See Appendix \ref{app: verification proof}.
\end{proof}
Using the verification theorem, if we can find a classical solution to the HJB equation \eqref{eqn:hjborig}, then the solution is equal to the value function in \eqref{eqn: value function}. In the next section, we identify such classical solutions.

\section{Analytical Solutions}
\label{sec:AnalyticalSol}

In this section, we study the insurer's optimization problem in \eqref{eqn: value function} and obtain the optimal strategies $u^*$ and $\theta^*$, and the value function $V(x_1,x_2)=g(x)$. Due to symmetry between the two lines, we assume, without loss of generality, that $a_1\leq \frac{1}{2}$ for the rest of the paper (i.e., Line 2 is the more important line). 

First, we isolate the optimization over $\theta_i$ (distortion decision) in \eqref{eqn:hjb rewritten} and solve
\begin{equation}
    \inf_{\theta_1\in\mathbb{R},\theta_2\in\mathbb{R}} \left(\frac{\theta_1^2}{2\beta_1}+\frac{\theta_2^2}{2\beta_2}\right)g(x) - \left(\sigma_1\pi_1\theta_1+\sigma_2\pi_2\theta_2\right)g'(x),
\end{equation}
from which we obtain the candidate minimizers as
\begin{equation}\label{eqn:theta candidates}
    \widehat\theta_1 (x) = \frac{\beta_1\sigma_1\pi_1 g'(x)}{g(x)} \quad \mbox{and} \quad \widehat\theta_2 (x) = \frac{\beta_2\sigma_2\pi_2 g'(x)}{g(x)}.
\end{equation}

Using \eqref{eqn:theta candidates}, we can then solve the optimization over $\pi_i$ (reinsurance decision) in \eqref{eqn:hjb rewritten}:
\begin{align*}
        & \sup_{\pi_1\in[0,1],\pi_2\in[0,1]} \left\{\left(\frac{1}{2}\sigma_1^2\pi_1^2+\frac{1}{2}\sigma_2^2\pi_2^2+\rho\sigma_1\sigma_2\pi_1\pi_2\right)g''(x) + \left(\mu_1\pi_1+\mu_2\pi_2\right)g'(x) 
        -\frac{1}{2}\left(\beta_1\sigma_1^2\pi_1^2+\beta_2\sigma_2^2\pi_2^2\right) \frac{g'(x)^2}{g(x)} \right\},
\end{align*}
and, ignoring the constraints over $[0,1]$, we obtain the following candidate maximizers:

\begin{equation}\label{eqn: optimal pi}
\begin{aligned}
    \widehat \pi_1(x)&=\frac{(\mu_1\sigma_2-\rho\mu_2\sigma_1)g''(x)g'(x)g(x)^2-\beta_2\mu_1\sigma_2g'(x)^3g(x)}{\sigma_1^2\sigma_2\left[(\beta_1+\beta_2)g''(x)g'(x)^2g(x)-(1-\rho^2)g''(x)^2g(x)^2-\beta_1\beta_2g'(x)^4\right]},\\
    \widehat \pi_2(x)&=\frac{(\mu_2\sigma_1-\rho\mu_1\sigma_2)g''(x)g'(x)g(x)^2-\beta_1\mu_2\sigma_1g'(x)^3g(x)}{\sigma_1\sigma_2^2\left[(\beta_1+\beta_2)g''(x)g'(x)^2g(x)-(1-\rho^2)g''(x)^2g(x)^2-\beta_1\beta_2g'(x)^4\right]}.
\end{aligned}
\end{equation}

The analysis of the optimal strategies below is divided into cases involving the relative Sharpe ratio $\frac{\mu_1 / \sigma_1}{\mu_2 / \sigma_2}$, the correlation $\rho$, and the ambiguity-aversion parameters $\beta_1$ and $\beta_2$.

\subsection{The Case of $0<\frac{\rho}{1+\beta_2\frac{\gamma_1}{1-\gamma_1}}\leq \frac{\mu_1 /\sigma_1}{\mu_2/\sigma_2}\leq \frac{1+\beta_1\frac{\gamma_1}{1-\gamma_1}}{\rho}$}\label{subsec: main case}

In this section, we consider the following condition:
\begin{equation}\label{eqn: correlation bounds}
    0<\frac{\rho}{1+\beta_2\frac{\gamma_1}{1-\gamma_1}}\leq \frac{\mu_1 /\sigma_1}{\mu_2/\sigma_2}\leq \frac{1+\beta_1\frac{\gamma_1}{1-\gamma_1}}{\rho},
\end{equation}
where $\gamma_1$ is a solution to $\psi(z)=0$ on $(0,1)$ and $\psi$ is defined as 
\begin{equation}\label{eqn: defn of psi}
    \begin{aligned}
        \psi(z)&:=\widetilde A(z) - 2\delta \sigma_1^2\sigma_2^2 \widetilde B(z),\\
        \widetilde A(z)
    &:=[\mu_1^2\sigma_2^2(3\rho+1)(\rho-1)+\mu_2^2\sigma_1^2(3\rho+1)(\rho-1)+2\mu_1\mu_2\sigma_1\sigma_2(1-\rho)(2\rho^2+\rho+1)]z(z-1)^3\\
    &\qquad+[-\beta_2\mu_1^2\sigma_2^2(2\beta_1+\beta_2)-\beta_1\mu_2^2\sigma_1^2(\beta_1+2\beta_2)+2\beta_1\beta_2\mu_1\mu_2\sigma_1\sigma_2]z^3(z-1)\\
    &\qquad+[\mu_1^2\sigma_2^2(\beta_1-3\rho^2\beta_2+2\rho\beta_2+2\beta_2)+\mu_2^2\sigma_1^2(\beta_2-3\rho^2\beta_1+2\rho\beta_1+2\beta_1)\\
    &\qquad\qquad-2\mu_1\mu_2\sigma_1\sigma_2(\beta_1+\beta_2)]z^2(z-1)^2+\left[\beta_1\beta_2^2\mu_1^2\sigma_2^2+\beta_1^2\beta_2\mu_2^2\sigma_1^2\right]z^4,\\
    \widetilde B(z)
    &:=[(\beta_1+\beta_2)^2+2(1-\rho^2)\beta_1\beta_2]z^2(z-1)^2+(1-\rho^2)^2(z-1)^4+\beta_1^2\beta_2^2z^4\\
    &\qquad-2(1-\rho^2)(\beta_1+\beta_2)z(z-1)^3-2\beta_1\beta_2(\beta_1+\beta_2)z^3(z-1).
    \end{aligned}
\end{equation}
We introduce the following notations that will be used frequently in the analysis:
\begin{equation}\label{eqn: w1 and w2}
\begin{aligned}
    w_1&:=\frac{\sigma_1^2\sigma_2[(\beta_1+\beta_2)\gamma_1(\gamma_1-1)-(1-\rho^2)(\gamma_1-1)^2-\beta_1\beta_2\gamma_1^2]}{(\mu_1\sigma_2-\rho\mu_2\sigma_1)(\gamma_1-1)-\beta_2\mu_1\sigma_2\gamma_1} ,\\
    w_2&:=\frac{\sigma_1\sigma_2^2[(\beta_1+\beta_2)\gamma_1(\gamma_1-1)-(1-\rho^2)(\gamma_1-1)^2-\beta_1\beta_2\gamma_1^2]}{(\mu_2\sigma_1-\rho\mu_1\sigma_2)(\gamma_1-1)-\beta_1\mu_2\sigma_1\gamma_1} .
\end{aligned}
\end{equation}

\begin{remark}
    Condition \eqref{eqn: correlation bounds} guarantees that $w_1 >0 $ and $w_2 >0$.
\end{remark}

We also introduce the following terms:
\begin{equation}\label{eqn: N1, N2, N3}
        N_1 :=\frac{1}{2}\sigma_1^2\frac{w_0^2}{w_1^2}+\frac{1}{2}\sigma_2^2\frac{w_0^2}{w_2^2}+\rho\sigma_1\sigma_2\frac{w_0^2}{w_1w_2}, \qquad
        N_2 :=\mu_1\frac{w_0}{w_1}+\mu_2\frac{w_0}{w_2}, \qquad
        N_3 :=\frac{1}{2}\beta_1\sigma_1^2\frac{w_0^2}{w_1^2}+\frac{1}{2}\beta_2\sigma_2^2\frac{w_0^2}{w_2^2},
\end{equation}
where $w_0$ satisfies $w_0:=\inf\left\{x>0:\max\left\{\widehat \pi_1(x),\widehat\pi_2(x)\right\}=1\right\}.$
We can interpret $w_0$ as the threshold that signals the insurer to engage in zero reinsurance for at least one of the lines. We then have the following scenarios: (i) if $\widehat \pi_1(w_0)< 1= \widehat\pi_2(w_0)$, then Line 2 engages in zero reinsurance; (ii) if $\widehat \pi_2(w_0)< 1= \widehat\pi_1(w_0)$, then Line 1 engages in zero reinsurance; and (iii) if $\widehat \pi_1(w_0)=1= \widehat\pi_2(w_0)$, then both lines engage in zero reinsurance.  We also define $b^*$ by
\begin{equation}\label{eqn: defn of b*}
    b^*:=\inf \{u: g'(u)=a_2\}.
\end{equation}
We have not yet addressed the existence of $\gamma_1$, defined as a solution to $\psi(z)=0$. The following lemma gives a necessary and sufficient condition such that $\gamma_1\in(0,1)$ exists.
\begin{lemma}\label{lemma: gamma1 suff cond}
A solution to the equation $\psi(z)=0$, where $\psi$ is given by \eqref{eqn: defn of psi}, exists on $(0,1)$ if and only if 
\begin{equation}
   \delta < \frac{\mu_1^2}{2\beta_1\sigma_1^2} + \frac{\mu_2^2}{2\beta_2\sigma_2^2}. 
\end{equation}
\end{lemma}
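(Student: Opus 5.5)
The plan is to locate the sign of $\psi$ at the two endpoints of $[0,1]$ and then show that $\psi$ changes sign at most once on $(0,1)$. For this I will rewrite $\psi(z)=0$ as $F(y)=2\delta\sigma_1^2\sigma_2^2$ for a function $F$ that is strictly increasing in a new variable $y$.

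\emph{Endpoints (sufficiency).} Every term of $\widetilde A$ carries a factor $z$, so $\widetilde A(0)=0$. Also $\widetilde B(0)=(1-\rho^2)^2$, which gives
\[
\psi(0)=-2\delta\sigma_1^2\sigma_2^2(1-\rho^2)^2<0 .
\]
At $z=1$ only the $z^4$ terms survive:
\[
\psi(1)=\beta_1\beta_2\big(\beta_2\mu_1^2\sigma_2^2+\beta_1\mu_2^2\sigma_1^2\big)-2\delta\sigma_1^2\sigma_2^2\beta_1^2\beta_2^2 .
\]
Dividing by $2\beta_1^2\beta_2^2\sigma_1^2\sigma_2^2>0$ shows that $\psi(1)>0$ if and only if $\delta<\frac{\mu_1^2}{2\beta_1\sigma_1^2}+\frac{\mu_2^2}{2\beta_2\sigma_2^2}$. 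Under this condition, the intermediate value theorem yields a root $\gamma_1\in(0,1)$.

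\emph{Change of variables.} For necessity I need that $\psi$ has no zero in $(0,1)$ when $\psi(1)\le 0$. Set $y:=z/(1-z)\in(0,\infty)$ and divide by $(1-z)^4>0$. Expanding the square shows
\[
\widetilde B(z)=(1-z)^4D(y)^2,\qquad D(y):=\beta_1\beta_2y^2+(\beta_1+\beta_2)y+(1-\rho^2)>0 :
\]
the coefficients $(\beta_1+\beta_2)^2+2(1-\rho^2)\beta_1\beta_2$, $2(1-\rho^2)(\beta_1+\beta_2)$ and $2\beta_1\beta_2(\beta_1+\beta_2)$ match, with the signs absorbed by $z-1=-(1-z)$. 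Likewise $\widetilde A(z)=(1-z)^4P(y)$ for a quartic $P$ with $P(0)=0$. Hence, on $(0,1)$, $\psi(z)=0$ if and only if
\[
F(y):=\frac{P(y)}{D(y)^2}=2\delta\sigma_1^2\sigma_2^2 .
\]
Here $F(0)=0$ and $F(y)\to \sigma_1^2\sigma_2^2\big(\frac{\mu_1^2}{\beta_1\sigma_1^2}+\frac{\mu_2^2}{\beta_2\sigma_2^2}\big)$ as $y\to\infty$, which reproduces the endpoint computation above.

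\emph{Main obstacle: monotonicity of $F$.} If $F$ is strictly increasing on $(0,\infty)$, then a solution exists exactly when $2\delta\sigma_1^2\sigma_2^2<\lim_{y\to\infty}F(y)$, which is the stated condition. This gives both directions. I would prove monotonicity by identifying $F$, up to the positive constant $\sigma_1^2\sigma_2^2$, with an optimized quadratic form. Concretely, with $\Sigma$ the covariance matrix of $(\sigma_1W_1,\sigma_2W_2)$, the candidate is
\[
F(y)=\sigma_1^2\sigma_2^2\,\mu^\top M(y)^{-1}\mu,\qquad M(y):=\tfrac1y\Sigma+\mathrm{diag}(\beta_1\sigma_1^2,\beta_2\sigma_2^2).
\]
This equals $2\sup_\pi\{\mu\cdot\pi-\tfrac12\pi^\top M(y)\pi\}$, which is what one obtains when substituting \eqref{eqn: optimal pi} into the HJB equation with the ratio $g''g/(g')^2$ expressed through $y$. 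Note that $\det M(y)=\sigma_1^2\sigma_2^2D(y)/y^2$, consistent with the denominator $D^2$. I expect the verification of the identity $P/D^2=\sigma_1^2\sigma_2^2\mu^\top M^{-1}\mu$ to be the laborious step. I would check it coefficient by coefficient, or by computing $\mu^\top \operatorname{adj}(M)\mu\cdot \det M$ symbolically.

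Once the identity holds, $y\mapsto M(y)$ is strictly decreasing in the Loewner order because $\Sigma$ is positive definite ($|\rho|<1$). Therefore $M(y)^{-1}$ is strictly increasing, and so is $F$, since $\mu\neq0$. Continuity of $F$ then completes both directions.
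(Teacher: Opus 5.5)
\textbf{The ``only if'' direction is not established.} Your sufficiency argument (the signs of $\psi(0)$ and $\psi(1)$ plus the intermediate value theorem) is exactly the paper's. The gap is in the step you deferred: the identity $P(y)/D(y)^2=\sigma_1^2\sigma_2^2\,\mu^\top M(y)^{-1}\mu$ does not hold for the $\widetilde A$ displayed in \eqref{eqn: defn of psi}.

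Inverting $M(y)$ gives $\sigma_1^2\sigma_2^2\,\mu^\top M(y)^{-1}\mu=yQ(y)/D(y)$, where $Q(y)=(\beta_2\mu_1^2\sigma_2^2+\beta_1\mu_2^2\sigma_1^2)y+\mu_1^2\sigma_2^2-2\rho\mu_1\mu_2\sigma_1\sigma_2+\mu_2^2\sigma_1^2$. So you would need $P=yQD$. The leading coefficients agree, but the others do not:
\begin{itemize}
\item the $y^3$ coefficient of $P$ contains $-2\beta_1\beta_2\mu_1\mu_2\sigma_1\sigma_2$, whereas $yQD$ has $-2\rho\beta_1\beta_2\mu_1\mu_2\sigma_1\sigma_2$;
\item the linear coefficients differ by $2(1-\rho)(\rho\mu_1\sigma_2-\mu_2\sigma_1)(\mu_1\sigma_2-\rho\mu_2\sigma_1)$.
\end{itemize}
For example, take $\rho=0$ and $\beta_i=\sigma_i=\mu_i=1$. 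The displayed $\widetilde A$ gives $F(y)=2y^2/(1+y)^2$, while $\sigma_1^2\sigma_2^2\,\mu^\top M(y)^{-1}\mu=2y/(1+y)$. Hence the Loewner-order monotonicity says nothing about the $\psi$ in the statement.

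Your identity is the correct one for what \eqref{eqn: optimal pi} actually produces when substituted into the HJB equation. The mismatch comes from the displayed $\widetilde A$ itself: at $\beta_1=\beta_2=0$ it does not reproduce \eqref{eqn: gamma1 no mod.unc.}. So your argument is a valid proof for the HJB-consistent polynomial, and it even gives uniqueness of the root. It is not a proof of the lemma as written.

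\textbf{How the paper handles necessity.} The paper works directly with the displayed coefficients. From $\delta\ge\frac{\mu_1^2}{2\beta_1\sigma_1^2}+\frac{\mu_2^2}{2\beta_2\sigma_2^2}$ it derives $\mu_i^2<2\delta\beta_i\sigma_i^2$. It then writes $\psi=h_1z^4+h_2z^3(z-1)+h_3(z-1)^4+h_4z(z-1)^3+h_5z^2(z-1)^2$ and checks that $h_1\le0$, $h_2>0$, $h_3<0$, $h_4>0$ and $h_5<0$. Each term is therefore negative on $(0,1)$. In your variable $y$, this says every coefficient of $P(y)-2\delta\sigma_1^2\sigma_2^2D(y)^2$ is nonpositive and the constant term is strictly negative. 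No monotonicity of $F$ is needed, and an argument of this kind is what your proposal is missing.
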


\begin{remark}
    It should be noted that the solution of $\psi(z)=0$, if it exists on $(0,1)$, is not necessarily unique. This does not pose a problem in the analysis since the value function is also not necessarily unique.
    
    Furthermore, Lemma \ref{lemma: gamma1 suff cond} links the existence of a solution of $\psi(z) = 0$ in $(0,1)$ to the ambiguity-aversion parameters. As such, this implies that the optimal strategies are determined in part by the ambiguity-aversion parameters.
\end{remark}

\begin{theorem}\label{theorem 1}
    Suppose $\gamma_1\in(0,1)$ exists, \eqref{eqn: correlation bounds} holds, and $N_1\neq N_3$. 
    We have the following results:
    \begin{enumerate}[label=(\roman*)]
        \item $w_0=\min\{w_1,w_2\}$ and $b^*$ defined in \eqref{eqn: defn of b*} satisfies
        \begin{equation}\label{eqn: b^* theorem 1}
            b^*=\inf\{x>w_0 : v(x)^2+v'(x)=0  \},
        \end{equation}
        where
        \begin{equation}\label{eqn: v(x), K3, gamma2pm thm 1}
            \begin{aligned}
            v(x)&=\frac{N_1}{N_1-N_3}\cdot\frac{K_3\gamma_{2+}e^{\gamma_{2+}x}+\gamma_{2-}e^{\gamma_{2-}x}}{K_3e^{\gamma_{2+}x}+e^{\gamma_{2-}x}},\\
            K_3&=\frac{\left(1-\frac{N_3}{N_1}\right)\frac{\gamma_1}{w_0}-\gamma_{2-}}{\gamma_{2+}-\left(1-\frac{N_3}{N_1}\right)\frac{\gamma_1}{w_0}}e^{(\gamma_{2-}-\gamma_{2+})w_0},\\
            \gamma_{2\pm}&=\frac{-N_2\pm \sqrt{N_2^2+4\delta(N_1-N_3)}}{2N_1}.
            \end{aligned}
        \end{equation}
        \item The function $g$ defined by 
    \begin{equation}\label{eqn: g thm 1}
    g(x)=
    \begin{cases}
    \frac{a_2}{v(b^*)}\left[\frac{K_3e^{\gamma_{2+}w_0}+e^{\gamma_{2-}w_0}}{K_3e^{\gamma_{2+}b^*}+e^{\gamma_{2-}b^*}}\right]^{\frac{N_1}{N_1-N_3}}\left(\frac{x}{w_0}\right)^{\gamma_1} & \mbox{if $x<w_0$,}\\
    \frac{a_2}{v(b^*)} \left[\frac{K_3e^{\gamma_{2+}x}+e^{\gamma_{2-}x}}{K_3e^{\gamma_{2+}b^*}+e^{\gamma_{2-}b^*}}\right]^{\frac{N_1}{N_1-N_3}} & \mbox{if $w_0\leq x<b^*$,}\\
    a_2\left(x-b^*+\frac{1}{v(b^*)}\right) & \mbox{if $x\geq b^*$,}
    \end{cases}
\end{equation}
is a classical solution to the HJB equation in \eqref{eqn:hjborig} and thus is equal to the value function $V$ of the optimization problem in \eqref{eqn: value function}. Moreover, $g$ is increasing and concave as hypothesized. 
    \item The optimal reinsurance and distortion strategy is a feedback strategy of the form
    $(\pi_1^*,\pi_2^*,\theta_1^*,\theta_2^*)(t)= (\bar\pi_1,\bar\pi_2,\bar\theta_1,\bar\theta_2)(X^{u^*}(t))$ where
    \begin{equation}\label{eqn: pi-theta thm 1}
        (\bar\pi_1,\bar\pi_2,\bar\theta_1,\bar\theta_2)(x) =
        \begin{cases}
        \left(\frac{x}{w_1},\frac{x}{w_2},\frac{\beta_1\sigma_1\gamma_1}{w_1},\frac{\beta_2\sigma_2\gamma_1}{w_2}\right) & \mbox{if $x<w_0$,}\\
        \left(\frac{w_0}{w_1},\frac{w_0}{w_2},\frac{\beta_1\sigma_1w_0}{w_1}v(x),\frac{\beta_2\sigma_2w_0}{w_2}v(x)\right) & \mbox{if $w_0\leq x < b^*$,}\\
        \left(\frac{w_0}{w_1},\frac{w_0}{w_2},\frac{\beta_1\sigma_1w_0}{w_1(x-b^*+v(b^*)^{-1})},\frac{\beta_2\sigma_2w_0}{w_2(x-b^*+v(b^*)^{-1})}\right) & \mbox{if $x\geq  b^*$.}
    \end{cases}
    \end{equation}
    Moreover, the optimal dividend-capital injection strategy is $(C_{1,b^*},C_{2,b^*},L_{1,b^*},L_{2,b^*})$, which is a barrier strategy with a barrier $b^*$ defined in Definition \ref{definition of barrier}. 
    \end{enumerate}
\end{theorem}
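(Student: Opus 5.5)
The plan is to construct $g$ piece by piece on the three regions $[0,w_0)$, $[w_0,b^*)$ and $[b^*,\infty)$. We then glue the pieces in a $\mathrm C^2$ fashion and check the hypotheses of Theorem \ref{thm: verification}. Throughout, we work with the reduced HJB \eqref{eqn:hjb rewritten} and the candidate feedbacks \eqref{eqn:theta candidates}--\eqref{eqn: optimal pi}.

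\emph{First region, $[0,w_0)$.} We make the power ansatz $g(x)=Cx^{\gamma}$. Then $g'/g=\gamma/x$ and $g''/g'=(\gamma-1)/x$. Substituting into \eqref{eqn: optimal pi}, every term scales the same way, and we get $\widehat\pi_i(x)=x/w_i$, with $w_i$ as in \eqref{eqn: w1 and w2} evaluated at $\gamma=\gamma_1$. We also get $\widehat\theta_i=\beta_i\sigma_i\gamma_1/w_i$, which is constant. Plugging back into $\Lc^{\vartheta}(g)+g\sum\theta_i^2/(2\beta_i)=0$ and clearing denominators, the equation reduces to a polynomial identity in $\gamma$. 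We will verify by direct algebra that this identity is exactly $\psi(\gamma)=0$. Lemma \ref{lemma: gamma1 suff cond} together with the hypothesis supplies $\gamma_1\in(0,1)$, and \eqref{eqn: correlation bounds} gives $w_1,w_2>0$. Hence $\widehat\pi_i\in[0,1]$ precisely for $x\le w_i$, and this yields $w_0=\min\{w_1,w_2\}$. Moreover $g(0)=0$, and $g$ is increasing and concave since $\gamma_1\in(0,1)$.

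\emph{Second region, $[w_0,b^*)$.} Here we freeze the controls at $\pi_i=w_0/w_i$. Setting $v=g'/g$, the HJB becomes
\[
N_1 g''+N_2 g'-N_3 g'^2/g-\delta g=0.
\]
Since $g''/g=v'+v^2$, this is the Riccati equation
\[
N_1 v'+(N_1-N_3)v^2+N_2 v-\delta=0.
\]
The standard linearization $v=\frac{N_1}{N_1-N_3}h'/h$ produces the linear ODE $N_1h''+N_2h'-\delta\frac{N_1-N_3}{N_1}h=0$, which is well defined because $N_1\ne N_3$. Its characteristic roots are $\gamma_{2\pm}$, so $h=K_3e^{\gamma_{2+}x}+e^{\gamma_{2-}x}$. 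We fix $K_3$ by the $\mathrm C^1$ matching $v(w_0)=\gamma_1/w_0$, and this gives the stated formula. Integrating $v$ then gives the middle line of \eqref{eqn: g thm 1}, up to a multiplicative constant. The $\mathrm C^2$ condition is automatic: $g''/g=v'+v^2$ is determined by the ODE, both sides satisfy the same HJB at $w_0$, and the controls are continuous at $w_0$.

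\emph{Third region and gluing.} On $[b^*,\infty)$ we take $g$ linear with slope $a_2$. The barrier is chosen so that $g''(b^*-)=0$, that is $v^2+v'=0$, which is \eqref{eqn: b^* theorem 1}. The multiplicative constant is fixed by $g'(b^*)=a_2$, so that $g(b^*)=a_2/v(b^*)$. Since $g$ is linear beyond $b^*$, $g''=0$ on both sides and $g$ is $\mathrm C^2$ there. The controls in \eqref{eqn: pi-theta thm 1} are then read off region by region.

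\emph{Verification.} What remains is to check the hypotheses of Theorem \ref{thm: verification}.
\begin{enumerate}
\item[(a)] Existence of $b^*$: we must show that $g''$ stays negative on $[w_0,b^*)$ and that $v^2+v'$ does reach $0$. This is equivalent to $v$ decreasing from $\gamma_1/w_0$, or to $h^{N_1/(N_1-N_3)}$ being concave up to an inflection point. We will argue through the phase line of the Riccati equation: $v$ moves monotonically toward a root of the quadratic. In the two signs of $N_1-N_3$ we will compare $\gamma_1/w_0$ with that root and use $\gamma_1<1$ to show that $v'+v^2$ starts negative and crosses zero.
\item[(b)] $g'>a_2$ on $[0,b^*)$: this follows from concavity.
\item[(c)] The saddle inequalities \eqref{proof eqn 9}--\eqref{proof eqn 10}: for fixed $\pi$ the penalized expression is convex quadratic in $\theta$, since $g>0$; for fixed $\theta^*$ it is concave in $\pi$, since $g''\le0$. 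Hence the pointwise minimizer and maximizer suffice. In the frozen region we additionally need the KKT conditions at the boundary $\pi_i=1$, namely the sign of the partial derivative at $\pi_i=w_0/w_i$.
\item[(d)] For $x\ge b^*$ we need the HJB inequality $\le 0$, using $g''=0$ and the fact that the first-order term decreases in $x$ relative to $\delta g$.
\item[(e)] Condition $a_1-g'\le0$: this holds since $a_1\le a_2\le g'$.
\end{enumerate}

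I expect step (a) to be the main obstacle, namely showing that the inflection point $b^*$ exists, together with the sign/KKT check at $\pi=1$ in the frozen region. The identity with $\psi$ is heavy but mechanical.
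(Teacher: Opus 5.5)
Your construction is the paper's own: a power ansatz below $w_0$, frozen controls with the Riccati linearization on $[w_0,b^*)$, $\mathrm C^1$ matching at $w_0$ (with $\mathrm C^2$ automatic), and $b^*$ as the first zero of $v^2+v'$. Your phase-line argument for step (a) is sound and slightly cleaner than the paper's. The identity $v'(w_0)+v(w_0)^2=\gamma_1(\gamma_1-1)/w_0^2<0$ places $v(w_0)$ in the region where $v$ decreases monotonically to the positive root $\frac{N_1}{N_1-N_3}\gamma_{2+}$. This also gives the discriminant lemma and the non-vanishing of $K_3e^{\gamma_{2+}x}+e^{\gamma_{2-}x}$ for free.

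The genuine gap is step (c), and it is not a sign check. Say $w_1<w_2$, so the frozen control is $(1,k_2)$ with $k_2:=w_1/w_2\in(0,1)$. Let $\Phi(\pi)$ be the $\theta$-minimized Hamiltonian, i.e. the expression displayed after \eqref{eqn:theta candidates}, minus $\delta g$. Because $k_2$ is interior, optimality of the frozen control requires the equality $\partial_{\pi_2}\Phi(1,k_2)=0$ on all of $(w_0,b^*)$. Divide by $g$ and substitute your Riccati equation $N_1(v'+v^2)=N_3v^2-N_2v+\delta$. The condition becomes $F(v(x))=0$, where
\[
F(v):=\frac{c}{N_1}\bigl(N_3v^2-N_2v+\delta\bigr)+\mu_2v-\beta_2\sigma_2^2k_2v^2,\qquad c:=\sigma_2^2k_2+\rho\sigma_1\sigma_2 .
\]
$F$ is a nonzero polynomial of degree at most two: its constant term is $c\delta/N_1$, and if $c=0$ its linear coefficient is $\mu_2>0$. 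Since $v$ is strictly decreasing on $[w_0,b^*)$, $F(v(x))$ vanishes at $x=w_0$ and at most one further point. At every other point, moving $\pi_2$ slightly off $k_2$ strictly increases $\Phi$. So $\sup_{\pi}\inf_{\theta}[\cdots]>0$ there, and both the HJB equation and \eqref{proof eqn 9} fail. The ambiguity-neutral analogue with $\rho=0$ makes this transparent. There $\pi_2^*=\min\{1,-\mu_2g'/(\sigma_2^2g'')\}$, and $-g'/g''\uparrow\infty$ as $x\uparrow b^*$, so Line 2's retention must keep rising after Line 1 saturates.

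Appealing to the paper does not repair this. The paper never checks (c) at all. Its Lemma~\ref{lemma: pi remain constant} obtains constancy by writing $\bar\pi_2(x)=x/w_2$ for $x\ge\tilde w_0$. That is the power-region formula, and it is no longer valid once $g$ solves \eqref{eqn: ode 2}. So when $w_1\neq w_2$, neither your plan nor the paper's argument can establish \eqref{eqn: g thm 1} as an HJB solution or the optimality of \eqref{eqn: pi-theta thm 1}; by the computation above these claims actually fail. The frozen ansatz can only possibly survive when $w_1=w_2$, where both coordinates sit at the corner $(1,1)$ and only sign conditions remain to be checked. Otherwise you need an additional region in $(w_0,b^*)$ where the unsaturated line's retention is given by its own first-order condition. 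That changes the ODE for $g$ there, and with it $b^*$.
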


\begin{remark}\label{remark on thm 1}
    We first highlight the significance of the assumption \eqref{eqn: correlation bounds}. Together with the definition of  $w_0$, it is guaranteed that $\bar\pi_1(x) \in [0,1]$ and $\bar\pi_2(x) \in[0,1]$. In particular, this ensures that the optimal reinsurance strategies are admissible.

    Next, we discuss the optimal reinsurance strategies $(\pi_1^*,\pi_2^*)$. If $w_0=w_1$, then Line 2 cedes a proportion $1-\frac{w_0}{w_2}$ of the risk to the reinsurer when the aggregate reserve level exceeds the reinsurance threshold level $x=w_0$. If $w_0=w_2$, Line 1 cedes a proportion $1-\frac{w_0}{w_1}$ of the risk to the reinsurer.

    For the optimal distortion controls $(\theta_1^*,\theta_2^*)$, we can see that the distortion controls are constant when both lines still cede their risk to the reinsurer (i.e., $x< w_0$). From Remark \ref{remark on g' and g'' thm 1}, $v(x)$ is decreasing, which implies that $\theta^*_1$ and $\theta^*_2$ are decreasing. This means that higher aggregate reserve levels imply that the optimal alternative model becomes closer to the reference model.

    Recall that the existence of $\gamma_1\in(0,1)$ is equivalent to $\delta < \frac{\mu_1^2}{2\beta_1\sigma_1^2} + \frac{\mu_2^2}{2\beta_2\sigma_2^2}$. The conditions $\delta < \frac{\mu_1^2}{2\beta_1\sigma_1^2} + \frac{\mu_2^2}{2\beta_2\sigma_2^2}$ and $N_1\neq N_3$ are analogous to the conditions of Case 2 in \cite{feng2021}. With the assumption that \eqref{eqn: correlation bounds} holds, a sufficient condition for $N_1\neq N_3$ is that both $\beta_1\leq 1$ and $\beta_2\leq 1$ must hold.
\end{remark}

\begin{theorem}\label{theorem 2}
    Suppose $\gamma_1\in(0,1)$ exists, \eqref{eqn: correlation bounds} holds, and $N_1 =  N_3$. 
    We have the following results:
    \begin{enumerate}[label=(\roman*)]
        \item $w_0=\min\{w_1,w_2\}$ and $b^*$ defined in \eqref{eqn: defn of b*} satisfies
        \begin{equation}\label{eqn: b^* theorem 2}
            b^*=\inf\{x>w_0 : v(x)^2+v'(x)=0  \},
        \end{equation}
        where
        \begin{equation}\label{eqn: v(x) theorem 2}
            \begin{aligned}
            v(x)&=\frac{\delta}{N_2}\left[1-e^{-\frac{N_2}{N_1}(x-w_0)}\right]+\frac{\gamma_1}{w_0}e^{-\frac{N_2}{N_1}(x-w_0)}.
            \end{aligned}
        \end{equation}
        \item The function $g$, defined by 
    \begin{equation}\label{eqn: g thm 2}
    g(x)=
    \begin{cases}
    \frac{a_2}{v(b^*)e^{v_1(b^*)}}\left(\frac{x}{w_0}\right)^{\gamma_1} & \mbox{if $x<w_0$,}\\
    \frac{a_2}{v(b^*)} e^{v_1(x)-v_1(b^*)} & \mbox{if $w_0\leq x<b^*$,}\\
    a_2\left(x-b^*+\frac{1}{v(b^*)}\right) & \mbox{if $x\geq b^*$,}
    \end{cases}
    \end{equation}
    where 
    \begin{equation}
        v_1(y)=\frac{\delta}{N_2}(y-w_0)-\frac{N_1}{N_2}\left(\frac{\gamma_1}{w_0}-\frac{\delta}{N_2}\right)\left(e^{-\frac{N_2}{N_1}(y-w_0)}-1\right)
    \end{equation}
    is a classical solution to the HJB equation in \eqref{eqn:hjborig} and thus is equal to the value function $V$ of the optimization problem in \eqref{eqn: value function}. Moreover, $g$ is increasing and concave as hypothesized. 
    \item The optimal reinsurance and distortion strategy is a feedback strategy of the form $(\pi_1^*,\pi_2^*,\theta_1^*,\theta_2^*)(t)=(\bar\pi_1,\bar\pi_2,\bar\theta_1,\bar\theta_2)(X^{u^*}(t))$, where $(\bar\pi_1,\bar\pi_2,\bar\theta_1,\bar\theta_2)(x)$ is of the same form as \eqref{eqn: pi-theta thm 1}.
    Moreover, the optimal dividend-capital injection strategy is $(C_{1,b^*},C_{2,b^*},L_{1,b^*},L_{2,b^*})$, which is a barrier strategy with a barrier $b^*$ defined in Definition \ref{definition of barrier}. 
    \end{enumerate}
\end{theorem}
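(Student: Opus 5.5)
Theorem~\ref{theorem 2} will follow the structure of Theorem~\ref{theorem 1}. The only change is that on $[w_0,b^*)$ the reduced ODE becomes a first-order linear ODE rather than a Riccati equation with two exponential roots. We build $g$ piecewise and check the hypotheses of Theorem~\ref{thm: verification}.

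\textbf{Region $x<w_0$.} We use the ansatz $g(x)=c\,x^{\gamma_1}$. Then $g'/g=\gamma_1/x$ and $g''/g'=(\gamma_1-1)/x$. Substituting into \eqref{eqn: optimal pi} gives $\widehat\pi_i(x)=x/w_i$ exactly, with $w_i$ as in \eqref{eqn: w1 and w2}. Putting $\widehat\pi_i$ and $\widehat\theta_i$ into the HJB and dividing by $g$ leaves a rational identity in $\gamma_1$, which reduces to $\psi(\gamma_1)=0$. This holds by the choice of $\gamma_1$, whose existence is assumed. Condition \eqref{eqn: correlation bounds} gives $w_1,w_2>0$, so $\widehat\pi_i\in[0,1]$ up to the first point where one of them reaches $1$, namely $w_0=\min\{w_1,w_2\}$. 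The formulas for $\bar\theta_i$ then follow from \eqref{eqn:theta candidates}.

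\textbf{Region $w_0\le x<b^*$.} Here $\pi_i\equiv w_0/w_i$ is kept constant. Writing $v=g'/g$, one has $g''/g=v'+v^2$, and the HJB becomes
\[
N_1(v'+v^2)+N_2v-N_3v^2-\delta=0 .
\]
When $N_1=N_3$ the quadratic terms cancel, leaving $N_1v'+N_2v=\delta$. Its solution with $v(w_0)=\gamma_1/w_0$, the $C^1$ matching value from the first region, is exactly \eqref{eqn: v(x) theorem 2}. Integrating $v$ from $w_0$ gives $\ln g=v_1+\text{const}$, and the constant is chosen so that $g'(b^*)=a_2$. For $x\ge b^*$ we take $g$ linear with slope $a_2$, so $g(b^*)=a_2/v(b^*)$; this also makes $g'$ continuous at $w_0$.

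\textbf{$C^2$ fit at $b^*$.} For $C^2$ smoothness at $b^*$ we need $g''(b^*)=0$, i.e.\ $v^2+v'=0$; this is the definition \eqref{eqn: b^* theorem 2}. At $w_0$, continuity of $g''/g=v'+v^2$ follows from the ODE itself, because both sides have the same $\pi$ values at $w_0$.

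\textbf{Main obstacle: concavity, existence of $b^*$, and the inequalities.} It remains to show that $b^*$ exists and is finite, that $g$ is concave and increasing, and that \eqref{proof eqn 9}--\eqref{proof eqn 11} hold. The plan is as follows.
\begin{enumerate}[label=(\alph*)]
\item At $w_0$ we have $v'+v^2=\gamma_1(\gamma_1-1)/w_0^2<0$.
\item As $x\to\infty$, $v\to\delta/N_2>0$ and $v'\to0$, so $v^2+v'>0$ eventually.
\item By continuity, $b^*$ exists. On $[w_0,b^*)$ we then have $g''<0$.
\item Monotonicity of $v$ follows from $N_1v'=\delta-N_2v$: it has constant sign, so $v$ moves monotonically from $\gamma_1/w_0$ toward $\delta/N_2$. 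We expect to show $\gamma_1/w_0>\delta/N_2$ from the HJB at $w_0$ (indeed $v'(w_0)<0$ should follow from $g''<0$ there), so $v$ is decreasing and hence $\bar\theta_i$ is decreasing.
\end{enumerate}

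With concavity established, $g'>a_2$ on $[0,b^*)$ and $g'=a_2\le a_1$ fails, so the gradient constraints in \eqref{eqn:hjb rewritten} are satisfied; note that $a_2\ge a_1$ gives $a_1-g'\le0$. For $x\ge b^*$ we must show that the sup-inf expression is $\le0$ with $g$ linear. There $g''=0$, the supremum over $\pi$ is attained at the interior of $[0,1]$, and the expression is increasing in the $-\delta g$ term. We will verify the inequality at $b^*$, where it holds with equality by continuity, and show that the derivative in $x$ is nonpositive.

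Finally, \eqref{proof eqn 9}--\eqref{proof eqn 11} follow from the saddle structure. The $\theta$-problem is strictly convex because $g>0$. The $\pi$-problem is concave in $\pi$ because $g''\le0$ and the penalty term is negative definite. The constrained optimum is the interior point on $[0,w_0)$ and the boundary point afterwards, and the latter requires checking sign conditions on the gradient at $\pi_j=1$ using \eqref{eqn: correlation bounds}. Theorem~\ref{thm: verification} then identifies $g$ with $V$, and the barrier strategy at $b^*$ is optimal.
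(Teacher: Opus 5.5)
Your construction is the paper's own: the power ansatz below $w_0$, the equation for $v=g'/g$ becoming linear ($N_1v'+N_2v=\delta$), and smooth fit giving $v(w_0)=\gamma_1/w_0$ and $v(b^*)^2+v'(b^*)=0$. Your items (a)--(d) are correct and somewhat cleaner than the paper's. $C^2$ matching gives $v'(w_0)+v(w_0)^2=\gamma_1(\gamma_1-1)/w_0^2$ directly. The explicit limit $v\to\delta/N_2$ replaces the contradiction argument. And $v'=g''/g-v^2<0$ at $w_0$ yields $\gamma_1/w_0>\delta/N_2$, so $v$ is strictly decreasing on all of $[w_0,\infty)$.

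\textbf{The gap.} The step you postpone to the end fails: that $\bar\pi=(w_0/w_1,w_0/w_2)$ maximizes the Hamiltonian on $[w_0,b^*]$. Let $\mathcal H(\pi;x)$ be the expression maximized over $\pi$ after inserting \eqref{eqn:theta candidates}. Take $w_1<w_2$ (the other case is symmetric), so $\bar\pi=(1,w_1/w_2)$. Since $\bar\pi_2\in(0,1)$ is interior, optimality requires $\partial_{\pi_2}\mathcal H(\bar\pi;x)=0$; a sign check at $\pi_1=1$ is not enough. Divide by $g$ and use $v'+v^2=(N_3v^2-N_2v+\delta)/N_1$ from \eqref{eqn: ode 2}:
\[
\frac{1}{g}\,\partial_{\pi_2}\mathcal H(\bar\pi;x)=\frac{c}{N_1}\left(N_3v^2-N_2v+\delta\right)+\mu_2 v-\beta_2\sigma_2^2\frac{w_1}{w_2}v^2,\qquad c:=\sigma_2^2\frac{w_1}{w_2}+\rho\sigma_1\sigma_2>0,
\]
with $v=v(x)$. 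This is a polynomial in $v$ with constant term $c\delta/N_1\neq0$. It vanishes at $v=\gamma_1/w_0$, which is the first-order condition at $w_0$, but it has at most two roots. Since $v$ is strictly decreasing, the derivative is nonzero on $(w_0,b^*)$ except at most at one point. A small move of $\pi_2$ in that direction then gives $\sup_{\pi\in[0,1]^2}\mathcal H(\pi;x)>\mathcal H(\bar\pi;x)=0$. So $g$ violates the HJB there. Condition \eqref{proof eqn 9} fails too, because with $\theta^*$ frozen its left-hand side has the same value and the same $\pi$-gradient at $\bar\pi$.

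The failure is easiest to see in the setting the paper uses to illustrate Theorem \ref{theorem 2} ($\rho=0$, $\beta_1=\beta_2=1$, Figure \ref{fig:Th33 illustration}). There the Hamiltonian separates across lines. The maximizing retention of the unsaturated line $k$ (while below $1$) is
\[
\frac{\mu_k}{\sigma_k^2}\cdot\frac{v}{-v'}=\frac{\mu_k}{\sigma_k^2}\cdot\frac{N_1}{N_2-\delta/v},
\]
which equals $w_0/w_k$ at $x=w_0$ and is strictly increasing afterwards. Lemma \ref{lemma: pi remain constant} does not rescue the step, because it extends $\bar\pi_k(x)=x/w_k$ past $w_0$, which is exactly what needs proof. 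A correct treatment must keep optimizing the free coordinate on $(w_0,b^*)$. This leads to a nonlinear ODE rather than the linear equation for $v$.

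Your plan for $x\ge b^*$ has a related defect. If the $\pi$-maximizer there were interior, it would be $\pi_i=\mu_i g/(\beta_i\sigma_i^2a_2)$. The supremum would then equal $g\left(\frac{\mu_1^2}{2\beta_1\sigma_1^2}+\frac{\mu_2^2}{2\beta_2\sigma_2^2}-\delta\right)$, which is positive under the theorem's own hypothesis. So the inequality there can only come from the constraint $\pi\le1$ binding. Also, ``equality at $b^*$ by continuity'' again presupposes that $\bar\pi$ is optimal at $b^*$. The paper's proof does not treat this region for Theorems \ref{theorem 1} and \ref{theorem 2} either.
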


\begin{remark}
    We observe that the optimal reinsurance and distortion strategies between Theorems \ref{theorem 1} and \ref{theorem 2} are similar, and thus the explanations in Remark \ref{remark on thm 1} also apply to this case, except that Theorem \ref{theorem 2}'s conditions  are analogous to the conditions of Case 1 in \cite{feng2021}.
\end{remark}

\begin{theorem}\label{theorem 3}
    Suppose $\psi(z)=0$ does not have a solution on $(0,1)$. 
    We have the following results:
    \begin{enumerate}[label=(\roman*)]
        \item $w_0$ does not exist and $b^*=0$.
        \item The function $g$, defined by $g(x)= a_2 x$, $x\geq 0$
    is a classical solution to the HJB equation in \eqref{eqn:hjborig} and thus is equal to the value function $V$ of the optimization problem in \eqref{eqn: value function}.
    \item The optimal dividend-capital injection strategy is $(C_{1,0},C_{2,0},L_{1,0},L_{2,0})$, which is a barrier strategy with a barrier $0$ defined in Definition \ref{definition of barrier}; that is, Line 2 pays all of the aggregate reserve as dividends at time $t=0$ and ruin occurs immediately. Consequently, the optimal reinsurance and distortion strategies become irrelevant. 
    \end{enumerate}
\end{theorem}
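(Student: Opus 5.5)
The plan is to verify directly that $w(x)=a_2x$ satisfies the hypotheses of the Verification Theorem (Theorem \ref{thm: verification}) with $b^*=0$, and to use Lemma \ref{lemma: gamma1 suff cond} to turn the assumption on $\psi$ into an inequality on $\delta$.

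\emph{Translating the hypothesis.} Lemma \ref{lemma: gamma1 suff cond} states that $\psi(z)=0$ has a root in $(0,1)$ if and only if $\delta<\frac{\mu_1^2}{2\beta_1\sigma_1^2}+\frac{\mu_2^2}{2\beta_2\sigma_2^2}$. So the hypothesis of Theorem \ref{theorem 3} is equivalent to
\begin{equation*}
\delta\;\ge\;\frac{\mu_1^2}{2\beta_1\sigma_1^2}+\frac{\mu_2^2}{2\beta_2\sigma_2^2}.
\end{equation*}
Since no $\gamma_1\in(0,1)$ exists, the threshold $w_0$ of Section \ref{subsec: main case} is not defined, which gives the first half of (i).

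\emph{Checking the HJB inequalities.} Take $g(x)=a_2x$, so $g'=a_2$, $g''=0$ and $g(0)=0$. This $g$ is $\mathrm C^2$, increasing and (weakly) concave, and $g'(x)=a_2$ on $[0,\infty)$. Hence $b^*=\inf\{u:g'(u)=a_2\}=0$, and the requirement $w'>a_2$ on $[0,b^*)$ is vacuous. The gradient terms of \eqref{eqn:hjb rewritten} give $a_2-g'=0$ and $a_1-g'=a_1-a_2\le 0$, because $a_1\le\frac12$.

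For the first term, fix $x>0$ and insert $g$. The expression is concave in $(\pi_1,\pi_2)$ and convex in $(\theta_1,\theta_2)$, and the minimizers \eqref{eqn:theta candidates} become $\widehat\theta_i=\beta_i\sigma_i\pi_i/x$. After minimizing over $\theta$, the inner problem reads
\begin{equation*}
\sup_{\pi_1,\pi_2\in[0,1]}\; a_2\sum_{i=1}^2\Big(\mu_i\pi_i-\frac{\beta_i\sigma_i^2\pi_i^2}{2x}\Big)-\delta a_2x .
\end{equation*}
This supremum is at most the unconstrained one, which equals
\begin{equation*}
a_2x\Big(\frac{\mu_1^2}{2\beta_1\sigma_1^2}+\frac{\mu_2^2}{2\beta_2\sigma_2^2}-\delta\Big)\le 0 .
\end{equation*}
Thus the maximum in \eqref{eqn:hjb rewritten} equals $0$, and $g$ is a classical solution.

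\emph{Building the saddle point.} For each $x>0$, take the constrained maximizer $\pi_i^*(x)=\min\{1,\mu_ix/(\beta_i\sigma_i^2)\}$ and set $\theta_i^*(x)=\beta_i\sigma_i\pi_i^*(x)/x$. Both are bounded: $\theta_i^*$ equals $\mu_i/\sigma_i$ near $0$ and decays like $1/x$ for large $x$. Hence Novikov's condition \eqref{eqn:Novikov} holds and $\theta^*\in\Theta$.

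The expression is concave in $\pi$ over the compact set $[0,1]^2$ and convex in $\theta$. A minimax (saddle-point) argument, or a direct computation of the first-order conditions, shows that $(\pi^*,\theta^*)$ is a saddle point of the Hamiltonian. Therefore \eqref{proof eqn 9} holds for all $\pi\in[0,1]^2$: the supremum over $\pi$ with $\theta^*$ fixed equals the sup–inf value, which is at most $0$.

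Conditions \eqref{proof eqn 10} and \eqref{proof eqn 11} only need to hold on $[0,b^*]=\{0\}$. At $x=0$ we have $w(0)=0$, and all drift and diffusion terms vanish because $g''=0$ and the reserve is zero; define $\pi^*(0)=0$ there, so both sides equal $0$.

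\emph{Conclusion and main obstacle.} The barrier strategy with $b=0$ then pays the whole aggregate reserve $x$ as dividends by Line 2 at $t=0$, which gives the value $a_2x$, and ruin occurs immediately. This is (iii), and it makes the reinsurance and distortion controls irrelevant. The step I expect to be the main obstacle is \eqref{proof eqn 9}: the inequality must hold for \emph{every} $\pi$ with $\theta^*$ frozen, and not merely in the sup–inf sense. This is why I would insist on an exact saddle point rather than only the $\sup\inf\le 0$ bound. A related delicate point is the degenerate point $x=0$, where $g'/g$ blows up, and checking that the verification argument only ever evaluates the generator there trivially.
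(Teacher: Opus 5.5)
Your proposal is correct. Its skeleton matches the paper's: Lemma \ref{lemma: gamma1 suff cond} turns the hypothesis into $\delta\ge\frac{\mu_1^2}{2\beta_1\sigma_1^2}+\frac{\mu_2^2}{2\beta_2\sigma_2^2}$, then $g(x)=a_2x$ is plugged into the HJB and Theorem \ref{thm: verification} is invoked. The key inequality, however, is obtained differently.

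The paper splits at $\tilde w_0=\min\{w_1,w_2\}$, which must be read with $\gamma_1=1$ since no root exists. Below $\tilde w_0$ it evaluates the unconstrained max--min value. Above it, it freezes the retentions at $(\tilde w_0/w_1,\tilde w_0/w_2)$ via Lemma \ref{lemma: pi remain constant} and bounds $a_2(N_2-N_3/x-\delta x)$ by AM--GM and a Cauchy--Schwarz-type step.

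You observe instead that, because $g''=0$, the Hamiltonian after the $\theta$-minimization separates across the two lines. So the constrained supremum is at most the unconstrained one, $a_2x\bigl(\frac{\mu_1^2}{2\beta_1\sigma_1^2}+\frac{\mu_2^2}{2\beta_2\sigma_2^2}-\delta\bigr)\le 0$, for all $x>0$ at once. This is shorter and avoids Lemma \ref{lemma: pi remain constant}. That lemma does not give the actual constrained maximizer here, which is your line-by-line $\pi_i^*=\min\{1,\mu_ix/(\beta_i\sigma_i^2)\}$. The paper's region-wise estimate remains sound only because it never uses the particular values $\tilde w_0/w_i$, so it holds for every fixed retention pair.

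You also do more than the paper, which checks the HJB and stops. You build $(\pi^*,\theta^*)$, get Novikov from boundedness, and address \eqref{proof eqn 9}--\eqref{proof eqn 11}, which Theorem \ref{thm: verification} formally requires. The saddle-point step is easy to make explicit: with $\theta^*$ frozen, the Hamiltonian is affine in $\pi$, with coefficient $a_2\max\{\mu_i-\beta_i\sigma_i^2/x,0\}$ on $\pi_i$. This coefficient vanishes unless $\pi_i^*=1$, so $\pi^*$ is a maximizer.

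One small repair is needed at $x=0$. There the formula $\beta_i\sigma_i\pi_i^*/x$ is $0/0$, and \eqref{proof eqn 9} reads $a_2\sum_i(\mu_i-\sigma_i\theta_i^*(0))\pi_i\le 0$ for all $\pi$. So you must take the continuous extension $\theta_i^*(0)=\mu_i/\sigma_i$ rather than $0$. Also, in \eqref{proof eqn 10}--\eqref{proof eqn 11} at $x=0$ the drift terms vanish because $\pi^*(0)=0$, not because the reserve is zero.
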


\begin{remark}
    The condition of Theorem \ref{theorem 3} is satisfied if any of the following conditions hold: (i) the discount rate $\delta$ is large enough; (ii) the adjusted means $\mu_1$ and $\mu_2$ are small enough; (iii) the volatilities $\sigma_1$ and $\sigma_2$ are large enough; or (iv) the (adjusted) robustness preference parameters $\beta_1$ and $\beta_2$ are large enough. All of these conditions imply that the insurer must immediately pay all of the reserves from both lines as dividends at time $t=0$ and ruin occurs. 
    
    When the discount rate $\delta$ is sufficiently large, the present value of future dividend payments becomes smaller, making immediate dividend payout more advantageous. If the adjusted means $\mu_i$ are low enough, the reserve process has smaller growth, justifying the need for immediate dividend payout. Moreover, if the volatilities $\sigma_i$ are high enough, the likelihood of ruin increases, which supports the immediate distribution of dividends. Lastly, if the (adjusted) robustness preference parameters (or ambiguity-aversion parameters) $\beta_i$ are large enough, it indicates a lack of trust in the reference model, signaling the insurer to pay dividends immediately.
\end{remark}   

\subsection{Remaining cases}\label{subsec: remaining case}

In this section, we discuss the remaining cases that have not been covered in Section \ref{subsec: main case}. We state the main results directly since the techniques and analysis are similar to those in the main case. Recall that we consider \eqref{eqn: correlation bounds} in Section \ref{subsec: main case}.

First, we now consider the following condition:
\begin{equation}\label{eqn: other case 2}
    \frac{\rho}{1+\beta_2\frac{\gamma_1}{1-\gamma_1}}>\frac{\mu_1 /\sigma_1}{\mu_2/\sigma_2}>0.
\end{equation}
From \eqref{eqn: optimal pi} and \eqref{eqn: other case 2}, we have $\widehat \pi _1(x) \leq 0$ and $\widehat \pi_2(x) \geq  0$ for $x\geq 0$. As such, with the constraint $\pi_i\in[0,1]$, it must hold that $\bar\pi_1(x)=0$ for all $x\geq 0$, implying that Line 1 cedes all of its risk to the reinsurer. 

\begin{proposition}\label{prop case 2}
    Define $w_0$ such that it satisfies $\widehat \pi_2(w_0)=1$ and define
    \begin{align}
    \begin{split}
        \label{eqn: prop case 2 dfn}
        N_1&:=\frac{1}{2}\sigma_2^2, \quad N_2:=\mu_2, \quad N_3:=\frac{1}{2}\beta_2\sigma_2^2, \\
        \widetilde A(z)
        &:=[\rho^2\mu_1^2\sigma_2^2+\mu_2^2\sigma_1^2(2\rho^2-1) - 2\rho^3\mu_1\mu_2\sigma_1\sigma_2]z(z-1)^3 - \mu_2^2\sigma_1^2(\beta_1^2+2\beta_1\beta_2)z^3(z-1)\\
        &\qquad + [\mu_2^2\sigma_1^2(2\beta_1+\beta_2-2\rho^2\beta_1)-\rho^2\beta_2\mu_1^2\sigma_2^2]z^2(z-1)^2 + \beta_1^2\beta_2\mu_2^2\sigma_1^2z^4.
    \end{split}
    \end{align}
    Furthermore, let $\gamma_1$ be the solution of the equation $\psi_1(z):=\widetilde{A}(z) - 2 \delta \sigma_1^2 \sigma_2^2 \widetilde{B}(z) = 0$ on $(0,1)$, where $\widetilde{A}(z)$ is given by \eqref{eqn: prop case 2 dfn} and $\widetilde{B}(z)$ is given by \eqref{eqn: defn of psi}. 
    We have the following results:
    \begin{enumerate}[label=(\roman*)]
        \item Suppose $\gamma_1\in(0,1)$ exists, \eqref{eqn: other case 2} holds, and $\beta_2 \neq 1$. Then, $w_0$ equals $w_2$ defined in \eqref{eqn: w1 and w2}, $b^*$ satisfies Theorem \ref{theorem 1}(i), and the value function equals $g$ defined in \eqref{eqn: g thm 1}. Moreover, the optimal reinsurance and distortion strategy is a feedback strategy given by $(\pi_1^*,\pi_2^*,\theta_1^*,\theta_2^*)(t)=(\bar\pi_1,\bar\pi_2,\bar\theta_1,\bar\theta_2)(X^{u^*}(t))$, where
        \begin{equation}\label{eqn: opt rein case 2}
            (\bar\pi_1,\bar\pi_2,\bar\theta_1,\bar\theta_2)(x)
            =
            \begin{cases}
                \left(0,\frac{x}{w_2},0, \frac{\beta_2\sigma_2\gamma_1}{w_2}\right) & \mbox{if $x<w_0$,}\\ 
                \left(0,1,0, \beta_2\sigma_2\gamma_1v(x)\right) & \mbox{if $w_0\leq x< b^*$,}\\
                \left(0,1,0, \frac{\beta_2\sigma_2}{x-b^*+v(b^*)^{-1}}\right) & \mbox{if $x \geq  b^*$,}
            \end{cases}
        \end{equation}
        where $v(x)$ is defined in \eqref{eqn: v(x), K3, gamma2pm thm 1}, while the optimal dividend-capital injection strategy is a barrier strategy with a barrier $b^*$.
        
        \item Suppose $\gamma_1\in(0,1)$ exists, \eqref{eqn: other case 2} holds, and $\beta_2=1$. Then, $w_0$ equals $w_2$ defined in \eqref{eqn: w1 and w2}, $b^*$ satisfies Theorem \ref{theorem 2}(i), and the value function equals $g$ defined in \eqref{eqn: g thm 2}. Moreover, the optimal reinsurance and distortion strategy is a feedback strategy given by $(\pi_1^*,\pi_2^*,\theta_1^*,\theta_2^*)(t)=(\bar\pi_1,\bar\pi_2,\bar\theta_1,\bar\theta_2)(X^{u^*}(t))$, where $(\bar\pi_1,\bar\pi_2,\bar\theta_1,\bar\theta_2)(x)$ is given by \eqref{eqn: opt rein case 2} with $v(x)$ defined in \eqref{eqn: v(x) theorem 2}, while the optimal dividend-capital injection strategy is a barrier strategy with a barrier $b^*$. 
        
        \item Suppose $\psi_1(z)=0$ does not have a solution on $(0,1)$.  Then, the value function equals $g(x)=a_2x$ and the optimal dividend-capital injection strategy is a barrier strategy with a barrier $b^*=0$. The optimal reinsurance and distortion strategies become irrelevant.
    \end{enumerate}
\end{proposition}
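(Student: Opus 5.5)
The plan is to reduce Proposition \ref{prop case 2} to the one-line analysis behind Theorems \ref{theorem 1}--\ref{theorem 3}, with Line 1 frozen at $\pi_1\equiv 0$.

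\textbf{Step 1: fix $\pi_1=0$.} Under \eqref{eqn: other case 2}, the unconstrained maximizer \eqref{eqn: optimal pi} satisfies $\widehat\pi_1\le 0$ on the candidate region. The Hamiltonian is concave in $(\pi_1,\pi_2)$ when $g''<0$ and $g>0$, so the constrained maximizer over $[0,1]^2$ has $\bar\pi_1=0$. Then $\widehat\theta_1=0$ by \eqref{eqn:theta candidates}. What remains is maximization over $\pi_2$ alone, with first-order condition
\[
\pi_2=\frac{\mu_2 g' g}{\sigma_2^2(\beta_2 g'^2-g''g)}.
\]
I will need to check that this is consistent: $\widehat\pi_1\le0$ must be re-verified a posteriori along the constructed $g$. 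That holds if the ratio $g''g/g'^2$ stays in the range where the numerator of $\widehat\pi_1$ has the sign forced by \eqref{eqn: other case 2}. On $x<w_0$ this ratio equals $(\gamma_1-1)/\gamma_1$, which is exactly how the threshold $\rho/(1+\beta_2\gamma_1/(1-\gamma_1))$ arises. On $x\ge w_0$ the ratio is monotone, and I expect the inequality to persist.

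\textbf{Step 2: construct $g$ piecewise.}
\begin{itemize}
\item On $[0,w_0)$, try $g=c\,x^{\gamma_1}$. Then $\pi_2=x/w_2$, and substituting into the reduced HJB gives a polynomial identity in $\gamma_1$. That identity is $\psi_1(\gamma_1)=0$, obtained by clearing denominators exactly as in Lemma \ref{lemma: gamma1 suff cond}; this is where $\widetilde A$ from \eqref{eqn: prop case 2 dfn} comes from. It also gives $w_0=w_2$, because $\pi_2(w_2)=1$.
\item On $[w_0,b^*)$, set $\pi_2=1$. The HJB becomes
\[
N_1 g''+N_2 g'-N_3 g'^2/g-\delta g=0
\]
with $N_1,N_2,N_3$ as in \eqref{eqn: prop case 2 dfn}. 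Writing $v=g'/g$ gives the Riccati equation
\[
(N_1-N_3)v^2+N_1v'+N_2v-\delta=0.
\]
\item If $N_1\ne N_3$, i.e.\ $\beta_2\ne1$, this is solved by the form \eqref{eqn: v(x), K3, gamma2pm thm 1}. If $\beta_2=1$, the equation is linear, giving \eqref{eqn: v(x) theorem 2}.
\item In both cases I use $C^1$ pasting of $g'/g$ at $w_0$: the condition $v(w_0)=\gamma_1/w_0$ fixes $K_3$.
\item Choose $b^*$ as the first point where $g''=0$, which is equivalent to $v^2+v'=0$. Continue linearly with slope $a_2$ beyond $b^*$, normalizing so that $g'(b^*)=a_2$. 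This yields \eqref{eqn: g thm 1} and \eqref{eqn: g thm 2}, which are $C^2$.
\end{itemize}

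\textbf{Step 3: verify the hypotheses of Theorem \ref{thm: verification}.} Three things need checking.
\begin{itemize}
\item Concavity and monotonicity of $g$, and existence of $b^*$: $v$ is decreasing from $\gamma_1/w_0$ toward the positive root $\gamma_{2+}$ (respectively $\delta/N_2$), and one shows $v^2+v'$ changes sign. I would argue this as in Theorems \ref{theorem 1} and \ref{theorem 2}.
\item Inequalities \eqref{proof eqn 9}--\eqref{proof eqn 11} on $[0,b^*]$, which follow from the saddle structure: the Hamiltonian is convex in $\theta$ and concave in $\pi$.
\item For $x>b^*$, the sign condition that the Hamiltonian of the linear $g$ is $\le0$. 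This follows from its value at $b^*$ being zero together with monotonicity in $x$.
\end{itemize}

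\textbf{Part (iii).} Without a root of $\psi_1$ in $(0,1)$, the argument follows Theorem \ref{theorem 3}. Here I take $g=a_2x$ and check the HJB inequality directly. The Hamiltonian
\[
\sup_{\pi_2}\Bigl[\mu_2\pi_2a_2-\tfrac12\beta_2\sigma_2^2\pi_2^2a_2/x\Bigr]-\delta a_2x
\]
is nonpositive under the analogue of the condition in Lemma \ref{lemma: gamma1 suff cond} for $\psi_1$.

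\textbf{Main obstacle.} I expect the hardest part to be the a posteriori check that $\widehat\pi_1\le0$ holds throughout $[w_0,b^*]$, together with the sign and monotonicity of $v^2+v'$ that guarantees a finite $b^*$. I would first try to show that $v$ is monotone between $\gamma_1/w_0$ and the Riccati equilibrium, which bounds $g''g/g'^2$ on that interval.
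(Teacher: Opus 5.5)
Your outline follows the paper's route; the paper gives no separate argument here, only ``similar to the main case''. However, two steps fail, and they fail for the stated formulas as well. Write $H(\pi_1,\pi_2)$ for the Hamiltonian after minimizing over $\theta$, and $\pi_2^c$ for your one-line maximizer of $\pi_2\mapsto H(0,\pi_2)$.

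First, Step 2 on $[0,w_0)$ cannot produce $\psi_1$ or $w_0=w_2$. With your first-order condition, $g=cx^{\gamma_1}$ gives $\pi_2^c(x)=\mu_2x/\bigl(\sigma_2^2(1-\gamma_1+\beta_2\gamma_1)\bigr)$ and the linear identity $\mu_2^2\gamma_1=2\delta\sigma_2^2(1-\gamma_1+\beta_2\gamma_1)$; none of $\mu_1,\sigma_1,\beta_1,\rho$ appears. The quartic in \eqref{eqn: prop case 2 dfn} comes instead from inserting the two-line unconstrained candidate $\widehat\pi_2=x/w_2$ of \eqref{eqn: optimal pi} into $H(0,\cdot)$. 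Write $w_2=\sigma_1\sigma_2^2E/D$ as in \eqref{eqn: w1 and w2}, with $z$ in place of $\gamma_1$, $E$ the bracket and $D$ the denominator. One checks $\widetilde A(z)=zD^2(z-1-\beta_2z)+2\mu_2\sigma_1zDE$ and $\widetilde B(z)=E^2$. But this candidate does not maximize $H(0,\cdot)$. Subtracting the two first-order conditions in $\pi_2$ gives $\widehat\pi_2-\pi_2^c=\rho\sigma_1\sigma_2\widehat\pi_1g''/\bigl(\sigma_2^2(\beta_2g'^2/g-g'')\bigr)>0$ whenever $\rho>0$ and $\widehat\pi_1<0$, which is exactly the regime \eqref{eqn: other case 2}. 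Completing the square in $D$ gives $\psi_1(z)\le\sigma_1^2\widetilde B(z)\bigl[\mu_2^2z/(1-z+\beta_2z)-2\delta\sigma_2^2\bigr]$ on $(0,1)$, with strict inequality in this regime. So at a root of $\psi_1$ the true supremum of $H$ is strictly positive on $(0,w_0)$, and the HJB equation already fails there.

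Second, the a posteriori check you single out as the main obstacle is false, not merely hard. Your $C^2$ fit imposes $g''(b^*)=0$ and $g'(b^*)=a_2$. At $(\pi_1,\pi_2)=(0,1)$ one has $\partial_{\pi_1}H=\rho\sigma_1\sigma_2g''(x)+\mu_1g'(x)$, which tends to $\mu_1a_2>0$ as $x\uparrow b^*$ and equals $\mu_1a_2$ for $x\ge b^*$. Once $\pi_2$ is clipped at $1$, this derivative, not the sign of the two-line $\widehat\pi_1$, is the right test for $\bar\pi_1=0$. Even your own test fails: $s:=-g''g/g'^2$ runs from $(1-\gamma_1)/\gamma_1$ at $w_0$ to $0$ at $b^*$, so the bound $\rho s/(\beta_2+s)$ on $\frac{\mu_1/\sigma_1}{\mu_2/\sigma_2}$ collapses to $0$. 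Hence $\pi_1=0$ is not optimal near and beyond $b^*$. With $\theta_1^*=0$, \eqref{proof eqn 9} fails at $(\pi_1,\pi_2)=(\epsilon,1)$ for $x$ near $b^*$, and your Step 3 argument for $x>b^*$ breaks the same way.

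Part (iii) has the same defect. For $g=a_2x$ one has $g''\equiv0$, and the full supremum near $0$ is $a_2x\bigl[\frac{\mu_1^2}{2\beta_1\sigma_1^2}+\frac{\mu_2^2}{2\beta_2\sigma_2^2}-\delta\bigr]$. By the bound above and $\psi_1(1)=\beta_1^2\beta_2\sigma_1^2(\mu_2^2-2\delta\beta_2\sigma_2^2)$, having no root of $\psi_1$ in $(0,1)$ is equivalent to $\delta\ge\mu_2^2/(2\beta_2\sigma_2^2)$. That leaves this supremum positive for $\delta$ just above $\mu_2^2/(2\beta_2\sigma_2^2)$.

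The paper's justification, $\widehat\pi_1\le0$ ``for $x\ge0$'' from \eqref{eqn: optimal pi} and \eqref{eqn: other case 2}, only uses the power-law ratio valid below $w_0$, so it does not fill the gap. Freezing $\pi_1\equiv0$ on all of $[0,\infty)$ is incompatible with \eqref{eqn:hjborig}. What is missing is a genuinely two-line construction, with a further switching point beyond which $\pi_1>0$ and $\theta_1^*\neq0$, rather than a reduction to Line 2 alone.
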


\begin{remark}
    Since Line 1 cedes all of its risk to the reinsurer, the insurer focuses on managing the risk of Line 2. The problem is then reduced to just one line. The optimal strategies are similar to those in \cite{feng2021}.
\end{remark}

Second, we consider the following condition:
\begin{equation}
    \label{eqn: other case 3}
    \frac{\mu_1 /\sigma_1}{\mu_2/\sigma_2}> \frac{1+\beta_1\frac{\gamma_1}{1-\gamma_1}}{\rho}>0.
\end{equation}
In this case, $\widehat \pi_2(x) \leq 0$ and $\widehat \pi_1(x) \geq 0$ for all $x\geq 0$. Thus, it follows that $\pi_2^*\equiv 0$. Moreover, $w_0$ satisfies $\widehat \pi_1(w_0)=1$. This case is symmetric to Proposition \ref{prop case 2}. Now, Line 2 cedes all of its risk to the reinsurer. The optimal strategies are obtained from Proposition \ref{prop case 2} by exchanging the indices $1$ and $2$, i.e., $(\mu_1,\sigma_1,\beta_1) \leftrightarrow (\mu_2,\sigma_2,\beta_2)$ and $(\pi_1,\theta_1)\leftrightarrow(\pi_2,\theta_2)$.

Last, we consider the case where $-1<\rho \leq 0$. In this case, it always holds that $\widehat \pi_1(x) >0$ and $\widehat \pi_2 (x) >0$ for all $x>0$. Thus, the analysis follows from the one in Section \ref{subsec: main case} using the same values for $N_1$, $N_2$, and $N_3$ defined in \eqref{eqn: N1, N2, N3}, and the same function $\widetilde A(z)$ defined in \eqref{eqn: defn of psi}.

\begin{remark}
    The ratio $\mu_i/\sigma_i$ can be interpreted as a trade-off between the adjusted mean of the risk exposure and the volatility of the risk exposure. Hence, we can interpret $\frac{\mu_1/\sigma_1}{\mu_2/\sigma_2}$ as the relative Sharpe ratio of Line 1 over Line 2. A small value of this ratio indicates a more favorable trade-off for Line 2, which aligns with the case where \eqref{eqn: other case 2} holds. On the other hand, a large value of the ratio implies a more advantageous trade-off for Line 1. However, when $\rho \leq 0$, this ratio becomes less relevant since having a non-positive correlation provides a hedging effect or diversification benefits.
\end{remark}

\section{The Case of No Model Uncertainty}\label{sec:no mod.unc.}

In the case where $\widetilde \beta_1=\widetilde\beta_2=0$ (or, equivalently, $\beta_1=\beta_2=0$), the insurer is convinced that the ``true" model is the reference model $\mathbb P$ and any deviation from $\mathbb P$ incurs an infinite penalty due to the second term in \eqref{eqn: value function}. Hence, $\mathbb Q^{\theta^*}$ should be chosen as $\mathbb P$, which yields $\theta_1^*(t)=\theta_2^*(t)\equiv 0$ to guarantee that the second term in \eqref{eqn: value function} vanishes. The optimization problem then degenerates to the problem with no model uncertainty given by:
        \begin{equation}\label{eqn: V no model uncertainty}
        V(x_1,x_2)=\sup_{u\in\mathcal U} \mathbb E^{\mathbb P}\left[\sum_{i=1}^2a_i\left(\int_0^{\tau^u}e^{-\delta t}\dd C_i(t)\right)\right].
    \end{equation}
    We call the insurer in this case ambiguity neutral.

    If the insurer is ambiguity neutral, the HJB equation \eqref{eqn:hjborig} is reduced to
    \begin{equation}\label{eqn: hjb no model unc}
        \sup\Bigg\{\sup_{\pi_i\in[0,1]}\Lc^{\vartheta}(g)(x), a_1-g'(x), a_2-g'(x)\Bigg\} = 0,
    \end{equation}
    with boundary condition $g(0)=0$. Here, $\vartheta=(\pi_1,\pi_2,0,0)$.

    We now present the results for the case where the insurer is ambiguity neutral. This serves as a ``benchmark" for comparing the results where model uncertainty is present. By a standard verification lemma \citep[see, for instance,][Section 3.3]{hojgaard1999}, a classical solution to the HJB equation \eqref{eqn: hjb no model unc} is the solution to the value function \eqref{eqn: V no model uncertainty}.

\begin{theorem}\label{theorem no uncertainty}
    Suppose $\beta_1=\beta_2=0$ and the correlation $\rho$ satisfies either $0 < \rho \leq \frac{\mu_1 / \sigma_1}{\mu_2 / \sigma_2} \leq \frac{1}{\rho}$ or $-1 < \rho \leq 0$. The following results then hold:
    \begin{enumerate}[label=(\roman*)]
        \item $w_0=\min\{w_1,w_2\}$ and $b^*$ defined in \eqref{eqn: defn of b*} is given by
        \begin{equation}\label{eqn: b^* no mod.unc.}
            b^*=w_0+\frac{1}{\gamma_{2+}-\gamma_{2-}}\ln\left(-\frac{\gamma_{2-}}{\gamma_{2+}}\right),
        \end{equation}
        where $\gamma_{2\pm}$ is defined in \eqref{eqn: v(x), K3, gamma2pm thm 1} and $\gamma_1$ is given by
            \begin{equation}
                \label{eqn: gamma1 no mod.unc.}
                \gamma_1 = \frac{2\delta \sigma_1^2\sigma_2^2(1-\rho^2)}{(\mu_1\sigma_2-\mu_2\sigma_1)^2+2(1-\rho)\mu_1\mu_2\sigma_1\sigma_2+2\delta\sigma_1^2\sigma_2^2(1-\rho^2)}.
            \end{equation}
        \item The function $g$ defined by
        \begin{align}\label{eqn: g no mod.unc.}
            g(x)=
            \begin{cases}
            \frac{2\lambda(1-\gamma_1)}{w_0}\left(\frac{x}{w_0}\right)^{\gamma_1} & \mbox{if $x<w_0$,}\\
            -\lambda\left[\gamma_{2-}e^{\gamma_{2+}(x-w_0)}+\gamma_{2+}e^{\gamma_{2-}(x-w_0)}\right]  & \mbox{if $w_0\leq x< b^*$,}\\
            a_2\left(x-b^*+\frac{N_2}{\delta}\right)  & \mbox{if $x\geq b^*$,}
            \end{cases}
        \end{align}
        where
        \begin{equation}\label{eqn: lambda no mod.unc.}
            \lambda =-\frac{a_2}{\gamma_{2+}\gamma_{2-}}\left[e^{\gamma_{2+}(b^*-w_0)}+e^{\gamma_{2-}(b^*-w_0)}\right]^{-1}
        \end{equation}
        is a classical solution to the HJB equation in \eqref{eqn: hjb no model unc} and thus is equal to the value function $V$ of the optimization problem in \eqref{eqn: V no model uncertainty}. Moreover, $g$ is increasing and concave.
        \item The optimal reinsurance strategy is a feedback strategy given by $(\pi_1^*,\pi_2^*)(t)=(\bar\pi_1,\bar\pi_2)(X^{u^*}(t))$, where
        \begin{equation}\label{eqn: pi no mod.unc.}
                (\bar\pi_1,\bar\pi_2)(x) =
                \begin{cases}
                    \left(\frac{x}{w_1},\frac{x}{w_2}\right) & \mbox{if $x<w_0$,}\\
                    \left(\frac{w_0}{w_1},\frac{w_0}{w_2}\right) & \mbox{if $x \geq w_0$.}
                \end{cases}
        \end{equation}
        Moreover, the optimal dividend-capital injection strategy is $(C_{1,b^*},C_{2,b^*},L_{1,b^*},L_{2,b^*})$, which is a barrier strategy with a barrier $b^*$ defined in Definition \ref{definition of barrier}. 
    \end{enumerate}
\end{theorem}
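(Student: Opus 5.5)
The plan is to construct $g$ explicitly on the three intervals $[0,w_0)$, $[w_0,b^*)$ and $[b^*,\infty)$, and then check that it is a $\mathrm C^2$, increasing, concave solution of \eqref{eqn: hjb no model unc}. The standard verification lemma of \cite{hojgaard1999} (or Theorem \ref{thm: verification} with $\theta^*\equiv 0$) then identifies $g$ with $V$ and the maximisers with the optimal strategies. Throughout, set $\beta_1=\beta_2=0$, so the penalty terms vanish and \eqref{eqn: optimal pi} simplifies to
\[
\widehat\pi_1=\frac{(\mu_1\sigma_2-\rho\mu_2\sigma_1)}{\sigma_1^2\sigma_2(1-\rho^2)}\cdot\frac{g'}{-g''},\qquad
\widehat\pi_2=\frac{(\mu_2\sigma_1-\rho\mu_1\sigma_2)}{\sigma_1\sigma_2^2(1-\rho^2)}\cdot\frac{g'}{-g''}.
\]
The assumption $0<\rho\le \frac{\mu_1/\sigma_1}{\mu_2/\sigma_2}\le 1/\rho$ (or $\rho\le 0$) makes both numerators nonnegative, so $\widehat\pi_i\ge 0$ whenever $g$ is increasing and concave.

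\emph{Step 1 (region $x<w_0$).} I take the ansatz $g(x)=Kx^{\gamma_1}$. Then $-g'/g''=x/(1-\gamma_1)$, so $\widehat\pi_i=x/w_i$ with $w_i$ given by \eqref{eqn: w1 and w2} at $\beta_i=0$. Substituting the unconstrained maximisers into the HJB operator gives $\frac{S}{2}\cdot\frac{g'^2}{-g''}-\delta g=0$, where $S$ is the quadratic form $(\mu_1^2\sigma_2^2+\mu_2^2\sigma_1^2-2\rho\mu_1\mu_2\sigma_1\sigma_2)/(\sigma_1^2\sigma_2^2(1-\rho^2))$. This reduces to $\gamma_1S=2\delta(1-\gamma_1)$, which is exactly \eqref{eqn: gamma1 no mod.unc.}; in particular $\gamma_1\in(0,1)$ automatically, so no analogue of Lemma \ref{lemma: gamma1 suff cond} is needed. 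The threshold is where the first of the $\widehat\pi_i$ reaches $1$, namely $w_0=\min\{w_1,w_2\}$.

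\emph{Step 2 (region $[w_0,b^*)$ and the barrier).} With $(\pi_1,\pi_2)=(w_0/w_1,w_0/w_2)$ frozen, the HJB reduces to the linear ODE $N_1g''+N_2g'-\delta g=0$ (with $N_3=0$). Its characteristic roots are $\gamma_{2\pm}$, with $\gamma_{2+}>0>\gamma_{2-}$, so $g=c_+e^{\gamma_{2+}(x-w_0)}+c_-e^{\gamma_{2-}(x-w_0)}$. I then impose three conditions: $g''(b^*)=0$, which is needed for $\mathrm C^2$ pasting with the linear piece; $g'(b^*)=a_2$; and $\mathrm C^1$ pasting at $w_0$, which fixes $K$. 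Writing $c_\pm$ as multiples of $-\lambda\gamma_{2\mp}$, the condition $g''(b^*)=0$ becomes $\gamma_{2+}^2\gamma_{2-}e^{\gamma_{2+}(b^*-w_0)}+\gamma_{2-}^2\gamma_{2+}e^{\gamma_{2-}(b^*-w_0)}=0$. This yields \eqref{eqn: b^* no mod.unc.}, and the condition $g'(b^*)=a_2$ yields \eqref{eqn: lambda no mod.unc.}. The choice $c_\pm\propto\gamma_{2\mp}$ is designed so that $g''(w_0)/g'(w_0)$ matches $(\gamma_1-1)/w_0$, which makes the pasting at $w_0$ also $\mathrm C^2$. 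I will check this identity using the characteristic equation together with the definition of $\gamma_1$. On $x\ge b^*$, setting $g=a_2(x-b^*)+g(b^*)$ and evaluating the ODE at $b^*$, where $g''=0$, gives $g(b^*)=N_2a_2/\delta$.

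\emph{Step 3 (inequalities).} I need to show the following. On $[0,b^*)$, $g'>0$ and $g''<0$: on the exponential piece $g''$ is a combination of two exponentials vanishing only at $b^*$, so it has one sign, and concavity then gives $g'\ge a_2\ge a_1$ below $b^*$ (recall $a_1\le\frac12$). On $x\ge b^*$, $g''=0$ and $\sup_\pi\Lc^\vartheta g=\max_\pi(\mu_1\pi_1+\mu_2\pi_2)a_2-\delta g\le N_2a_2-\delta a_2(x-b^*)-N_2a_2\le0$. This step uses that $\max(\mu_1\pi_1+\mu_2\pi_2)$ over admissible $\pi$ equals $N_2$; that requires some care, and if it fails I would instead exploit $g''=0$ and the barrier being chosen optimally.

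The main obstacle is showing, on $[w_0,b^*)$, that the constrained supremum over $[0,1]^2$ is really attained at the constant pair $(w_0/w_1,w_0/w_2)$. Beyond $w_0$ the ratio $-g'/g''$ keeps growing, so one line is at the boundary $\pi=1$. Re-optimising the other coordinate given that boundary is a one-dimensional concave problem, and I must show it does not drift away from $w_0/w_i$, or otherwise rule out a profitable deviation. I would first try to prove this by a direct first-order-condition computation in the quadratic form, combined with the ordering $w_0=\min\{w_1,w_2\}$ and the Sharpe-ratio condition.
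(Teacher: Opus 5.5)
Your Steps 1--2 follow the paper's construction in Appendix~\ref{app: theorem no model unc}: power ansatz, $w_0=\min\{w_1,w_2\}$, frozen retentions with roots $\gamma_{2\pm}$, $\mathrm C^1$ pasting at $w_0$ forcing $c_\pm\propto\gamma_{2\mp}$, then $g''(b^*)=0$ and $g'(b^*)=a_2$. Reading $g(b^*)=a_2N_2/\delta$ off the ODE at $b^*$ is a neater route to the paper's $K_3=N_2/\delta$.

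\textbf{The gap.} The two points you flag are not merely delicate. Both are false whenever $w_1\neq w_2$, so your verification cannot be completed.

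\emph{Above the barrier.} For $x\ge b^*$ you have $g'=a_2$ and $g''=0$, so the operator is maximised at full retention:
\[
\sup_{\pi\in[0,1]^2}\Lc^{(\pi_1,\pi_2,0,0)}g(x)=a_2(\mu_1+\mu_2)-\delta g(x)=a_2(\mu_1+\mu_2-N_2)-\delta a_2(x-b^*).
\]
This is strictly positive on $\bigl(b^*,\,b^*+(\mu_1+\mu_2-N_2)/\delta\bigr)$, because $N_2=\mu_1\frac{w_0}{w_1}+\mu_2\frac{w_0}{w_2}<\mu_1+\mu_2$ unless $w_1=w_2$. With the parameters of Table~\ref{tab:param} and $\beta_1=\beta_2=0$:
\begin{itemize}
\item $w_0=w_1\approx0.576$, matching Table~\ref{tab:MUComp w0 bstar};
\item $w_0/w_2\approx0.409$ and $N_2\approx4.82<6$;
\item hence the HJB fails on roughly $(1.63,\,4.0)$.
\end{itemize}
Your fallback cannot help: once $g''=0$, nothing penalises retention, so full retention strictly dominates.

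\emph{Between $w_0$ and $b^*$.} The same failure occurs on $(w_0,b^*)$. If $w_0=w_1$, then
\[
\partial_{\pi_2}\Lc^{(1,\pi_2,0,0)}g(x)\Big|_{\pi_2=w_0/w_2}=-g''(x)\Bigl[\mu_2\frac{g'(x)}{-g''(x)}-\sigma_2^2\frac{w_0}{w_2}-\rho\sigma_1\sigma_2\Bigr].
\]
This vanishes at $x=w_0$ and is strictly positive on $(w_0,b^*)$. The reason is that $g'/(-g'')$ increases strictly from $w_0/(1-\gamma_1)$ to $+\infty$ at $b^*$: with $s=e^{(\gamma_{2+}-\gamma_{2-})(x-w_0)}$, one has $-g''/g'=-(\gamma_{2+}s+\gamma_{2-})/(s+1)$, which decreases in $s$. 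So the unsaturated line's retention does drift upward, exactly as you feared, and $\sup_\pi\Lc^{\vartheta}g>0$ there too.

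\textbf{The paper's proof has the same gap.} It freezes the retentions on $[w_0,b^*)$ via Lemma~\ref{lemma: pi remain constant}. That lemma's proof uses $\bar\pi_2(x)=x/w_2$ and $x=\bar\pi_1(x)w_1$ for $x\ge w_0$, outside the power region where those formulas were derived. The paper also never checks the HJB inequality for $x>b^*$.

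Consequently, for $w_1\ne w_2$ the displayed $g$ is not a classical solution of \eqref{eqn: hjb no model unc}, and in fact $V>g$ just above $b^*$. Use $\pi=(1,1)$ with no dividends until the reserve pair leaves a small neighbourhood (exit time $\tau_h$), then follow the paper's barrier strategy, whose value is $g$. This yields
\[
g(x)+\mathbb E\Bigl[\int_0^{\tau_h}e^{-\delta s}\Lc^{(1,1,0,0)}g(X(s))\,\dd s\Bigr]>g(x).
\]

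\textbf{What is missing.} An additional regime is needed:
\begin{itemize}
\item Past $w_0$, the saturated line keeps $\pi_i=1$.
\item The other line follows the one-dimensional first-order condition $\pi_j=\bigl(\mu_jg'/(-g'')-\rho\sigma_1\sigma_2\bigr)/\sigma_j^2$, which gives a nonlinear ODE for $g$. The KKT condition for $\pi_i=1$ persists by the Sharpe-ratio assumption.
\item This continues until $\pi_j$ reaches $1$ at a second threshold below $b^*$.
\item From there to the barrier, $\pi=(1,1)$, the ODE has coefficients $\frac12(\sigma_1^2+2\rho\sigma_1\sigma_2+\sigma_2^2)$ and $\mu_1+\mu_2$, and $g(b^*)=a_2(\mu_1+\mu_2)/\delta$.
\end{itemize}
Only in the knife-edge case $w_1=w_2$ does the construction in your proposal, and in the paper, go through as stated.
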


\begin{remark}
    The analysis for the other cases involving the correlation coefficient follows similarly as those in Section \ref{subsec: remaining case}. Moreover, the results presented in Theorem \ref{theorem no uncertainty} are analogous to the results in the univariate framework presented in \cite{hojgaard1999}.
\end{remark}

\section{Numerical Illustrations}\label{sec:numerical}


In this section, we provide numerical illustrations of the analytical solutions established above. The examples below focus on the effect of model uncertainty (via the ambiguity-aversion parameters) on the insurer's optimal strategies and how model uncertainty interacts with the riskiness of each business line's reserve process (measured by $\sigma_1$ and $\sigma_2$) in influencing the optimal strategies. Unless otherwise stated, all numerical illustrations are generated using the parameter values specified in Table \ref{tab:param}. 

\begin{table}[hbt]
    \caption{Base parameter values used for all numerical illustrations.}
    \label{tab:param}
    \centering
    \begin{tabular}{@{}cccccc@{}}
    \toprule
    $\mu_1$ & 4.00 & $\mu_2$ & 2.00 & $\rho$ & 0.60 \\
    $\sigma_1$ & 1.50 & $\sigma_2$ & 1.00 & $\delta$ & 0.50 \\
    $a_2$ & 0.70 & $\beta_1$ & 1.00 & $\beta_2$ & 1.00 \\ \bottomrule
    \end{tabular}
\end{table}

Figures \ref{fig:Th31 illustration} and \ref{fig:Th33 illustration} show the value function $g(x)$, optimal reinsurance strategies $\bar{\pi}_1(x)$ and $\bar{\pi}_2(x)$, and the optimal distortion strategies $\bar{\theta}_1(x)$ and $\bar{\theta}_2(x)$ as a function of the (total) reserve $x = X^{u^*}(t)$ under Theorems \ref{theorem 1} and \ref{theorem 2}, respectively. As expected, the value function in both cases is strictly increasing and concave with respect to the value of the underlying reserve process. To ensure the condition $N_1 = N_3$ in Theorem \ref{theorem 2} is satisfied, we set $\rho = 0$ given the parameters specified in Table \ref{tab:param}. In this case, the results in Figures \ref{fig:Th31 illustration} and \ref{fig:Th33 illustration} also illustrate the effect of a nonzero correlation when all other parameters are fixed.  In addition to the optimal distortion strategies, we also show the rate of change of the relative entropy of the worst-case probability measure $\mathbb{Q}^{\theta^*}$ with respect to $\mathbb{P}$.

In both figures, the insurer tends to linearly increase the proportion of risk retained for both lines as the underlying reserve process value increases. In the illustration of Theorem \ref{theorem 1} in Figure \ref{fig:Th31 illustration}, once the reserve process reaches a value of $w_0$ (from below), the insurer will retain all the risks associated with Line 1 and will cease ceding additional risk from Line 2 to the reinsurer. However, under the conditions of Theorem \ref{theorem 2} illustrated in Figure \ref{fig:Th33 illustration}, the threshold $w_0$ is smaller, indicating that the insurer will completely absorb the risk at a lower reserve level. In contrast to Figure \ref{fig:Th31 illustration}, the insurer in Figure \ref{fig:Th33 illustration} will retain all the risks associated with Line 2. Furthermore, the insurer will absorb a greater proportion of the reinsured line in Figure \ref{fig:Th33 illustration} (Line 1) compared to Figure \ref{fig:Th31 illustration} (Line 2). The value of $b^*$ is also smaller in Figure \ref{fig:Th33 illustration} compared to Figure \ref{fig:Th31 illustration}, indicating that the insurer will decide to pay out dividends and transfer capital from one line to another for a smaller value of the underlying reserve process.

\begin{figure}[hbt]
    \centering
    \begin{subfigure}[H]{0.30\textwidth}
        \centering
        \includegraphics[width=\linewidth]{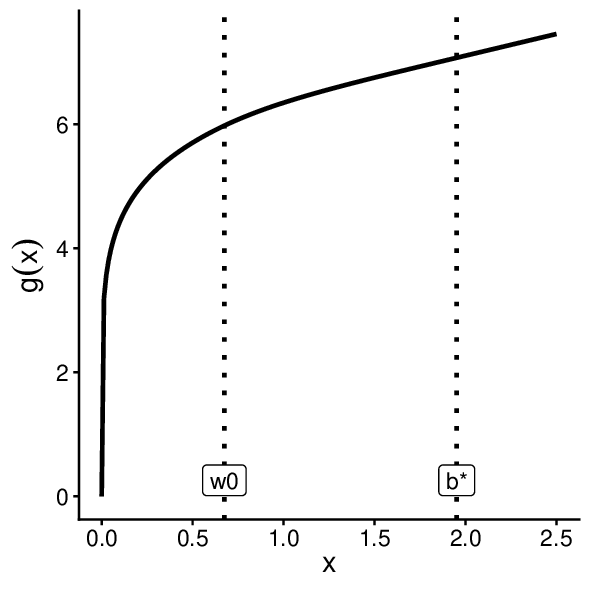} 
    \end{subfigure}
    \quad
    \begin{subfigure}[H]{0.30\textwidth}
        \centering
        \includegraphics[width=\linewidth]{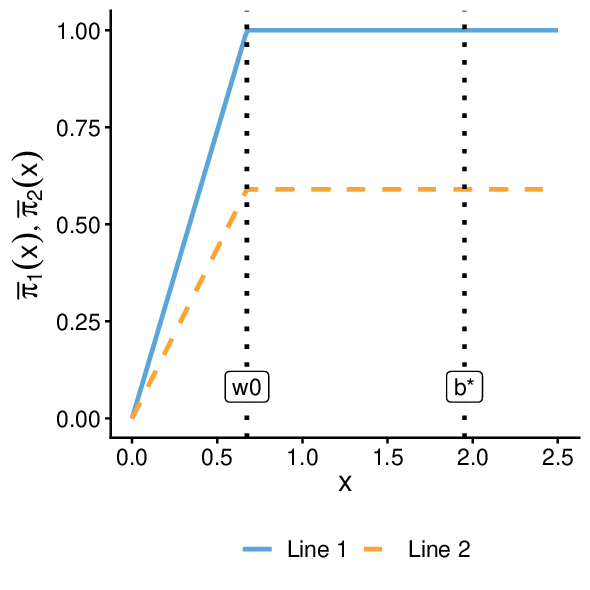}
    \end{subfigure}
    \quad
    \begin{subfigure}[H]{0.30\textwidth}
        \centering
        \includegraphics[width=\linewidth]{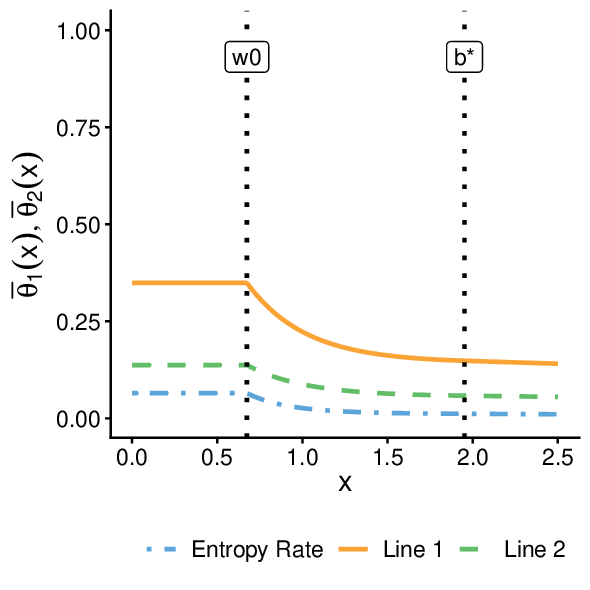}
    \end{subfigure}
    \\[-2ex]
    \caption{Value function $g(x)$, optimal reinsurance strategies $\bar{\pi}_1(x), \bar{\pi}_2(x)$, and optimal distortion strategies $\bar{\theta}_1(x), \bar{\theta}_2(x)$ as a function of the (total) reserve $x$ when $\rho = 0.60$. This parameter configuration, corresponding to Theorem \ref{theorem 1}, yields $w_0 = 0.67 < b^* = 1.95$.}
    \label{fig:Th31 illustration}
\end{figure}

\begin{figure}[hbt]
    \centering
    \begin{subfigure}[H]{0.30\textwidth}
        \centering
        \includegraphics[width=\linewidth]{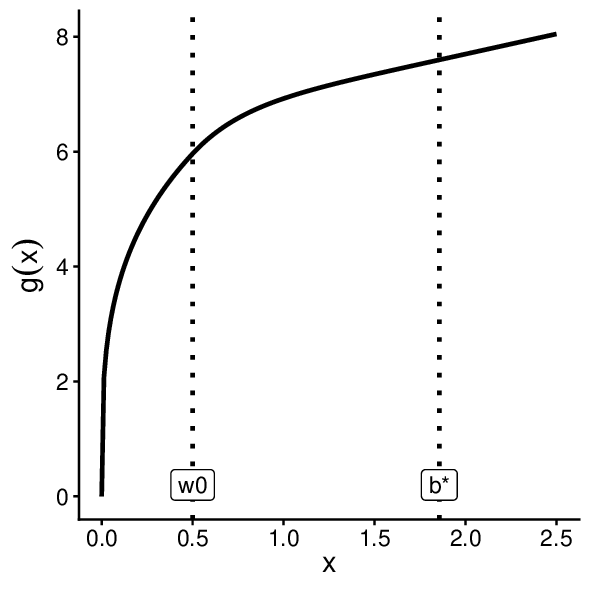} 
    \end{subfigure}
    \quad
    \begin{subfigure}[H]{0.30\textwidth}
        \centering
        \includegraphics[width=\linewidth]{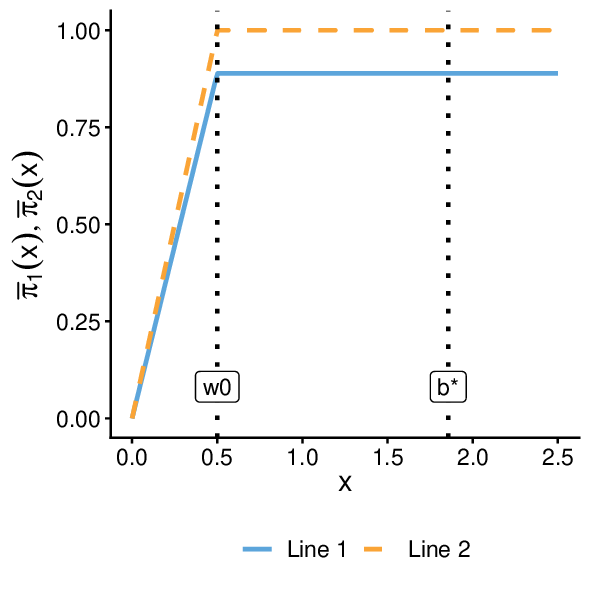}
    \end{subfigure}
    \quad
    \begin{subfigure}[H]{0.30\textwidth}
        \centering
        \includegraphics[width=\linewidth]{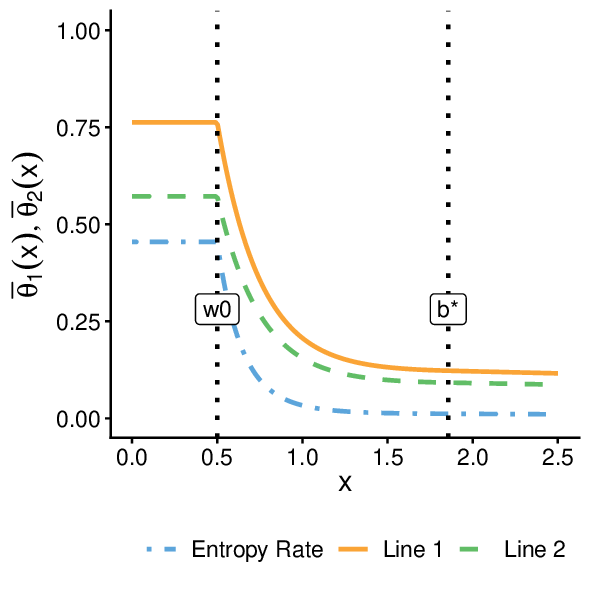}
    \end{subfigure}
    \\[-2ex]
    \caption{Value function $g(x)$, optimal reinsurance strategies $\bar{\pi}_1(x), \bar{\pi}_2(x)$, and optimal distortion strategies $\bar{\theta}_1(x), \bar{\theta}_2(x)$ as a function of the (total) reserve $x$ when $\rho = 0$. This parameter configuration, corresponding to Theorem \ref{theorem 2}, yields $w_0 = 0.50 < b^* = 1.86$.}
    \label{fig:Th33 illustration}
\end{figure}

The optimal distortion processes $\theta^*_1(t) = \bar{\theta}_1(X^{u^*}(t))$ and $\theta^*_2(t) = \bar{\theta}_2(X^{u^*}(t))$ determine how far the worst-case probability measure $\mathbb{Q}^{\theta^*}$ is from the reference probability measure $\mathbb{P}$. Since the distance, measured by the relative entropy of $\mathbb{Q}^{\theta^*}$ with respect to $\mathbb{P}$, is obtained by integrating the optimal distortion processes over time, we instead investigate the rate of change of the relative entropy, which
is given by \[\frac{\dd}{\dd t} \mathbb{E}^{\mathbb{Q}^{\theta^*}} \bigg[\log\bigg(\frac{\dd \mathbb Q^{\theta^*}}{\dd\mathbb P}\bigg|_{\mathcal F_t} \bigg)\bigg] = \frac{1}{2} \bigg[(\bar{\theta}_1(X^{u^*}(t))^2 + \frac{(\bar{\theta}_2(X^{u^*}(t)) - \rho \bar{\theta}_1(X^{u^*}(t)))^2}{1 - \rho^2} \bigg].\] The rate of change allows us to examine how fast the worst-case model $\mathbb{Q}^{\theta^*}$ is moving away from the reference model $\mathbb{P}$ with respect to the current value of the underlying reserve process.


In both Figures \ref{fig:Th31 illustration} and \ref{fig:Th33 illustration}, the rate of change is constant as long as the reserve process is less than $w_0$. Once the reserve process exceeds $w_0$ and the insurer has absorbed all the risk from at least one line, the rate of change of the relative entropy monotonically decreases as the reserve process increases. This indicates that the relative entropy becomes less sensitive to further increases in the underlying reserve value. Under the conditions of Theorem \ref{theorem 2}, the optimal distortion processes and the rate of change of relative entropy are greater than their corresponding values under the conditions of Theorem \ref{theorem 1}, indicating that a strictly positive $\rho$ can result in a worst-case model that moves faster away from the reference model (assuming all other model parameters are the same). However, these results do not quantify the actual relative entropy, or distance, between the worst‑case model and the reference model without knowledge of the trajectory of the reserve process $\{X^{u^*}(t)\}_{t \geq 0}$ under the optimal strategies.

\begin{figure}[hbt]
    \centering
    \begin{subfigure}[H]{0.35\textwidth}
        \centering
        \includegraphics[width=\linewidth]{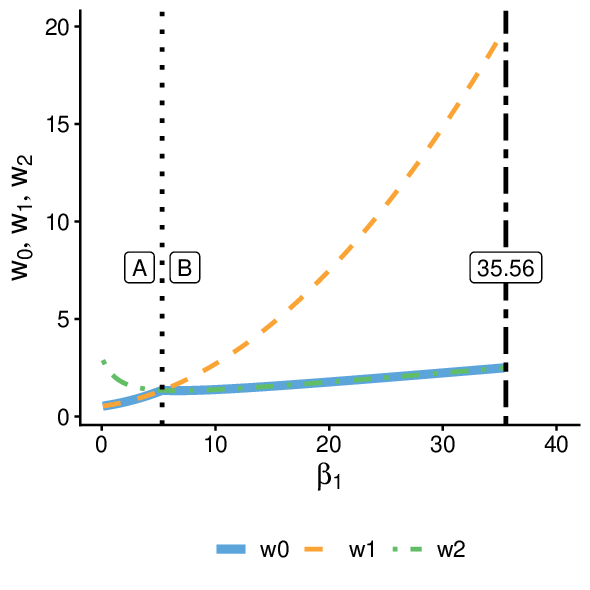} 
        \caption{$\beta_2 = 5$ fixed}
        \label{fig:beta2 fixed}
    \end{subfigure}
    \qquad
    \begin{subfigure}[H]{0.35\textwidth}
        \centering
        \includegraphics[width=\linewidth]{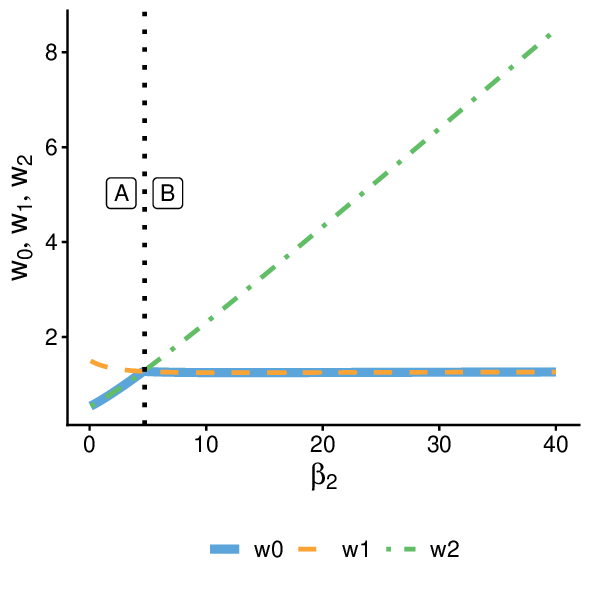}
        \caption{$\beta_1 = 5$ fixed}
        \label{fig:beta1 fixed}
    \end{subfigure}
    \caption{Reinsurance threshold $w_0$ and values of $w_1$ and $w_2$ as a function of $\beta_i$ when $\beta_{3-i} = 5$ is fixed. In Figure \ref{fig:beta2 fixed} (resp. Figure \ref{fig:beta1 fixed}), the region A to the left of the dotted graph represents the values of $\beta_1$ (resp. $\beta_2$) for which $w_2 > w_1$ (resp. $w_1 > w_2$). The region B to the right of the dotted graph represents values of $\beta_1$ (resp. $\beta_2$) for which $w_1 > w_2$ (resp. $w_2 > w_1$). When $\beta_1$ is variable, the dot-dashed graph indicates the upper bound on $\beta_1$ up to which $w_0$ exists.}
    \label{fig:w sensitivity}
\end{figure}

Figure \ref{fig:w sensitivity} illustrates the zero-reinsurance threshold $w_0$ and the values of $w_1$ and $w_2$ as a function of the (scaled) robustness preference/ambiguity-aversion parameter $\beta_i$ of Line $i$ when $\beta_{3-i} = 5$ is fixed, for $i=1,2$. Note that the value at which $\beta_i$ is fixed is chosen to illustrate the switching of $w_0$ between $w_1$ and $w_2$ as $\beta_{3-i}$ varies. In Figure \ref{fig:beta2 fixed} (resp. Figure \ref{fig:beta1 fixed}), the dotted graph represents the point at which $w_0$ switches from the value of $w_1$ to $w_2$ (resp. $w_2$ to $w_1$). In Figure \ref{fig:beta2 fixed}, when Line 1 exhibits a relatively low ambiguity aversion, we have $w_0 = w_1$. This means that when the current reserve value has exceeded the threshold $w_0$, Line 1 retains its entire risk, whereas Line 2 cedes a proportion $1 - \frac{w_0}{w_2}$ of its risk to the reinsurer.
However, when Line 1 has a high degree of ambiguity aversion, we have $w_0 = w_2$. This means that if the current reserve exceeds the threshold $w_0$, Line 2 retains its entire risk, while Line 1 cedes a proportion $1 - \frac{w_0}{w_1}$ of the line's risk. In other words, Line 1 prefers to absorb some of its risk when the degree of trust in the reference model is low. The same interpretation applies to the results shown in Figure \ref{fig:beta1 fixed}. 


Similarly, Figure \ref{fig:bstar sensitivity} illustrates the impact of model uncertainty (via the ambiguity-aversion parameters) on the optimal barrier $b^*$ for the dividend payout and capital injection strategies. In Figure \ref{fig:bstar}, as seen also in Figure \ref{fig:beta2 fixed}, there exists a nonzero $b^*$ only up to a certain value of $\beta_1$; given the parameter values assumed, the upper bound on $\beta_1$ is approximately 35.56. The behavior of $b^*$ as either $\beta_1$ or $\beta_2$ increases is notably non-monotonic (note that this behavior is also apparent in Figure \ref{fig:contour bstar} below). With $\beta_2 = 5$ fixed, the barrier increases as $\beta_1$ approaches $\beta_2$ (from below); that is, if the two lines have approximately the same degree of ambiguity aversion, the insurer is willing to pay dividends past a higher level of reserves. However, if there is a large discrepancy between the ambiguity aversion of the two lines, the insurer is likely to pay dividends at a lower reserve level. In the more extreme case of misalignment when $\beta_1$ exceeds a value of 35.56, it will be optimal for the insurer to pay out dividends immediately. As mentioned above, given the assumed parameter values, a nonzero value of $b^*$ exists for any $\beta_2$ and so it is optimal for the insurer to pay dividends only when the reserve exceeds $b^*$. We also note in Figure \ref{fig:ratio} that as $\beta_1$ approaches the upper bound, the values of $b^*$ and $w_0$ approach one another, since the ratio $w_0 / b^*$ approaches 1. This means that if Line 1 becomes more ambiguity averse (up to the computed upper bound), it is optimal for the insurer to engage in zero reinsurance for at least one line and to pay dividends when the underlying reserve value exceeds the same quantity.

\begin{figure}[hbt]
    \centering
    \begin{subfigure}[H]{0.35\textwidth}
        \centering
        \includegraphics[width=\linewidth]{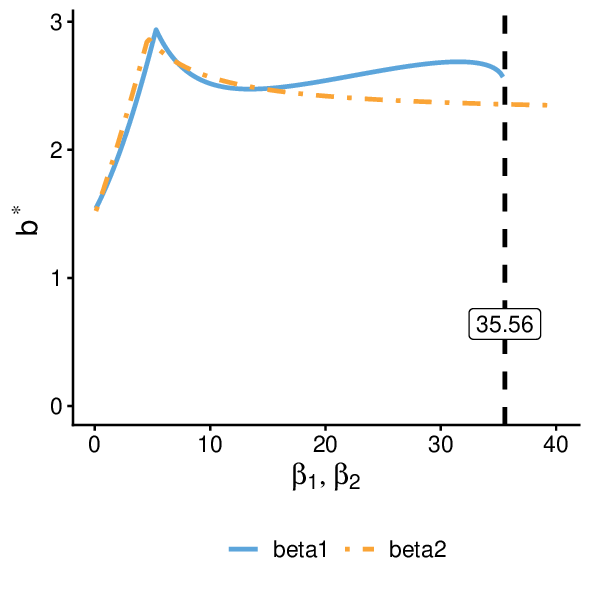} 
        \caption{$b^*$}
        \label{fig:bstar}
    \end{subfigure}
    \qquad
    \begin{subfigure}[H]{0.35\textwidth}
        \centering
        \includegraphics[width=\linewidth]{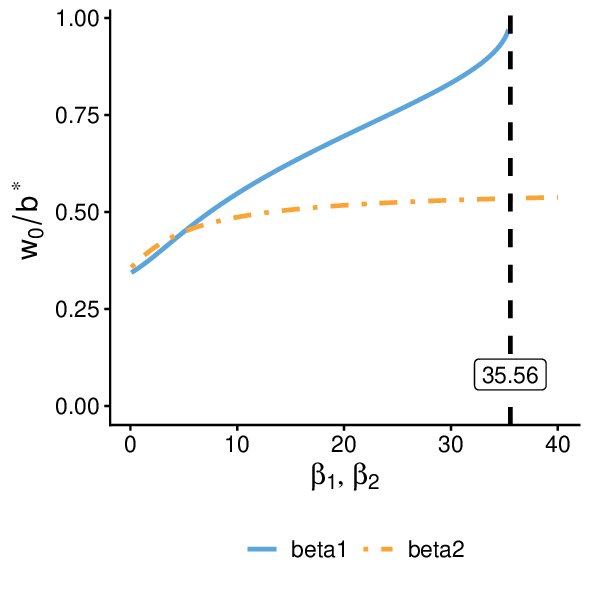}
        \caption{$w_0 / b^*$}
        \label{fig:ratio}
    \end{subfigure}
    \caption{The barrier $b^*$ corresponding to the optimal dividend-capital injection strategy and the ratio $w_0 / b^*$ as a function of $\beta_i$ when $\beta_{3-i} = 5$ is fixed, for $i=1,2$. The legend indicates which parameter is variable. When $\beta_1$ is variable, the dot-dashed graph indicates the upper bound on $\beta_1$ up to which either $w_0$ exists or $b^*$ is nonzero.}
    \label{fig:bstar sensitivity}
\end{figure}




\begin{figure}[htb]
    \centering
        \begin{subfigure}[H]{0.35\textwidth}
            \centering
            \includegraphics[width=\linewidth]{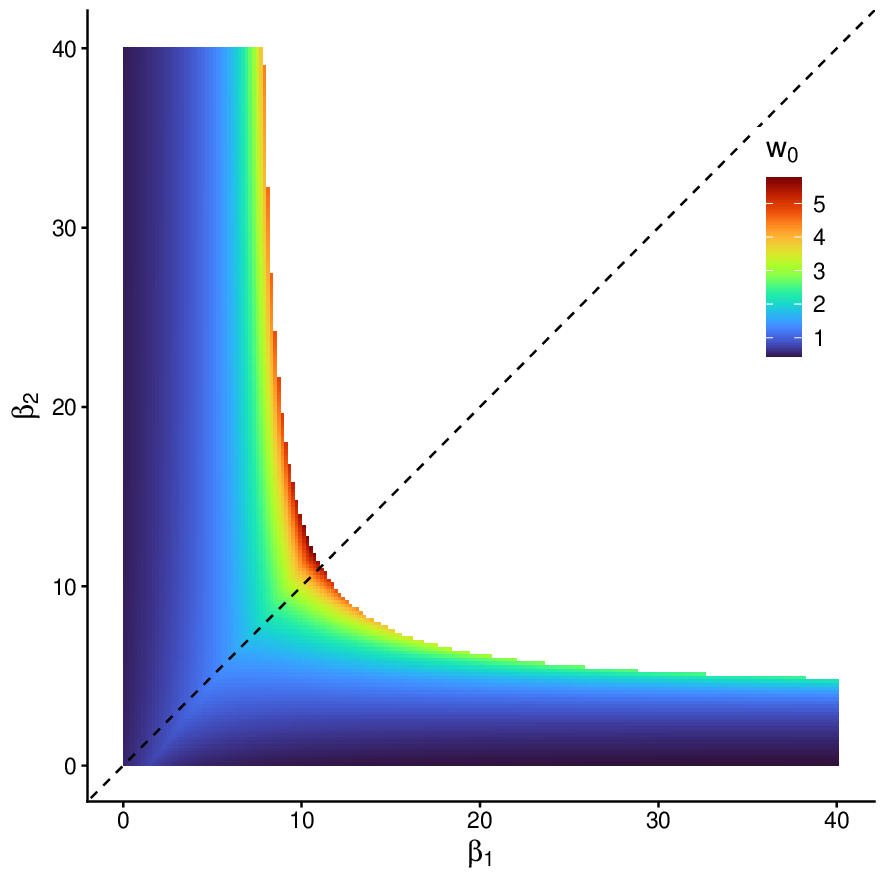} 
            \caption{$w_0$}
        \end{subfigure}
        \qquad
        \begin{subfigure}[H]{0.35\textwidth}
            \centering
            \includegraphics[width=\linewidth]{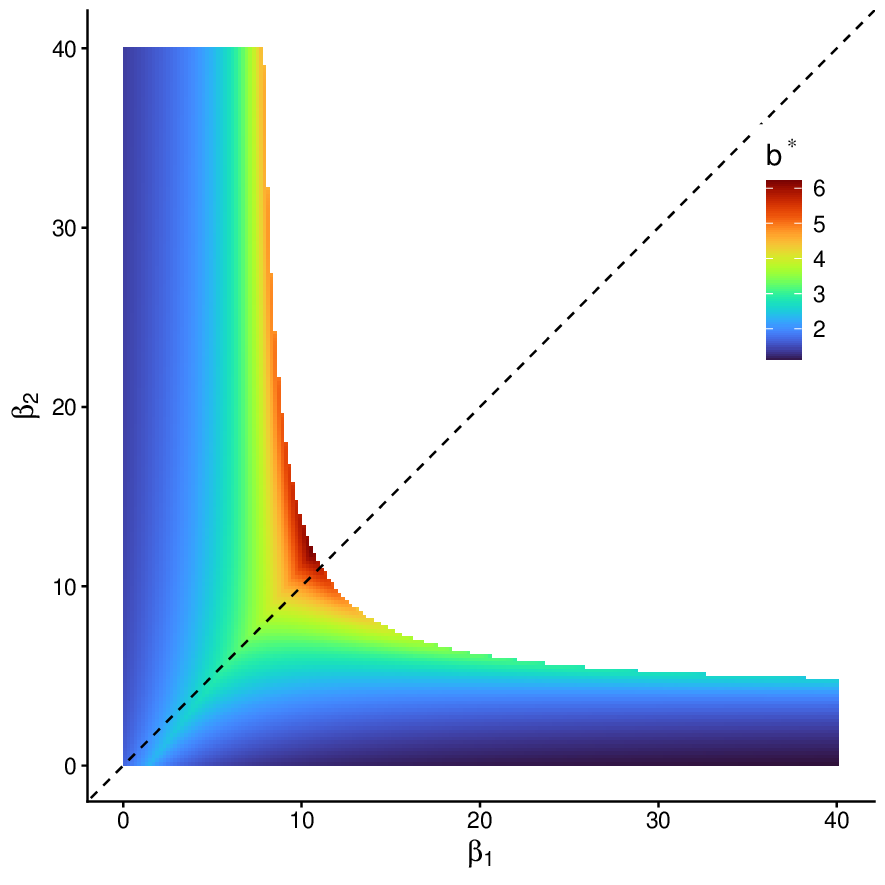}
            \caption{$b^*$}
            \label{fig:contour bstar}
        \end{subfigure}
        \caption{Contour plot of the reinsurance threshold $w_0$ and the optimal barrier $b^*$ as a function of the ambiguity-aversion parameters $\beta_1$ and $\beta_2$. Unshaded portions represent the regions where $w_0$ does not exist or where $b^* = 0$. 
        }
    \label{fig:contour}
\end{figure}

Figure \ref{fig:contour} provides insights on how the zero-reinsurance threshold $w_0$ and optimal barrier $b^*$ behave with respect to both $\beta_1$ and $\beta_2$. We note that $w_0$ tends to be insensitive to changes in the ambiguity aversion of Line $i$ if the ambiguity aversion of Line $3-i$ is low ($i=1,2$). This means that any asymmetry in the lines' ambiguity aversion, if one of the lines is not too ambiguity averse, does not significantly influence the insurer's decision to cease reinsurance for at least one line. In the same situation, as seen in Figure \ref{fig:bstar}, the optimal barrier tends to peak when the ambiguity-aversion parameters align, but slightly decreases as the discrepancy increases. However, if Line $i$ is already highly ambiguity averse and if the ambiguity aversion of Line $3-i$ increases, it becomes more likely that the insurer will retain all risks from at least one line or pay dividends immediately. Both $w_0$ and $b^*$ tend to rapidly increase when $\beta_1$ and $\beta_2$ increase together (i.e., along the $45^\circ$ line), indicating that the insurer, as a whole, will consider ceasing reinsurance or paying out dividends at a higher level of reserves if \textit{both} lines become more ambiguity averse.

\begin{figure}[htb]
    \centering
    \begin{subfigure}[]{0.3\textwidth}
        \centering
        \includegraphics[width=\linewidth]{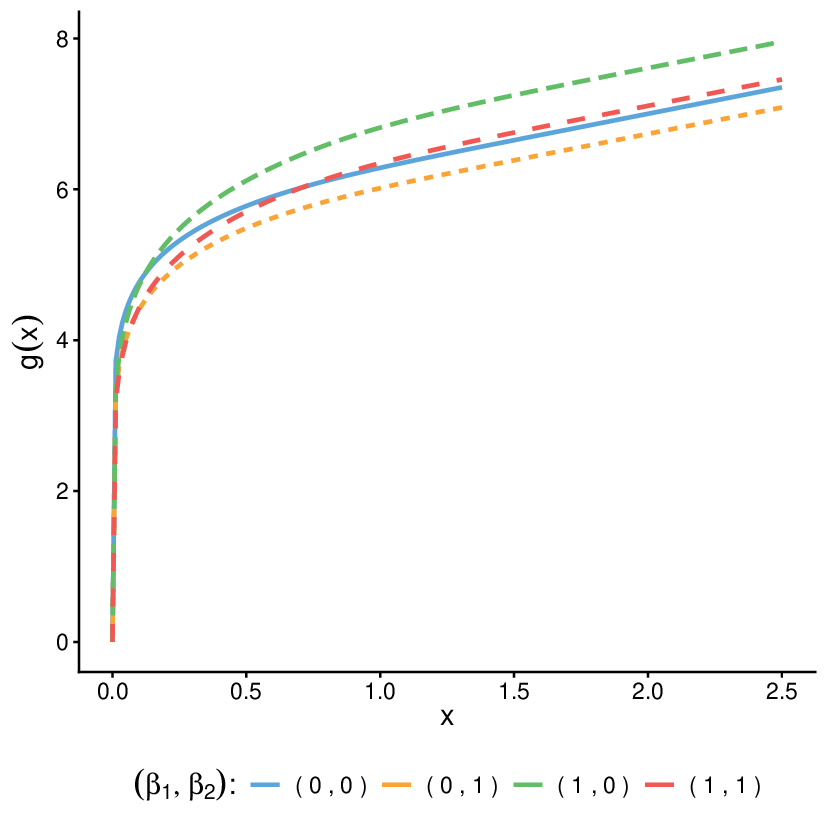} 
    \end{subfigure}
    \quad
    \begin{subfigure}[]{0.3\textwidth}
        \centering
        \includegraphics[width=\linewidth]{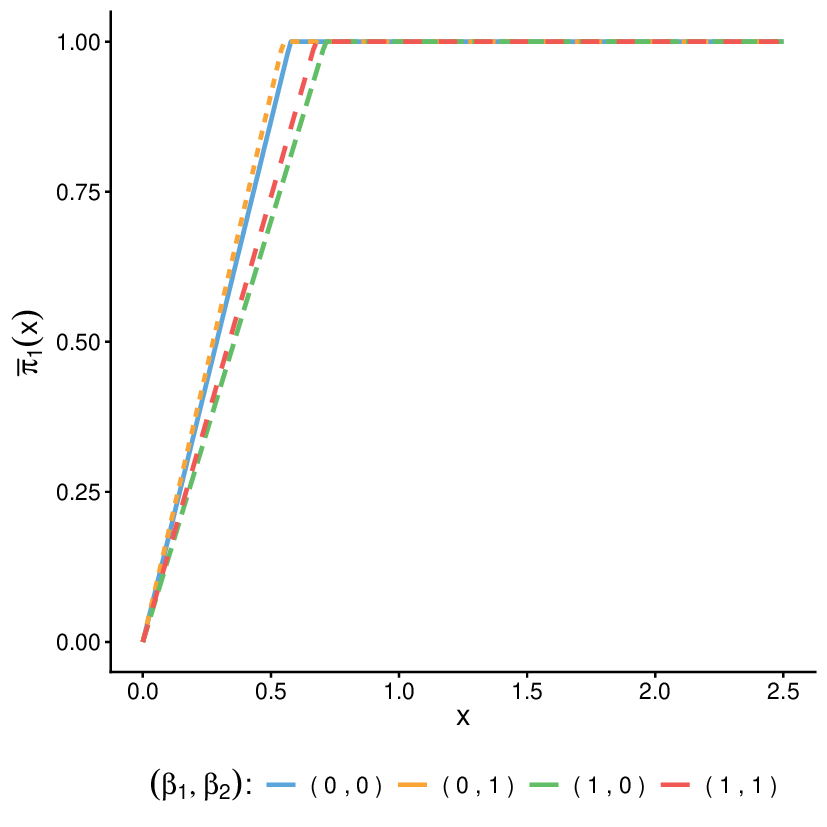}
    \end{subfigure}
    \quad
    \begin{subfigure}[]{0.3\textwidth}
        \centering
        \includegraphics[width=\linewidth]{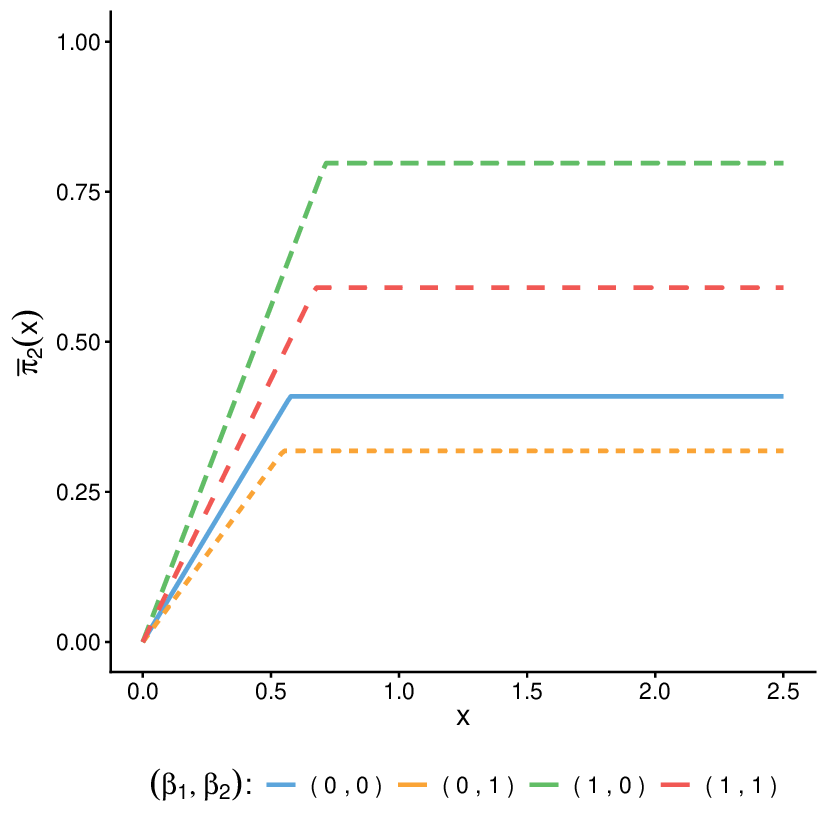}
    \end{subfigure}
    \caption{Value function $g(x)$ and optimal reinsurance strategies $\bar{\pi}_1(x), \bar{\pi}_2(x)$ for various values of $\beta_1$ and $\beta_2$, illustrating the impact of model uncertainty.}
    \label{fig:MUComp}
\end{figure}

\begin{table}[ht]
    \caption{Impact of the ambiguity-aversion parameter (and model uncertainty) on the reinsurance threshold $w_0$ and the optimal barrier $b^*$.}
    \label{tab:MUComp w0 bstar}
    \centering
    \begin{tabular}{cccc}
    \toprule
    $\beta_1$ & $\beta_2$ & $w_0$ & $b^*$\\
    \midrule
    0 & 0 & 0.5762609 & 1.634825\\
    1 & 0 & 0.7133303 & 2.118944\\
    0 & 1 & 0.5479603 & 1.641018\\
    1 & 1 & 0.6744453 & 1.951939\\
    \bottomrule
    \end{tabular}
\end{table}

Next, we examine the value function and the optimal reinsurance strategies under four scenarios: (i) both lines are ambiguity neutral; (ii) and (iii) one line is ambiguity averse and the other line is ambiguity neutral; and (iv) both lines are ambiguity averse (see Figure \ref{fig:MUComp}). These results capture the impact of ignoring model uncertainty on the insurer's welfare (measured by the value function) and reinsurance strategies. In all four scenarios, it is optimal for the insurer to internalize 
all the risk of Line 1 once the underlying reserve value reaches $w_0$ (given in Table  \ref{tab:MUComp w0 bstar}). However, we note the asymmetry in the outcomes when exactly one line is ambiguity averse. This is primarily due to the assumed values of $\mu_i$ and $\sigma_i$, so the insurer's decision to cease reinsurance or pay out dividends under model uncertainty is influenced by the specification of each line's reserve process.\footnote{We note that there is no way to know a priori which scenario will yield the highest value function based on $\mu_i$ and $\sigma_i$.} When $\mu_1 = \mu_2$ and $\sigma_1 = \sigma_2$, then the outcomes in scenarios (ii) and (iii) are identical (see Appendix \ref{sec:MU comp b}). Indeed, the results in Appendix \ref{sec:MU comp b} are consistent with those of \citet{feng2021} who consider the univariate case; due to making decisions under the worst-case scenario when both lines (i.e., the insurer as a whole) are ambiguity averse (and hence taking on a more conservative strategy), there is a decrease in the overall value function compared to when decisions are made under no ambiguity aversion. Thus, these results highlight that in the multivariate setting, the optimal strategies can differ drastically given a more granular approach to managing model risk.

\begin{figure}[htb]
    \centering
        \begin{subfigure}[H]{0.35\textwidth}
            \centering
            \includegraphics[width=\linewidth]{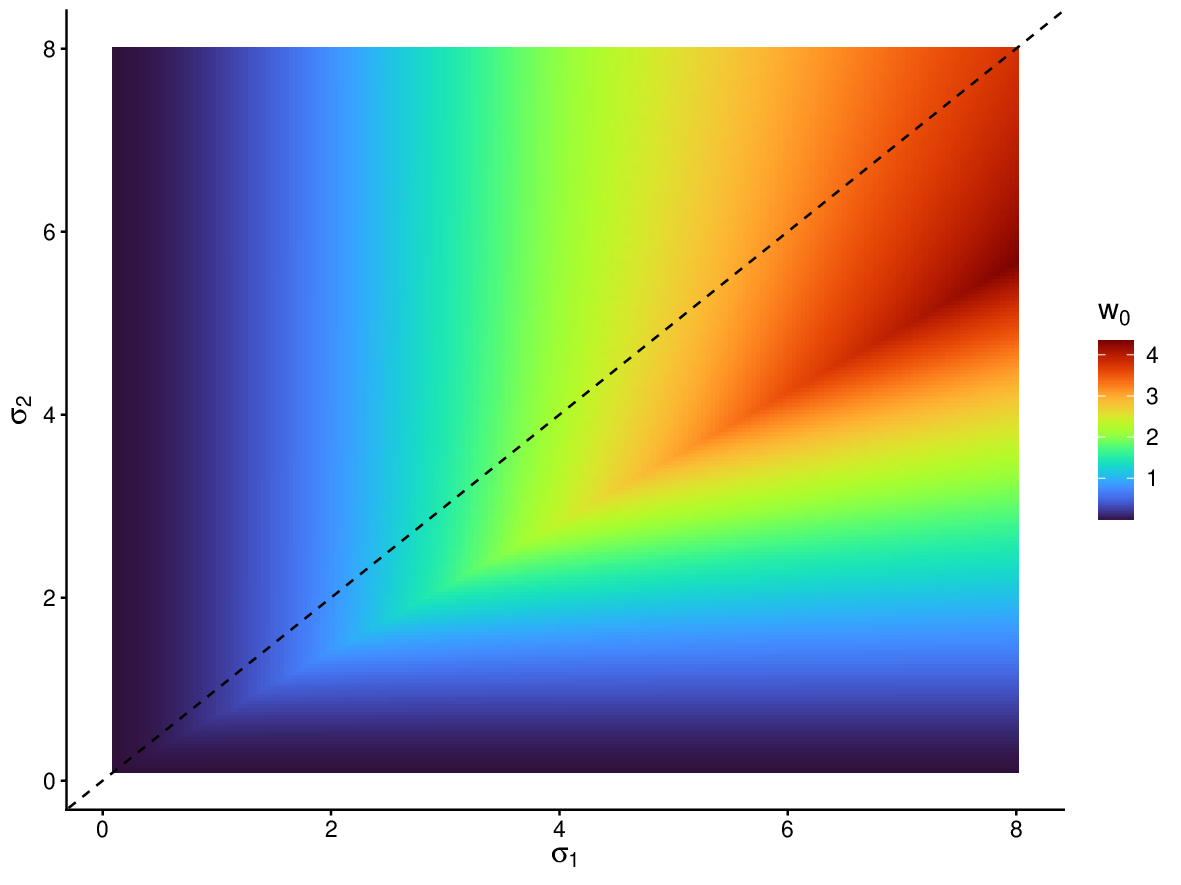} 
            \caption{$w_0$}
        \end{subfigure}
        \qquad
        \begin{subfigure}[H]{0.35\textwidth}
            \centering
            \includegraphics[width=\linewidth]{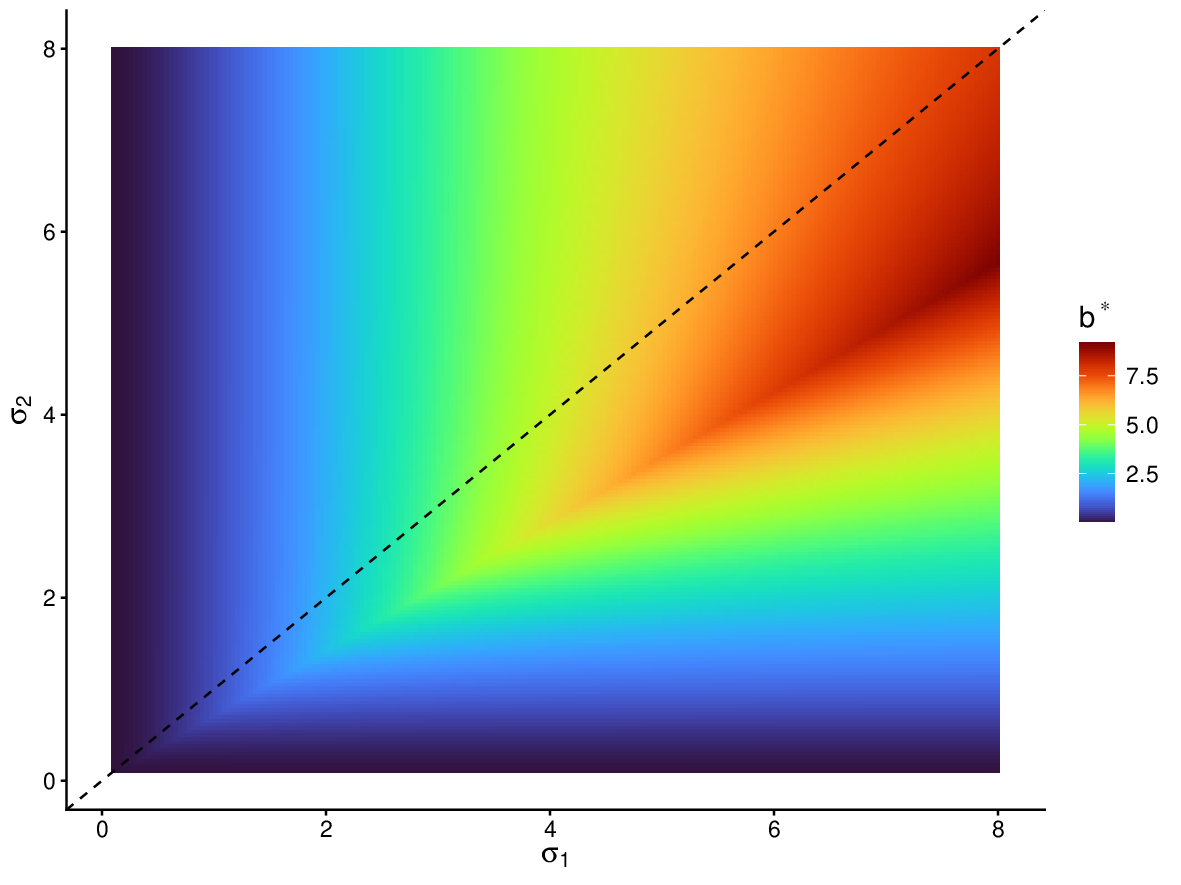}
            \caption{$b^*$}
        \end{subfigure}
        \\
        \caption{Contour plot of the reinsurance threshold $w_0$ and the optimal barrier $b^*$ as a function of $\sigma_1$ and $\sigma_2$ when $\beta_1 = \beta_2 = 0$ (i.e., both lines are ambiguity neutral). Here, we assume $\rho = 0$ to ensure the conditions of Theorem \ref{theorem no uncertainty} are satisfied. 
        }
    \label{fig:contour_sigma_NU}
\end{figure}

\begin{figure}[htb]
    \centering
        \begin{subfigure}[H]{0.35\textwidth}
            \centering
            \includegraphics[width=\linewidth]{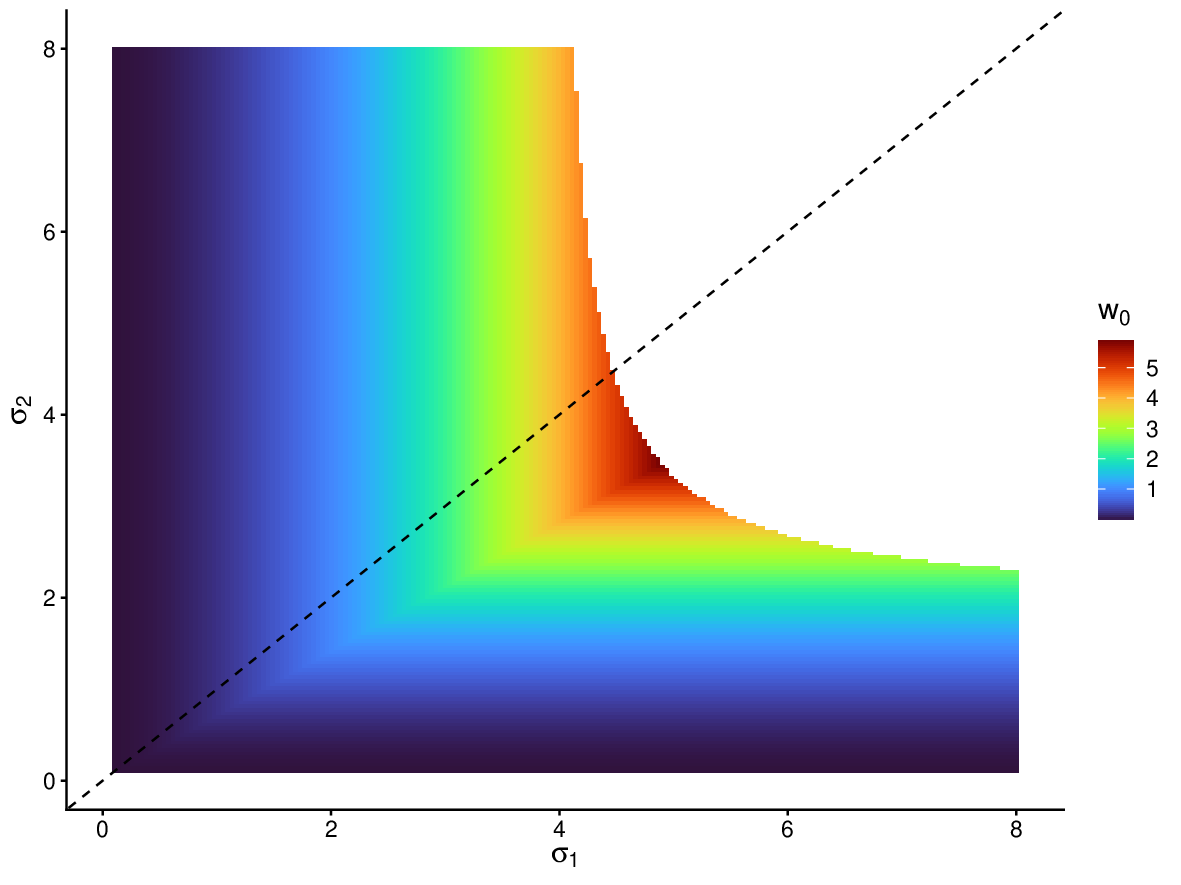} 
            \caption{$w_0$}
        \end{subfigure}
        \qquad
        \begin{subfigure}[H]{0.35\textwidth}
            \centering
            \includegraphics[width=\linewidth]{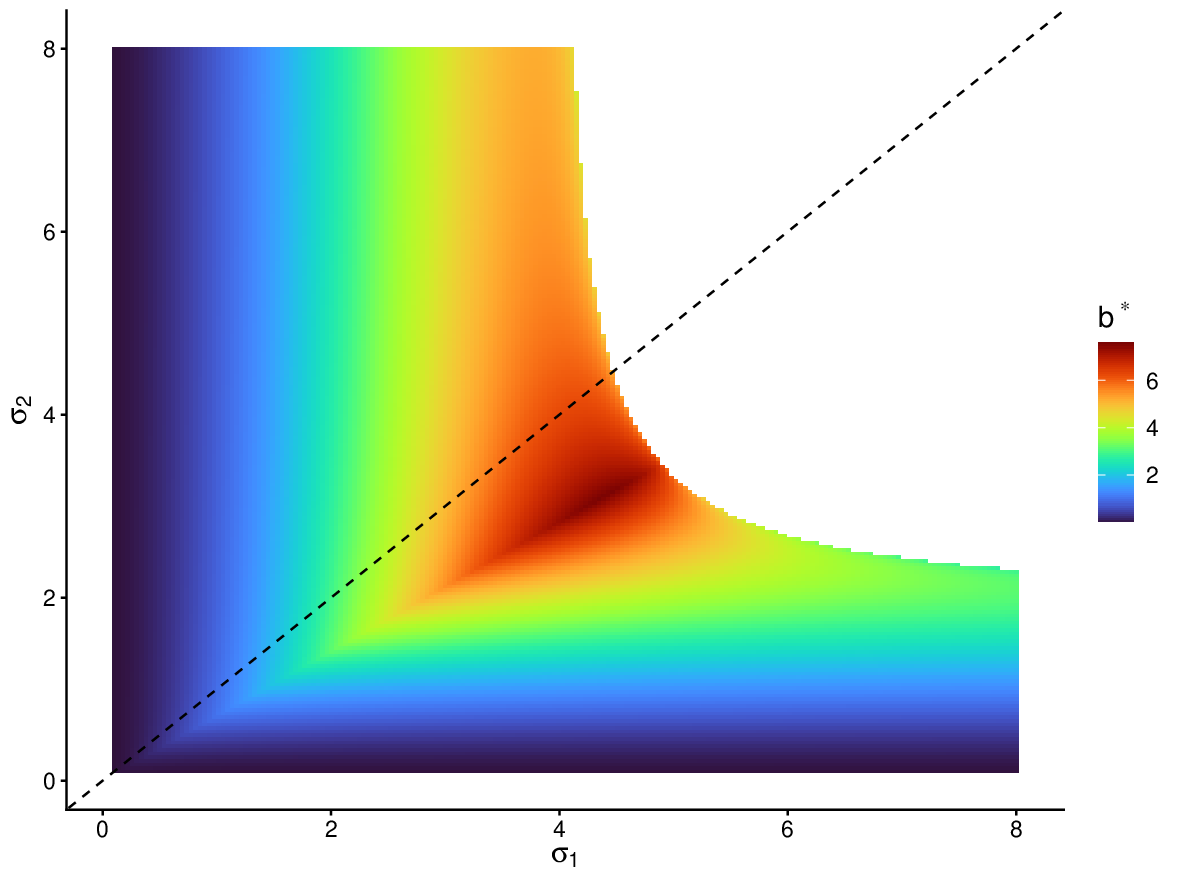}
            \caption{$b^*$}
            \label{fig:contour_sigma_bstar}
        \end{subfigure}
        \caption{Contour plot of the reinsurance threshold $w_0$ and the optimal barrier $b^*$ as a function of $\sigma_1$ and $\sigma_2$ when $\beta_1 = \beta_2 = 1$ (i.e., both lines are ambiguity averse). Here, we assume $\rho = 0$ to compare with Figure \ref{fig:contour_sigma_NU}. 
        }
    \label{fig:contour_sigma}
\end{figure}

Finally, Figures \ref{fig:contour_sigma_NU} and \ref{fig:contour_sigma} illustrate the interaction between ambiguity aversion and the riskiness of each line's reserve process (captured by $\sigma_1$ and $\sigma_2$) and how the interaction affects the reinsurance threshold $w_0$ and optimal barrier $b^*$ for the (optimal) dividend payout and capital injection strategies. When both lines are ambiguity neutral (i.e., $\beta_1 = \beta_2 = 0$), there exist nontrivial $w_0$ and $b^*$ for all values of $\sigma_1$ and $\sigma_2$ considered. Furthermore, the behavior of $w_0$ and $b^*$ with respect to $\sigma_2$ when $\sigma_1$ is fixed (say at $\sigma_1 = 2$) does not appear to be monotonic; $w_0$ and $b^*$ tend to increase as $\sigma_2$ increases from 0, but once a sufficiently high level of $\sigma_2$ is attained, the values of $w_0$ and $b^*$ tend to decrease as $\sigma_2$ increases beyond this level. In this case, high risk in both lines is managed through a combination of reinsurance, dividend payment, and capital injection. In contrast, when both lines are ambiguity averse ($\beta_1 = \beta_2 = 1$), Line 2 will immediately pay the aggregate reserve as dividends at time $t = 0$ and ruin will occur immediately after for sufficiently high levels of risk (refer to the blank region in Figure \ref{fig:contour_sigma_bstar}). Likewise, optimal reinsurance becomes irrelevant when the risk in both lines is sufficiently high. Thus, when the insurer exhibits some degree of distrust in the reference model, high risk in both lines’ reserve processes becomes a signal to pay dividends immediately.

\section{Proof of Main Results}\label{sec:proofs}

\subsection{Proof of Lemma \ref{lemma: gamma1 suff cond}}

    (Part I ``$\Longleftarrow$'') We first prove the ``if" direction. Suppose $\delta < \frac{\mu_1^2}{2\beta_1\sigma_1^2} + \frac{\mu_2^2}{2\beta_2\sigma_2^2}$, which can be rewritten as $\beta_2\mu_1^2\sigma_2^2+\beta_1\mu_2^2\sigma_1^2-2\delta\beta_1\beta_2\sigma_1^2\sigma_2^2>0$. We obtain $\psi(0)=-2\delta\sigma_1^2\sigma_2^2(1-\rho^2)^2<0$ and $\psi(1)=\beta_1\beta_2(\beta_2\mu_1^2\sigma_2^2+\beta_1\mu_2^2\sigma_1^2-2\delta\beta_1\beta_2\sigma_1^2\sigma_2^2)>0$, where $\psi(z)$ is defined in \eqref{eqn: defn of psi}. The intermediate value theorem implies the existence of the solution of \eqref{eqn: gamma1 equation} on $(0,1)$. 

    (Part II ``$\Longrightarrow$'') We prove the ``only if" direction using its contrapositive; that is, we show that \eqref{eqn: gamma1 equation} does not have a solution on $(0,1)$ if $\delta \geq \frac{\mu_1^2}{2\beta_1\sigma_1^2} + \frac{\mu_2^2}{2\beta_2\sigma_2^2}$. Suppose now that $\delta \geq \frac{\mu_1^2}{2\beta_1\sigma_1^2} + \frac{\mu_2^2}{2\beta_2\sigma_2^2}$, which is equivalent to $\beta_2\mu_1^2\sigma_2^2+\beta_1\mu_2^2\sigma_1^2-2\delta\beta_1\beta_2\sigma_1^2\sigma_2^2\leq 0$. It must be noted that 
    \begin{equation}
        0\geq \beta_2\mu_1^2\sigma_2^2+\beta_1\mu_2^2\sigma_1^2-2\delta\beta_1\beta_2\sigma_1^2\sigma_2^2 = \beta_2\sigma_2^2(\mu_1^2-2\delta \beta_1\sigma_1^2)+\beta_1\mu_2^2\sigma_1^2 = \beta_1\sigma_1^2(\mu_2^2-2\delta\beta_2\sigma_2^2)+\beta_2\mu_1^2\sigma_2^2,
    \end{equation}
    which implies that $\mu_1^2<2\delta \beta_1\sigma_1^2$ and $\mu_2^2<2\delta \beta_2\sigma_2^2$ must both hold. We then have $\psi(0)<0$ and $\psi(1)<0$. It suffices to show that $\psi(z)<0$ for all $z\in(0,1)$. We can rewrite $\psi$ as 
    \begin{equation}
        \psi(z)=h_1z^4+h_2z^3(z-1)+h_3(z-1)^4+h_4z(z-1)^3+h_5z^2(z-1)^2,
    \end{equation}
    where
        \begin{align}
            h_1&=\beta_1\beta_2(\beta_2\mu_1^2\sigma_2^2+\beta_1\mu_2^2\sigma_1^2-2\delta\beta_1\beta_2\sigma_1^2\sigma_2^2) \leq 0,\\
            h_2&=-\beta_2\mu_1^2\sigma_2^2(2\beta_1+\beta_2)-\beta_1\mu_2^2\sigma_1^2(\beta_1+2\beta_2)+2\beta_1\beta_2\mu_1\mu_2\sigma_1\sigma_2+4\delta\beta_1\beta_2(\beta_1+\beta_2)\sigma_1^2\sigma_2^2\\
            &=-(\beta_1+\beta_2)(\beta_2\mu_1^2\sigma_2^2+\beta_1\mu_2^2\sigma_1^2-2\delta\beta_1\beta_2\sigma_1^2\sigma_2^2)+2\beta_1\beta_2\mu_1\mu_2\sigma_1\sigma_2\\
            &\qquad+\beta_1\beta_2[\sigma_2^2(2\delta\beta_1\sigma_1^2-\mu_1^2)+\sigma_1^2(2\delta\beta_2\sigma_2^2-\mu_2^2)]>0,\\
            h_3&=-2\delta\sigma_1^2\sigma_2^2(1-\rho^2)^2<0,\\
            h_4&=(\mu_1^2\sigma_2^2+\mu_2^2\sigma_1^2)(3\rho+1)(\rho-1)+2\mu_1\mu_2\sigma_1\sigma_2(1-\rho)(2\rho^2+\rho+1)+4\delta(1-\rho^2)(\beta_1+\beta_2)\sigma_1^2\sigma_2^2\\
            &>(\mu_1^2\sigma_2^2+\mu_2^2\sigma_1^2)(3\rho+1)(\rho-1)+2\mu_1\mu_2\sigma_1\sigma_2(1-\rho)(2\rho^2+\rho+1)+2(1-\rho^2)(\mu_1^2\sigma_2^2+\mu_2^2\sigma_1^2)\\
            &=(\mu_1^2\sigma_2^2+\mu_2^2\sigma_1^2)(\rho-1)^2+2\mu_1\mu_2\sigma_1\sigma_2(1-\rho)(2\rho^2+\rho+1)>0,\\
            h_5&=\mu_1^2\sigma_2^2(\beta_1-3\rho^2\beta_2+2\rho\beta_2+2\beta_2)+\mu_2^2\sigma_1^2(\beta_2-3\rho^2\beta_1+2\rho\beta_1+2\beta_1)-2\mu_1\mu_2\sigma_1\sigma_2(\beta_1+\beta_2)\\
            &\qquad -2\delta\sigma_1^2\sigma_2^2(\beta_1^2+2\beta_1\beta_2+\beta_2^2)-4\delta\sigma_1^2\sigma_2^2(1-\rho^2)\beta_1\beta_2\\
            &<\mu_1^2\sigma_2^2(\beta_1-3\rho^2\beta_2+2\rho\beta_2+2\beta_2)+\mu_2^2\sigma_1^2(\beta_2-3\rho^2\beta_1+2\rho\beta_1+2\beta_1)-2\mu_1\mu_2\sigma_1\sigma_2(\beta_1+\beta_2)\\
            &\qquad -(\beta_1+\beta_2)(\mu_1^2\sigma_2^2+\mu_2^2\sigma_1^2)-4\delta\sigma_1^2\sigma_2^2(1-\rho^2)\beta_1\beta_2\\
            &=-\beta_2\mu_1^2\sigma_2^2(\rho-1)^2-\beta_1\mu_2^2\sigma_1^2(\rho-1)^2-2\mu_1\mu_2\sigma_1\sigma_2(\beta_1+\beta_2)\\
            &\qquad +2(1-\rho^2)(\beta_2\mu_1^2\sigma_2^2+\beta_1\mu_2^2\sigma_1^2-2\delta\beta_1\beta_2\sigma_1^2\sigma_2^2)<0.
        \end{align}
    Since $z\in(0,1)$, it holds that $\psi(z)<0$, which completes the proof.

\subsection{Proof of Theorem \ref{theorem 1}}

In this section, we present the key results used to obtain Theorem \ref{theorem 1}. The discussion serves as the proof for Theorem \ref{theorem 1}.

Suppose $\gamma_1\in(0,1)$ exists, \eqref{eqn: correlation bounds} holds, and $N_1\neq N_3$, as required in Theorem \ref{theorem 1}. Recall that $N_1$ and $N_3$ are defined in \eqref{eqn: N1, N2, N3}. Since $\gamma_1$ exists, then $\delta < \frac{\mu_1^2}{2\beta_1\sigma_1^2} + \frac{\mu_2^2}{2\beta_2\sigma_2^2}$. We conjecture that $g$ satisfies $g'(x) > a_2$ for $x< b^*$ and $g'(x) = a_2$ for $x\geq b^*$.

In the region $\{x<w_0\}$, the HJB equation \eqref{eqn:hjborig} becomes
\begin{equation}\label{eqn:hjb x<w0}
    \frac{A(x)}{2\sigma_1^2\sigma_2^2B(x)}-\delta g(x)=0,
\end{equation}
where
\begin{align}
    A(x)&:= \left[\mu_1^2\sigma_2^2(3\rho+1)(\rho-1)+\mu_2^2\sigma_1^2(3\rho+1)(\rho-1)+2\mu_1\mu_2\sigma_1\sigma_2(1-\rho)(2\rho^2+\rho+1) \right]g''(x)^3g'(x)^2g(x)^4\\
        &\qquad + \left[-\beta_2\mu_1^2\sigma_2^2(2\beta_1+\beta_2)-\beta_1\mu_2^2\sigma_1^2(\beta_1+2\beta_2)+2\beta_1\beta_2\mu_1\mu_2\sigma_1\sigma_2 \right] g''(x)g'(x)^6g(x)^2\\
        &\qquad+ \left[\mu_1^2\sigma_2^2(\beta_1-3\rho^2\beta_2+2\rho\beta_2+2\beta_2)+\mu_2^2\sigma_1^2(\beta_2-3\rho^2\beta_1+2\rho\beta_1+2\beta_1) \right.\\
        &\qquad\qquad \left. -2\mu_1\mu_2\sigma_1\sigma_2(\beta_1+\beta_2) \right]g''(x)^2g'(x)^4g(x)^3 + \left[\beta_1\beta_2^2\mu_1^2\sigma_2^2+\beta_1^2\beta_2\mu_2^2\sigma_1^2\right]g'(x) ^8g(x),\\
        B(x)&:=[(\beta_1+\beta_2)^2+2(1-\rho^2)\beta_1\beta_2]g''(x)^2g'(x)^4g(x)^2+(1-\rho^2)^2g''(x)^4g(x)^4+\beta_1^2\beta_2^2g'(x)^8\\
        &\qquad-2(1-\rho^2)(\beta_1+\beta_2)g''(x)^3g'(x)^2g(x)^3-2\beta_1\beta_2(\beta_1+\beta_2)g''(x)g'(x)^6g(x).
\end{align}
We use the ansatz $g_1(x)=K_1x^{\gamma_1}$, with $K_1>0$ and $\gamma_1\in(0,1)$, on \eqref{eqn:hjb x<w0} and obtain $\frac{A(x)-2K_1\delta\sigma_1^2\sigma_2^2B(x)x^{\gamma_1}}{2\sigma_1^2\sigma_2^2B(x)}=0,$
which is equivalent to
\begin{equation}\label{eqn: A-B gamma1}
    \frac{\widetilde A(\gamma_1) -2\delta\sigma_1^2\sigma_2^2\widetilde B(\gamma_1)}{2\sigma_1^2\sigma_2^2\widetilde B(\gamma_1)}x^{\gamma_1}=0,
\end{equation}
where $\widetilde A(z)$ and $\widetilde B(z)$ are defined in \eqref{eqn: defn of psi}. Since $g$ must be (strictly) increasing in $x$ and $g(0)=0$, it follows that $g_1(x)>0$ in the region $\{0<x<b^*\}$. Moreover, since $\gamma_1\in(0,1)$ and $\beta_i>0$, we have $\widetilde B(\gamma_1)>0$. Hence, \eqref{eqn: A-B gamma1} is also equivalent to
\begin{equation}\label{eqn: gamma1 equation}
    \widetilde A(\gamma_1) -2\delta\sigma_1^2\sigma_2^2\widetilde B(\gamma_1) = 0.
\end{equation}

We now determine the corresponding optimal reinsurance levels in the region $\{x<w_0\}$ with the constraint $\pi_i\in[0,1]$. From \eqref{eqn: optimal pi} and the ansatz $g_1(x)=K_1x^{\gamma_1}$, we have $\bar\pi_1(x)=\frac{x}{w_1}$ and $\bar\pi_2(x)=\frac{x}{w_2}$.
It is clear that $w_0=\min\{w_1,w_2\}$. From \eqref{eqn:theta candidates}, the optimal distortion levels are then given by $\bar\theta_1(x)=\frac{\beta_1\sigma_1\gamma_1}{w_1}$ and $\bar\theta_2(x)=\frac{\beta_2\sigma_2\gamma_1}{w_2}$.
Due to the constraint $\pi_i\in[0,1]$, at least one line must retain all of its risk when the aggregate reserve level exceeds the threshold $w_0$. The following lemma states that the retention level of \emph{both} lines remains constant when $x\geq w_0$.
\begin{lemma}\label{lemma: pi remain constant}
    Write $\tilde w_0:=\min\{w_1,w_2\}$. For $x\geq \tilde w_0$, $(\bar\pi_1,\bar\pi_2)(x)=\left(\frac{\tilde w_0}{w_1},\frac{\tilde w_0}{w_2}\right).$
\end{lemma}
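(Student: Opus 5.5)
We argue in the case $\tilde w_0=w_1\le w_2$; the case $\tilde w_0=w_2$ is symmetric after exchanging the indices. For $x<\tilde w_0$ the unconstrained maximizers \eqref{eqn: optimal pi} evaluated on $g_1(x)=K_1x^{\gamma_1}$ give $(\bar\pi_1,\bar\pi_2)=(x/w_1,x/w_2)$. At $x=\tilde w_0$ we get $\bar\pi_1=1$ and $\bar\pi_2=\tilde w_0/w_2\le 1$, which lies in $[0,1]$ by \eqref{eqn: correlation bounds}. Beyond $\tilde w_0$ the constraint $\pi_1\le 1$ binds, so the reinsurance part of \eqref{eqn:hjb rewritten} becomes a constrained quadratic maximization, and the lemma amounts to computing its solution.

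The first step is to write the $(\pi_1,\pi_2)$-objective, after inserting \eqref{eqn:theta candidates}, in a normalized form. Dividing by $g'(x)^2/g(x)>0$ and setting $q(x):=g(x)g''(x)/g'(x)^2$ and $p(x):=g(x)/g'(x)$, the objective is
\[
\tfrac12\,\pi^\top M(q)\,\pi+p\,\mu^\top\pi ,
\]
where
\[
M(q)=\begin{pmatrix}\sigma_1^2(q-\beta_1)&\rho\sigma_1\sigma_2 q\\ \rho\sigma_1\sigma_2 q&\sigma_2^2(q-\beta_2)\end{pmatrix}.
\]
Since $q\le 0$ by concavity and $\beta_i>0$, the matrix $M(q)$ is negative definite. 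So the problem is strictly concave, and the KKT conditions characterize the maximizer. On $[\tilde w_0,b^*)$ we take $\pi_1=1$, and the first-order condition in $\pi_2$ gives
\[
\pi_2=\frac{\mu_2p+\rho\sigma_1\sigma_2 q}{\sigma_2^2(\beta_2-q)} .
\]
We must also verify the KKT sign condition for $\pi_1$: the partial derivative in $\pi_1$ must be $\ge 0$ at $(1,\pi_2)$.

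The second step closes the system. We substitute this $\pi_2$ into the HJB equation \eqref{eqn:hjb rewritten} on $[\tilde w_0,b^*)$. This gives a first-order relation between $p$ and $q$, that is, a second-order ODE for $g$. We then ask whether the constant choice $\pi_2\equiv \tilde w_0/w_2$ is consistent with it. With constant retentions the HJB reduces to the autonomous ODE
\[
N_1g''+N_2g'-N_3\frac{g'^2}{g}-\delta g=0,
\]
with $N_1,N_2,N_3$ as in \eqref{eqn: N1, N2, N3}. Its solution is the one appearing in Theorem \ref{theorem 1}, and $v=g'/g=1/p$ is given by \eqref{eqn: v(x), K3, gamma2pm thm 1}. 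We then check that the map $x\mapsto(\mu_2p+\rho\sigma_1\sigma_2q)/(\sigma_2^2(\beta_2-q))$ is constant along this solution. Using the ODE to eliminate $q$ in favour of $p$, this is an identity in $p$. It should be verified at $x=\tilde w_0$ with the data from the power region, namely $q=(\gamma_1-1)/\gamma_1$, $p=\tilde w_0/\gamma_1$, and \eqref{eqn: gamma1 equation}. Its propagation to all $x$ then follows from uniqueness for the ODE. On $[b^*,\infty)$, $g$ is linear, so $q=0$ and $p=x-b^*+1/v(b^*)$, and we handle that region in the same manner.

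I expect the main obstacle to be exactly this step. The formula for the constrained $\pi_2$ depends on $x$ through both $p$ and $q$, and constancy holds only if the HJB ODE links $p$ and $q$ in such a way that the ratio $(\mu_2p+\rho\sigma_1\sigma_2q)/(\beta_2-q)$ does not move. My first attempt will be to rewrite the HJB in terms of $(p,q)$ and solve it for $q$ as a rational function of $p$. If this fails to give a clean identity, I would instead show, by a continuity and monotonicity argument on the ODE, that any nonconstant $\pi_2$ would violate the KKT inequality for $\pi_1$ or the matching at $\tilde w_0$. Finally, I would verify that $\partial/\partial\pi_1\ge 0$ at $\pi_1=1$ holds for $x\ge\tilde w_0$. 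At $x=\tilde w_0$ this is exactly the statement $w_1\le w_2$, and for larger $x$ it should follow from $p$ being increasing, since $v$ is decreasing.
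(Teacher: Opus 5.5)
Your reduction is correct. After inserting \eqref{eqn:theta candidates}, the $\pi$-objective is $\tfrac12\pi^\top M(q)\pi+p\,\mu^\top\pi$, which is strictly concave for $q\le 0$. With $\pi_1=1$, the first-order condition gives $\pi_2=(\mu_2p+\rho\sigma_1\sigma_2q)/(\sigma_2^2(\beta_2-q))$.

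The gap is the step you flag yourself: the claimed identity in $p$ is false. Dividing $N_1g''+N_2g'-N_3g'^2/g-\delta g=0$ by $g'^2/g$ gives $N_1q=\delta p^2-N_2p+N_3$. Requiring the KKT value to equal a constant $c$ reads $\mu_2p+(\rho\sigma_1\sigma_2+c\sigma_2^2)q=c\beta_2\sigma_2^2$. After substituting $q$, this is a polynomial equation in $p$ whose $p^2$-coefficient is $\delta(\rho\sigma_1\sigma_2+c\sigma_2^2)/N_1$. That coefficient vanishes only for $c=-\rho\sigma_1/\sigma_2$, and then the $p$-coefficient is $\mu_2\neq 0$. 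So the relation holds for at most two values of $p$.

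Since $p'=1-q\ge 1$, $p$ is strictly increasing. Hence the KKT value of $\pi_2$ equals $\tilde w_0/w_2$ at $x=\tilde w_0$ (your matching check) and moves away immediately afterwards whenever $w_1\neq w_2$. Checking the relation at one point and invoking ODE uniqueness does not propagate it, because the relation is not invariant under the flow.

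The same computation rules out any constant interior $\pi_2$ on an interval where $\pi_1=1$. So your fallback, showing that a non-constant $\pi_2$ violates KKT, cannot work either: KKT itself forces $\pi_2$ to vary. On $[b^*,\infty)$ the failure is immediate. There $q=0$, the objective separates, and the maximizer is $\min\{1,\mu_ip/(\beta_i\sigma_i^2)\}$, which increases linearly in $x$ until it reaches $1$.

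So, read as a statement about the maximizer in \eqref{eqn:hjb rewritten}, the lemma holds only at $x=\tilde w_0$ (or trivially when $w_1=w_2$, where both retentions equal $1$). No argument along your lines can establish it beyond that point.

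The paper's own proof never engages with the constrained problem. It caps $\bar\pi_1$ at $1$ and keeps the power-region ratio $\bar\pi_2/\bar\pi_1=w_1/w_2$ via the chain $x/w_2=\bar\pi_1(x)w_1/w_2=\tilde w_0/w_2$. That chain uses $\bar\pi_1(x)=x/w_1$ and $\bar\pi_1(x)=1$ at the same time, so it is valid only at $x=\tilde w_0$. In effect the lemma is an ansatz, freezing both retentions at their values at $\tilde w_0$, which is then used to obtain the constant-coefficient ODE with $N_1,N_2,N_3$.

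Your KKT computation is exactly what shows this ansatz violates the first-order condition in $\pi_2$ just above $\tilde w_0$. Because $\pi_2^*=\tilde w_0/w_2$ is interior there, hypothesis \eqref{proof eqn 9} of the verification theorem would fail for the resulting candidate. A correct continuation must let the uncapped line follow the $x$-dependent KKT value, as long as the sign condition for the capped line holds. That changes the ODE on $(\tilde w_0,b^*)$.
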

\begin{proof}
Suppose $w_1\leq w_2$. It follows that $\tilde w_0=w_1$ and $\bar\pi_1(x) = 1$ for $x\geq \tilde w_0$.  Since $ \bar\pi_2(x) = \frac{x}{w_2}= \frac{\bar\pi_1(x) w_1}{w_2}=\frac{\tilde w_0}{w_2}$, the result then follows. The proof for the case where $w_2\leq w_1$ is similar.
\end{proof}
Using the above lemma, in the region $\{w_0<x<b^*\}$, the HJB equation \eqref{eqn:hjborig} becomes
\begin{equation}\label{eqn: ode 2}
    N_1g''(x)+N_2g'(x)-N_3\frac{g'(x)^2}{g(x)}-\delta g(x) =0,
\end{equation}
where $N_1$, $N_2$, and $N_3$ are defined in \eqref{eqn: N1, N2, N3}. Define $v(x)$ such that it satisfies
\begin{equation}\label{eqn: defn of v}
    v(x):=\frac{g_2'(x)}{g_2(x)},
\end{equation}
where $g_2$ satisfies \eqref{eqn: ode 2}. We can then rewrite \eqref{eqn: ode 2} as
\begin{equation}\label{eqn: ode for v}
    v'(x)+\left(1-\frac{N_3}{N_1}\right)v(x)^2+\frac{N_2}{N_1}v(x)-\frac{\delta}{N_1}=0,
\end{equation}
which is a Riccati equation since we assume that $N_1\neq N_3$. Define $u(x)$ such that it satisfies $\left(1-\frac{N_3}{N_1}\right)v(x)=\frac{u'(x)}{u(x)}$.
Then, \eqref{eqn: ode for v} is equivalent to
\begin{equation}\label{eqn: ode for u}
    u''(x) + \frac{N_2}{N_1}u'(x) - \left(1-\frac{N_3}{N_1}\right)\frac{\delta}{N_1}u(x)=0,
\end{equation}
whose solution is given by $u(x)=K_{3+}e^{\gamma_{2+}x}+K_{3-}e^{\gamma_{2-}x},$
where $\gamma_{2\pm}$ are defined in \eqref{eqn: v(x), K3, gamma2pm thm 1},
provided that $N_2^2+4\delta(N_1-N_3)>0$. The following lemma proves that $N_2^2+4\delta(N_1-N_3)>0$ indeed holds.
\begin{lemma}
    Suppose $\delta < \frac{\mu_1^2}{2\beta_1\sigma_1^2} + \frac{\mu_2^2}{2\beta_2\sigma_2^2}$. It holds that $N_2^2+4\delta(N_1-N_3)>0$.
\end{lemma}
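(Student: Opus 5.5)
The plan is to avoid estimating $N_2^2+4\delta(N_1-N_3)$ directly from the bound $\delta<\frac{\mu_1^2}{2\beta_1\sigma_1^2}+\frac{\mu_2^2}{2\beta_2\sigma_2^2}$. Instead, I would use the hypothesis only through Lemma \ref{lemma: gamma1 suff cond}, which guarantees a root $\gamma_1\in(0,1)$ of $\psi$. I would then extract an exact identity for $\delta$ by evaluating the HJB equation at the junction point $x=w_0$. The direct route looks unpromising: bounding $4\delta(N_3-N_1)\le 4\delta N_3$ and then using the $\delta$-bound leads to a Cauchy--Schwarz inequality that controls $N_2^2$ from above rather than from below.

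\textbf{Step 1 (trivial case).} Write $p_i:=w_0/w_i\in[0,1]$, not both zero, since $w_0=\min\{w_1,w_2\}$ and $w_1,w_2>0$ under \eqref{eqn: correlation bounds}. Then $N_2=\mu_1p_1+\mu_2p_2>0$. Also
$$N_1=\tfrac12\,(\sigma_1p_1,\sigma_2p_2)\begin{pmatrix}1&\rho\\ \rho&1\end{pmatrix}(\sigma_1p_1,\sigma_2p_2)^{\top}>0,$$
because $|\rho|<1$ makes the matrix positive definite. Hence if $N_1\ge N_3$ the claim is immediate. It remains to treat $c:=N_3-N_1>0$.

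\textbf{Step 2 (identity for $\delta$).} On $\{x<w_0\}$ we have $g_1(x)=K_1x^{\gamma_1}$ with $\bar\pi_i(x)=x/w_i$. At $x=w_0$ these retention levels coincide with the constant levels $p_i$ of Lemma \ref{lemma: pi remain constant}. Consequently the full HJB equation \eqref{eqn:hjb x<w0}, evaluated at $x=w_0$, is exactly the ODE \eqref{eqn: ode 2} with the same $N_1,N_2,N_3$, and $g_1$ satisfies it there. I would verify this by substituting $\pi_i=p_i$ and $\theta_i=\widehat\theta_i$ into the generator; the $\pi$-optimization is attained at these values, so the two equations agree. Put $s:=\gamma_1/w_0>0$, so that $g_1'/g_1=s$ and $g_1''/g_1=\gamma_1(\gamma_1-1)/w_0^2=s^2-s/w_0$ at $w_0$. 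Dividing \eqref{eqn: ode 2} by $g_1(w_0)>0$ gives
$$\delta=N_1\Big(s^2-\frac{s}{w_0}\Big)+N_2 s-N_3 s^2=-c\,s^2+N_2 s-\frac{N_1 s}{w_0}.$$

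\textbf{Step 3 (complete the square).} Substituting this expression for $\delta$,
$$N_2^2+4\delta(N_1-N_3)=N_2^2-4c\Big(N_2s-cs^2-\frac{N_1s}{w_0}\Big)=(N_2-2cs)^2+\frac{4cN_1s}{w_0}.$$
Since $c,N_1,s,w_0>0$, the right-hand side is strictly positive, which proves the claim.

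The main obstacle is Step 2. One must justify carefully that the multivariate HJB \eqref{eqn:hjb x<w0} with the power ansatz reduces at $x=w_0$ to \eqref{eqn: ode 2}. This amounts to checking that the value of the inner sup--inf at the interior optimizer $\widehat\pi_i=p_i$ equals $N_1g''+N_2g'-N_3g'^2/g$. That is a direct substitution, but it is the point where the definitions of $w_1$, $w_2$ and $N_1,N_2,N_3$ must be matched exactly.
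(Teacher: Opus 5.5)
Your argument is correct, and it takes a genuinely different route from the paper's.

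\textbf{The paper's route.} The paper also dismisses $N_1>N_3$ at once. For $N_1<N_3$ it treats $h(\delta):=N_2^2+4\delta(N_1-N_3)$ as a linear function of $\delta$ and evaluates it at the endpoint $\overline\delta=\frac{\mu_1^2}{2\beta_1\sigma_1^2}+\frac{\mu_2^2}{2\beta_2\sigma_2^2}$. There $\gamma_1=1$, implicitly $w_0=w_1$, and $w_1/w_2=\frac{\beta_1\mu_2\sigma_1^2}{\beta_2\mu_1\sigma_2^2}$. It checks that $4\overline\delta N_3=N_2^2$ at that point, so $h(\overline\delta)=4\overline\delta N_1>0$. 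Finally it uses $h'(\delta)=4(N_1-N_3)<0$ to pass to $\delta<\overline\delta$.

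\textbf{Your route.} You stay at the given $\delta$. Your Step~2 identity is exactly the $C^2$-fit condition at $w_0$, equivalently $v'(w_0)=-\gamma_1/w_0^2$ in \eqref{eqn: ode for v}. The paper already relies on this fact when it says that matching $g$ and $g'$ at $w_0$ suffices, so your proof adds no new assumption. The identity says that the concave quadratic $y\mapsto (N_1-N_3)y^2+N_2y-\delta$ takes the positive value $N_1\gamma_1/w_0^2$ at $y=\gamma_1/w_0$. The quantity in the lemma is the discriminant of this quadratic, whose roots are the stationary points of the Riccati equation. Completing the square is just the statement that such a parabola has two real roots.

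\textbf{What each buys.} Your argument is pointwise in $\delta$. You never need to track how $N_1,N_2,N_3$ move with $\delta$; they do depend on it through $\gamma_1$ and $w_0/w_i$, and the paper's formula for $h'$ ignores this. You also do not need continuity of the chosen root $\gamma_1$ up to $\overline\delta$, nor to know which of $w_1,w_2$ is the minimum there. You get the explicit bound $N_2^2+4\delta(N_1-N_3)\geq 4(N_3-N_1)N_1\gamma_1/w_0^2$. As a by-product you get $v'(w_0)+v(w_0)^2=\gamma_1(\gamma_1-1)/w_0^2<0$ without the $\widetilde f$ computation in the proof of Lemma \ref{lemma on u1 exist case 1}. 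In exchange, the paper's endpoint value is a closed-form computation that uses only $\psi(1)=0$ at $\overline\delta$. However, its monotonicity step would need the $\delta$-dependence of $N_1,N_2,N_3$ to be accounted for before it is a complete proof.
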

\begin{proof}
    Suppose $N_1>N_3$. The result immediately follows. Suppose now that $N_1<N_3$. Define $h(\delta):=N_2^2+4\delta(N_1-N_3). $
Write $\overline\delta:=\frac{\mu_1^2}{2\beta_1\sigma_1^2}+\frac{\mu_2^2}{2\beta_2\sigma_2^2}.$ From Lemma \ref{lemma: gamma1 suff cond}, it follows that $\gamma_1=1$ if $\delta=\overline \delta$. Then, $\frac{w_1}{w_2}|_{\gamma_1=1}=\frac{\beta_1\mu_2\sigma_1^2}{\beta_2\mu_1\sigma_2^2}.$
Hence,
\begin{align}
    h(\overline \delta)&=N_2^2+4\left(\frac{\mu_1^2}{2\beta_1\sigma_1^2}+\frac{\mu_2^2}{2\beta_2\sigma_2^2}\right)\left(\frac{1}{2}\sigma_1^2(1-\beta_1)+\frac{1}{2}\sigma_2^2(1-\beta_2)\frac{\beta_1^2\mu_2^2\sigma_1^4}{\beta_2^2\mu_1^2\sigma_2^4}+\rho\sigma_1\sigma_2\frac{\beta_1\mu_2\sigma_1^2}{\beta_2\mu_1\sigma_2^2}\right)\\
    &=N_2^2+\left(\frac{2\mu_1^2}{\beta_1\sigma_1^2}+\frac{2\mu_2^2}{\beta_2\sigma_2^2}\right)\left(\frac{1}{2}\sigma_1^2+\frac{1}{2}\sigma_2^2\frac{\beta_1^2\mu_2^2\sigma_1^4}{\beta_2^2\mu_1^2\sigma_2^4}+\rho\sigma_1\sigma_2\frac{\beta_1\mu_2\sigma_1^2}{\beta_2\mu_1\sigma_2^2}\right)\\
    &\quad + \left(\frac{2\mu_1^2}{\beta_1\sigma_1^2}+\frac{2\mu_2^2}{\beta_2\sigma_2^2}\right)\left(-\frac{1}{2}\beta_1\sigma_1^2-\frac{1}{2}\beta_2\sigma_2^2\frac{\beta_1^2\mu_2^2\sigma_1^4}{\beta_2^2\mu_1^2\sigma_2^4}\right)\\
    &=N_2^2+\left(\frac{2\mu_1^2}{\beta_1\sigma_1^2}+\frac{2\mu_2^2}{\beta_2\sigma_2^2}\right)\left(\frac{1}{2}\sigma_1^2+\frac{1}{2}\sigma_2^2\frac{\beta_1^2\mu_2^2\sigma_1^4}{\beta_2^2\mu_1^2\sigma_2^4}+\rho\sigma_1\sigma_2\frac{\beta_1\mu_2\sigma_1^2}{\beta_2\mu_1\sigma_2^2}\right)- N_2^2>0.
    \end{align}
Since $h'(\delta) =4(N_1-N_3)<0$, we have $h(\delta)>h(\overline\delta)>0$ for all $\delta\in(0,\overline \delta)$.
\end{proof}

From the definition of $v$ in \eqref{eqn: defn of v}, we then have
\begin{equation}
    \frac{\dd }{\dd x}\left[\ln g_2(x)\right]=\frac{g_2'(x)}{g_2(x)}=v(x)=\frac{N_1}{N_1-N_3}\frac{u'(x)}{u(x)}=\frac{N_1}{N_1-N_3}\frac{\dd }{\dd x}\left[\ln u(x)\right].
\end{equation}
Hence,
\begin{equation}
    g_2(x)=K_2\exp\left[\int_{w_0}^x v(y) \dd y\right]=K_2\exp\left[\frac{N_1}{N_1-N_3} \ln \left(\frac{u(x)}{u(w_0)}\right) \right]=K_2\left[\frac{u(x)}{u(w_0)}\right]^{\frac{N_1}{N_1-N_3}},
\end{equation}
where $K_2>0$ is an undetermined constant. It must be noted that $\frac{u(x)}{u(w_0)}=       \frac{K_3e^{\gamma_{2+}x}+e^{\gamma_{2-}x}}{K_3e^{\gamma_{2+}w_0}+e^{\gamma_{2-}w_0}},$
where $K_3:=\frac{K_{3+}}{K_{3-}}$. This implies that $g_2$ implicitly depends on the constant $K_3$.

In the region $\{x>b^*\}$, since we conjecture that $g'(x) = a_2$, it must then hold that $g_3(x)=a_2\left(x-b^*+K_4\right).$
We then conjecture the following solution:
\begin{align}\label{eqn: g conjecture}
    g(x)=
    \begin{cases}
    K_1x^{\gamma_1} & \mbox{if $x<w_0$,}\\
    K_2 e^{\int_{w_0}^x v(y) \dd y} & \mbox{if $w_0\leq x<b^*$,}\\
    a_2\left(x-b^*+K_4\right) & \mbox{if $x\geq b^*$.}
    \end{cases}
\end{align}

To ensure that $g$ is twice continuously differentiable, we require $g$, $g'$, and $g''$ to be continuous at the switching points $w_0$ and $b^*$. Since in the neighborhood of $w_0$ the function $g$ satisfies \eqref{eqn:hjborig}, it suffices to show that $g$ and $g'$ are continuous at $x=w_0$. We have the following system of equations:
\begin{equation}\label{eqn: system at w0}
    \begin{aligned}
        K_1w_0^{\gamma_1}&= K_2\\
        K_1\gamma_1w_0^{\gamma_1-1}&=K_2 v(w_0),
    \end{aligned}
\end{equation}
which yields
\begin{equation}\label{eqn: K1 in terms of K2}
    K_1=K_2w_0^{-\gamma_1}.
\end{equation}
Moreover, $K_3$ must satisfy
\begin{equation}\label{eqn: v(w0) for K3}
    v(w_0)=\frac{\gamma_1}{w_0}.
\end{equation}
We then obtain $K_3$ defined in \eqref{eqn: v(x), K3, gamma2pm thm 1}.

In the neighborhood of $b^*$, we have the following system of equations:
\begin{equation}\label{eqn: system at b*}
    \begin{aligned}
        K_2 \exp\left[\int_{w_0}^{b^*}v(y)\, \dd y\right]&= a_2 K_4\\
        K_2v(b^*) \exp\left[\int_{w_0}^{b^*}v(y)\, \dd y\right]&= a_2 \\
        K_2\left[v(b^*)^2+v'(b^*)\right] \exp\left[\int_{w_0}^{b^*}v(y)\, \dd y\right]&= 0.
    \end{aligned}
\end{equation}
From the first and second equations, we obtain
\begin{equation}\label{eqn: K2 and K4 from system}
    K_2=\frac{a_2}{v(b^*)}\exp\left[-\int_{w_0}^{b^*}v(y)\, \dd y\right] \quad \mbox{and} \quad
    K_4=\frac{1}{v(b^*)}.
\end{equation}
From the third equation, $b^*$ must satisfy
\begin{equation}\label{eqn: v2+v' =0 for b*}
    v(b^*)^2+v'(b^*)=0.
\end{equation}
The following lemma guarantees the existence of $b^*$.

\begin{lemma}\label{lemma on u1 exist case 1}
    Suppose that $\delta < \frac{\mu_1^2}{2\beta_1\sigma_1^2} + \frac{\mu_2^2}{2\beta_2\sigma_2^2}$ and $N_1\neq N_3$. Then, $b^*$ exists and satisfies \eqref{eqn: b^* theorem 1}.
\end{lemma}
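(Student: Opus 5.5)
We have to show that the function $F(x):=v(x)^2+v'(x)$ has a zero on $(w_0,\infty)$, with $v$ the solution of the Riccati equation \eqref{eqn: ode for v} normalized by $v(w_0)=\gamma_1/w_0$. The plan is to substitute the Riccati equation for $v'$, so that
\begin{equation}
F(x)=\frac{N_3}{N_1}v(x)^2-\frac{N_2}{N_1}v(x)+\frac{\delta}{N_1}=:\Phi(v(x)),
\end{equation}
and then study the scalar quadratic $\Phi$ along the trajectory of $v$. Note also that $g_2''/g_2=v^2+v'$. The condition $F(b^*)=0$ is therefore exactly the requirement $g''(b^*)=0$, and concavity on $[w_0,b^*)$ corresponds to $F<0$ there.

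\textbf{Step 1: sign of $F$ at $w_0$.} I will show $F(w_0)<0$. At $w_0$, the $\mathrm{C}^2$ pasting with $g_1=K_1x^{\gamma_1}$ gives $F(w_0)=g_1''(w_0)/g_1(w_0)=\gamma_1(\gamma_1-1)/w_0^2<0$, since $\gamma_1\in(0,1)$. The pasting is consistent because the ODE \eqref{eqn: ode 2} coincides with \eqref{eqn:hjb x<w0} at $w_0$, where $\bar\pi_i(w_0)=w_0/w_i$.

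\textbf{Step 2: behaviour of $v$.} The Riccati equation is autonomous: $v'=-\big(1-\tfrac{N_3}{N_1}\big)v^2-\tfrac{N_2}{N_1}v+\tfrac{\delta}{N_1}=:R(v)$. Its equilibria are the roots $\gamma_{2\pm}$, which are real by the preceding lemma. The equation can also be written as $R(v)=\Phi(v)-v^2$, so $F=R(v)+v^2$.

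Since $g_2'(w_0)=g_1'(w_0)>0$, we have $v(w_0)>0$. Then $v'(w_0)=F(w_0)-v(w_0)^2<0$, so $v$ decreases initially. By uniqueness of solutions of the autonomous ODE, $v$ is monotone decreasing for as long as it exists, and it converges to the largest equilibrium below $v(w_0)$. When $N_1>N_3$ this equilibrium is $\gamma_{2+}>0$; when $N_1<N_3$ a blow-up could occur, and that case needs separate treatment.

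\textbf{Step 3: existence of the zero.} In the limit $v\to\gamma_{2+}$ we have $v'\to0$, so $F\to\gamma_{2+}^2>0$. Combined with $F(w_0)<0$ and continuity, this yields a zero of $F$ in $(w_0,\infty)$, and $b^*$ is the first such zero, which matches \eqref{eqn: b^* theorem 1}. Along the way we must check that $u(x)=K_3e^{\gamma_{2+}x}+e^{\gamma_{2-}x}$ does not vanish before $b^*$, so that $v$ is finite up to $b^*$. From the explicit $K_3$, this reduces to the sign of $(1-N_3/N_1)\gamma_1/w_0-\gamma_{2-}$ and of $\gamma_{2+}-(1-N_3/N_1)\gamma_1/w_0$.

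\textbf{Main obstacle.} The delicate case is $N_1<N_3$. There $1-N_3/N_1<0$, and $\gamma_{2-}$ may be positive, so the ordering of $v(w_0)$ relative to the equilibria is unclear; $v$ could be attracted to an equilibrium $\gamma^\ast$ with $\Phi(\gamma^\ast)=(\gamma^\ast)^2>0$, or it could blow up. I would first treat this case by noting that any equilibrium $\gamma^\ast\neq0$ satisfies $F=(\gamma^\ast)^2>0$. Since $F(w_0)<0$, the solution must cross $F=0$ before reaching any equilibrium, or before blowing up, provided $v$ stays positive. Positivity of $v$ itself follows because $v=0$ would give $v'=\delta/N_1>0$, so $v$ cannot cross zero from above.

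Hence, in either case the intermediate value theorem applied to $F$ on the maximal interval of existence gives $b^*$. The remaining technical work is the explicit sign check that $v(w_0)>\gamma_{2+}$ (respectively the analogous inequality) using $\gamma_1$ and \eqref{eqn: A-B gamma1}.
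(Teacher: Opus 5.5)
Your argument has the same skeleton as the paper's proof: show $v'(w_0)+v(w_0)^2<0$, show that $v'+v^2$ has a strictly positive limit as $x\to\infty$, and conclude by continuity (the paper phrases this as a contradiction). Both endpoints are justified differently, and your versions are cleaner.

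At $w_0$, the paper expands $\frac{1}{N_1}\big[N_3\gamma_1^2/w_0^2-N_2\gamma_1/w_0+\delta\big]$ into $\widetilde f(\gamma_1)$ and reads off its sign. You instead use the power-ansatz identity at $x=w_0$, i.e.\ the HJB for $x<w_0$ evaluated at $\bar\pi_i(w_0)=w_0/w_i$. This gives $F(w_0)=\gamma_1(\gamma_1-1)/w_0^2<0$ in one line.

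At infinity, the paper uses the explicit solution $v=\frac{N_1}{N_1-N_3}u'/u$ to get $\lim v=\frac{N_1}{N_1-N_3}\gamma_{2+}$. It then needs a squaring argument to show $-N_2\gamma_{2+}+(1-N_3/N_1)\delta>0$. That bracket is just $N_1\gamma_{2+}^2$, so your observation that $F=(v^\ast)^2$ at any nonzero equilibrium $v^\ast$ gives positivity immediately. Your phase-line view also supplies something the paper leaves implicit: $v$ stays finite on $[w_0,\infty)$, i.e.\ $u$ has no zero there.

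Two corrections are needed.

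First, the equilibria of $v'=R(v)$ are $\frac{N_1}{N_1-N_3}\gamma_{2\pm}$, not $\gamma_{2\pm}$; the $\gamma_{2\pm}$ are the equilibria of $u'/u$. So the limit is $F\to\big(\frac{N_1}{N_1-N_3}\gamma_{2+}\big)^2$ when $N_1>N_3$, rather than $\gamma_{2+}^2$. The sign is unaffected.

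Second, several things you flag as open are unnecessary: the separate treatment of $N_1<N_3$, the blow-up worry, the check that $u$ does not vanish, and the inequality $v(w_0)>\gamma_{2+}$ you list as remaining work. That inequality is stated with the wrong equilibrium anyway. The argument that closes everything:
\begin{itemize}
\item $N_1>0$ as a positive-definite quadratic form (since $|\rho|<1$), so $R(0)=\delta/N_1>0$.
\item You already have $R(v(w_0))=F(w_0)-v(w_0)^2<0$.
\item Hence $R$ has a largest zero $v^\ast\in(0,v(w_0))$, and $R<0$ on $(v^\ast,v(w_0)]$.
\item The solution is therefore strictly decreasing, trapped in $(v^\ast,v(w_0)]$, defined for all $x\geq w_0$, and convergent to $v^\ast$, whatever the sign of $N_1-N_3$.
\end{itemize}
Stating this once completes your proof.
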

\begin{proof}
    From \eqref{eqn: ode for v}, we have $v'(x)+v(x)^2=\frac{1}{N_1}\left[N_3v(x)^2-N_2v(x)+\delta\right].$
    Using \eqref{eqn: v(w0) for K3} yields
    \begin{equation}
        \begin{aligned}
            v'(w_0)+v(w_0)^2
            &=\frac{1}{N_1}\left[N_3v(w_0)^2-N_2v(w_0)+\delta\right]\\
            &=\frac{1}{N_1}\left[N_3\frac{\gamma_1^2}{w_0^2}-N_2\frac{\gamma_1}{w_0}+\delta\right]\\
            &=\frac{-\psi(\gamma_1)+\widetilde f(\gamma_1)}{2\sigma_1^2\sigma_2^2\left[(\beta_1+\beta_2)\gamma_1(\gamma_1-1)-(1-\rho^2)(\gamma_1-1)^2-\beta_1\beta_2\gamma_1^2\right]^2},
        \end{aligned}
    \end{equation}
    where
    \begin{equation}
    \begin{aligned}
        \widetilde f(\gamma_1)&:=(\beta_2\mu_1\sigma_2+\beta_1\mu_2\sigma_1)^2\gamma_1^3(\gamma_1-1)+(1-\rho)^2(\mu_1^2\sigma_2^2+\mu_2^2\sigma_1^2)\gamma_1(\gamma_1-1)^3\\
        &\qquad+2(\rho-1)\left[\beta_2\mu_1^2\sigma_2^2+\beta_1\mu_2^2\sigma_1^2+(\beta_1+\beta_2)\mu_1\mu_2\sigma_1\sigma_2\right]\gamma_1^2(\gamma_1-1)^2.
        \end{aligned}
    \end{equation}
    Since $\gamma_1$ is a solution of $\psi(z)=0$ on $(0,1)$, we have
    \begin{equation}
        \begin{aligned}
            v'(w_0)+v(w_0)^2
            &=\frac{\widetilde f(\gamma_1)}{2\sigma_1^2\sigma_2^2\left[(\beta_1+\beta_2)\gamma_1(\gamma_1-1)-(1-\rho^2)(\gamma_1-1)^2-\beta_1\beta_2\gamma_1^2\right]^2}<0.
        \end{aligned}
    \end{equation}
    
    Suppose $b^*$ does not exist. Since $v(x)^2+v'(x)$ is continuous in $x$, we have
    \begin{equation}\label{eqn: v' + v^2<0}
        v'(x)+v(x)^2 <0 \quad \mbox{for $x>w_0$.}
    \end{equation}
    It must be noted that $\gamma_{2+}>0$ if and only if $N_1>N_3$. Moreover, $\gamma_{2+}>\gamma_{2-}$. Since $v(x)=\frac{N_1}{N_1-N_3}\cdot\frac{u'(x)}{u(x)}$, it holds that
    \begin{equation}\label{eqn: limit of v}
        \lim_{x\to\infty} v(x)=\frac{N_1}{N_1-N_3}\cdot\lim_{x\to\infty} \frac{K_3\gamma_{2+}e^{\gamma_{2+}x}+\gamma_{2-}e^{\gamma_{2-}x}}{K_3e^{\gamma_{2+}x}+e^{\gamma_{2-}x}} = \frac{N_1}{N_1-N_3}\gamma_{2+}.
    \end{equation}
     Moreover, $\gamma_{2+}^2+\frac{N_2}{N_1}\gamma_{2+}-\left(1-\frac{N_1}{N_3}\right)\frac{\delta}{N_1}=0$ since $\gamma_{2+}$ is a solution to the characteristic polynomial associated with \eqref{eqn: ode for u}. We then have
    \begin{align*}
        \lim_{x\to\infty}[v'(x)+v(x)^2]
        &=\frac{1}{N_1}\lim_{x\to\infty}\left[N_3v(x)^2-N_2v(x)+\delta\right]\\
        &=\frac{1}{N_1}\left[\frac{N_3N_1^2}{(N_1-N_3)^2}\gamma_{2+}^2-\frac{N_2N_1}{N_1-N_3}\gamma_{2+}+\delta\right]\\
        &=\frac{N_3N_1}{(N_1-N_3)^2}\left[\gamma_{2+}^2-\frac{N_2}{N_1}\left(\frac{N_1}{N_3}-1\right)\gamma_{2+}+\frac{\delta(N_1-N_3)}{N_1^2}\left(\frac{N_1}{N_3}-1\right)\right]\\
        &=\frac{N_3N_1}{(N_1-N_3)^2}\left[\left(\gamma_{2+}^2+\frac{N_2}{N_1}\gamma_{2+}-\left(1-\frac{N_1}{N_3}\right)\frac{\delta}{N_1}\right)-\frac{N_2}{N_3}\gamma_{2+}+\frac{\delta(N_1-N_3)}{N_1N_3}\right]\\
        &=\frac{N_3N_1}{(N_1-N_3)^2}\left[0-\frac{N_2}{N_3}\gamma_{2+}+\frac{\delta(N_1-N_3)}{N_1N_3}\right]\\
        &=\frac{N_1}{(N_1-N_3)^2}\left[-N_2\gamma_{2+}+\left(1-\frac{N_3}{N_1}\right)\delta\right].
    \end{align*}
    Suppose $-N_2\gamma_{2+}+\left(1-\frac{N_3}{N_1}\right)\delta\leq 0$. It is equivalent to the following inequality:
    \begin{equation}\label{eqn:r1 ineq}
        N_2^2+2\delta (N_1-N_3) \leq  N_2\sqrt{N_2^2+4\delta (N_1-N_3)}.
    \end{equation}
    If $N_1>N_3$, then $N_2^2+2\delta (N_1-N_3)>0$. If $N_1<N_3$, then $N_2^2+2\delta (N_1-N_3)>N_2^2+4\delta (N_1-N_3)>0$. Squaring both sides of \eqref{eqn:r1 ineq} then yields $4\delta^2(N_1-N_3)^2\leq 0,$
    which is a contradiction. Hence, $-N_2\gamma_{2+}+\left(1-\frac{N_3}{N_1}\right)\delta>0$, and, consequently, $\lim_{x\to\infty}[v'(x)+v(x)^2] >0.$
    This contradicts \eqref{eqn: v' + v^2<0}, which proves the existence of $b^*$.
\end{proof}
Thus, we have obtained the form of the value function $g$ defined in \eqref{eqn: g thm 1}. The corresponding optimal reinsurance and distortion levels are given by \eqref{eqn: pi-theta thm 1}.

\begin{remark}\label{remark on g' and g'' thm 1}
    The first derivative and second derivative of $g$ are given as follows:
    \small\begin{equation}
        \begin{aligned}
            g'(x)&=
    \begin{cases}
    \frac{a_2\gamma_1}{v(b^*)e^{\int_{w_0}^{b^*}v(y) \dd y}}\left(\frac{x}{w_0}\right)^{\gamma_1-1} & \mbox{if $x<w_0$,}\\
    \frac{a_2v(x)}{v(b^*)e^{\int_{w_0}^{b^*}v(y) \dd y}} e^{\int_{w_0}^x v(y) \dd y} & \mbox{if $w_0\leq x<b^*$,}\\
    a_2 & \mbox{if $x\geq b^*$,}
    \end{cases}\quad
    g''(x)&=
    \begin{cases}
    \frac{a_2\gamma_1(\gamma_1-1)}{v(b^*)e^{\int_{w_0}^{b^*}v(y) \dd y}}\left(\frac{x}{w_0}\right)^{\gamma_1-2} & \mbox{if $x<w_0$,}\\
    \frac{a_2[v(x)^2+v'(x)]}{v(b^*)e^{\int_{w_0}^{b^*}v(y) \dd y}} e^{\int_{w_0}^x v(y) \dd y} & \mbox{if $w_0\leq x<b^*$,}\\
    0 & \mbox{if $x\geq b^*$.}
    \end{cases}
        \end{aligned}
    \end{equation}\normalsize
    From \eqref{eqn: v' + v^2<0}, we can say that $v'(x) < 0$ for $x>w_0$, which implies that $v$ is decreasing for $x>w_0$. Moreover, from \eqref{eqn: limit of v}, we have $\lim_{x\to\infty} v(x) >0$, which then implies that $v(b^*)>0$. It is then clear that $g'(x)>0$ for $x>0$, which implies that $g$ is strictly increasing for $x>0$. Moreover, since $\gamma_1\in(0,1)$ and $v(x)^2+v'(x)<0$ in the region $\{w_0 < x < b^*\}$, it holds that $g''(x) \leq 0 $ for $x>0$, which implies that $g$ is concave for $x>0$. 
\end{remark}

\subsection{Proof of Theorem \ref{theorem 2}}

In this section, we present the key results used to obtain Theorem \ref{theorem 2}. The discussion serves as the proof for Theorem \ref{theorem 2}.

Suppose $\gamma_1\in(0,1)$ exists, \eqref{eqn: correlation bounds} holds, and $N_1= N_3$, as required in Theorem \ref{theorem 2}. Since $\gamma_1$ exists, then $\delta < \frac{\mu_1^2}{2\beta_1\sigma_1^2} + \frac{\mu_2^2}{2\beta_2\sigma_2^2}$. In the region $\{x < w_0\}$, we obtain \eqref{eqn:hjb x<w0} and still obtain its solution $g_1(x)=K_1x^{\gamma_1}$, where $\gamma_1$ is a solution of $\psi(z)=0$ on $(0,1)$ and $K_1>0$ is an unknown constant. In the region $\{w_0<x<b^*\}$, we obtain \eqref{eqn: ode 2}. Using the same definition of $v$ in \eqref{eqn: defn of v} yields \eqref{eqn: ode for v}. Since $N_1=N_3$, \eqref{eqn: ode for v} has a solution given by 
\begin{equation}
\begin{aligned}
    v(x)
    &=e^{-\frac{N_2}{N_1}x}\left[\int_{w_0}^x\frac{\delta}{N_1}e^{\frac{N_2}{N_1}z} \dd z + K_3\right]=\frac{\delta}{N_2}\left[1-e^{-\frac{N_2}{N_1}(x-w_0)}\right]+K_3e^{-\frac{N_2}{N_1}x}.
\end{aligned}
\end{equation}
We can then conjecture the same solution given by \eqref{eqn: g conjecture}.

Similar to the previous section, we require $g$, $g'$, and $g''$ to be continuous at the switching points $w_0$ and $b^*$. At $x=w_0$, we have the system of equations obtained in \eqref{eqn: system at w0}, which also yields \eqref{eqn: K1 in terms of K2} and \eqref{eqn: v(w0) for K3}. Using \eqref{eqn: v(w0) for K3}, we obtain $K_3=\frac{\gamma_1}{w_0}e^{\frac{N_2}{N_1}w_0}.$
In the neighborhood of $b^*$, we still obtain the system of equations in \eqref{eqn: system at b*}, which yields \eqref{eqn: K2 and K4 from system} and \eqref{eqn: v2+v' =0 for b*}. We then have the following lemma:
\begin{lemma}
    Suppose that $\delta < \frac{\mu_1^2}{2\beta_1\sigma_1^2} + \frac{\mu_2^2}{2\beta_2\sigma_2^2}$ and $N_1= N_3$. Then, $b^*$ exists and satisfies \eqref{eqn: b^* theorem 2}.
\end{lemma}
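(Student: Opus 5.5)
We mirror the proof of Lemma \ref{lemma on u1 exist case 1}, replacing the Riccati analysis by the explicit linear solution available when $N_1=N_3$. In that case \eqref{eqn: ode for v} becomes the linear equation $v'(x)=\frac{\delta}{N_1}-\frac{N_2}{N_1}v(x)$. This gives the explicit formula \eqref{eqn: v(x) theorem 2} with $v(w_0)=\gamma_1/w_0$, and also the identity
\begin{equation}
v'(x)+v(x)^2=\frac{1}{N_1}\left[N_1v(x)^2-N_2v(x)+\delta\right],
\end{equation}
which is the earlier identity with $N_3=N_1$. We will study the sign of the function $F(x):=v'(x)+v(x)^2$ on $[w_0,\infty)$.

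\textbf{Step 1: $F(w_0)<0$.} Evaluating at $w_0$ gives $F(w_0)=\frac{1}{N_1}\left[N_3\frac{\gamma_1^2}{w_0^2}-N_2\frac{\gamma_1}{w_0}+\delta\right]$. This is exactly the expression computed in the proof of Lemma \ref{lemma on u1 exist case 1}. That computation only used the definitions of $w_0,N_1,N_2,N_3$, never the hypothesis $N_1\ne N_3$. Since $\psi(\gamma_1)=0$, it reduces to $\widetilde f(\gamma_1)$ divided by a positive quantity. Each term of $\widetilde f(\gamma_1)$ is negative for $\gamma_1\in(0,1)$ and $\rho<1$, so $F(w_0)<0$. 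We would simply cite that computation.

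\textbf{Step 2: $F$ becomes positive.} From the explicit formula, $v(x)\to\delta/N_2$ monotonically (exponentially fast) as $x\to\infty$, and $v'(x)\to0$. Hence
\begin{equation}
\lim_{x\to\infty}F(x)=\frac{\delta^2}{N_2^2}>0,
\end{equation}
using $\delta>0$ and $N_2>0$. The latter holds because $w_0/w_i>0$ and $\mu_i>0$. By continuity of $F$ and the intermediate value theorem, $F$ has a zero on $(w_0,\infty)$. The infimum $b^*$ in \eqref{eqn: b^* theorem 2} is therefore attained, by closedness of the zero set, and satisfies $b^*>w_0$. Moreover $F<0$ on $[w_0,b^*)$.

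\textbf{Step 3: sign and monotonicity of $v$.} We also record that $v(b^*)>0$, which is needed for $K_2,K_4$ in \eqref{eqn: K2 and K4 from system} and for concavity. Since $F(w_0)<0$ gives $v'(w_0)<-v(w_0)^2<0$, we have $v(w_0)=\gamma_1/w_0>\delta/N_2$. The explicit formula is a convex combination of $\gamma_1/w_0$ and $\delta/N_2$ with weight $e^{-\frac{N_2}{N_1}(x-w_0)}$. It follows that $v$ decreases from $\gamma_1/w_0$ toward $\delta/N_2$ and stays strictly positive throughout.

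The main obstacle is Step 1, namely verifying that the earlier sign computation of $F(w_0)$ transfers verbatim. It does, because the case $N_1=N_3$ only changes the form of $v$ away from $w_0$, not the algebraic value at $w_0$. The rest is elementary.
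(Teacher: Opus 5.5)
Your proposal is correct and follows essentially the paper's argument: you get $v'(w_0)+v(w_0)^2<0$ from the computation in Lemma~\ref{lemma on u1 exist case 1}, show that $v'(x)+v(x)^2\to\delta^2/N_2^2>0$ as $x\to\infty$, and conclude by continuity. The only difference is cosmetic. The paper reaches the positive limit through a proof by contradiction and a lower bound on $v'+v^2$, whereas you read it off directly from the explicit formula for $v$, which is slightly cleaner; your Step 3 on $v(b^*)>0$ supplies what the paper leaves to its remark after Theorem~\ref{theorem 1}.
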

\begin{proof}
    From \eqref{eqn: ode for v}, we have $v'(x)+v(x)^2=\frac{\delta}{N_1}-\frac{N_2}{N_1}v(x) +v(x)^2.$
    Similar to Lemma \ref{lemma on u1 exist case 1}, we obtain $v'(w_0)+v(w_0)^2<0.$
    Suppose $b^*$ does not exist. Since $v(x)^2+v'(x)$ is continuous in $x$, we have $v'(x)+v(x)^2 <0$ for $x>w_0$,
    which implies that $0> v'(x)= \frac{1}{N_1}\left(\delta-N_2v(x)\right).$
    Hence, we obtain $v(x) > \frac{\delta}{N_2}$ and 
    \begin{align}
        v'(x)+v(x)^2 
        & > \frac{\delta}{N_1}-\frac{N_2}{N_1}v(x)+\frac{\delta^2}{N_2^2}\\
        & = \frac{\delta}{N_1}-\frac{N_2}{N_1}\left[\frac{\delta}{N_2}\left[1-e^{-\frac{N_2}{N_1}(x-w_0)}\right]+\frac{\gamma_1}{w_0}e^{-\frac{N_2}{N_1}(x-w_0)}\right]+\frac{\delta^2}{N_2^2}\\
        & = \frac{\delta^2}{N_2^2}-\frac{N_2}{N_1}\left(\frac{\gamma_1}{w_0}-\frac{\delta}{N_2}\right)e^{-\frac{N_2}{N_1}(x-w_0)}.
        \end{align}
    Thus, $\lim_{x\to\infty}[v'(x)+v(x)^2]= \frac{\delta^2}{N_2^2}>0,$
    which is a contradiction. This proves the result.
\end{proof}
Thus, we have obtained the form of the value function $g$ defined in \eqref{eqn: g thm 2}. The corresponding optimal reinsurance and distortion levels are of the same form as in \eqref{eqn: pi-theta thm 1}. The discussion on $g$ defined in \eqref{eqn: g thm 2} being increasing and concave is similar to that in Remark \ref{remark on g' and g'' thm 1}.

\subsection{Proof of Theorem \ref{theorem 3}}

In this section, we present the key results used to obtain Theorem \ref{theorem 3}. The discussion serves as the proof for Theorem \ref{theorem 3}. 

Suppose that $\psi(z)=0$ does not have a solution, or, equivalently, $\delta \geq \frac{\mu_1^2}{2\beta_1\sigma_1^2} + \frac{\mu_2^2}{2\beta_2\sigma_2^2}$. We conjecture that $g(x)=a_2x$ satisfies the HJB equation \eqref{eqn:hjborig}. It is clear that $g'(x) = a_2$ for all $x$. Hence, $a_1 \leq g'(x)$ and $a_2\leq g'(x)$ are immediately satisfied. Moreover, by the definition of \eqref{eqn: defn of b*}, we have $b^*=0$.

Write $\tilde w_0:=\min\{w_1,w_2\}$. In the region $\{x<\tilde w_0\}$, we have
\begin{equation}
    \begin{aligned}
        \sup_{\pi_i\in[0,1]}\inf_{\theta_i\in\mathbb R}\left[\Lc^{\vartheta}(g)(x)+g(x)\sum_{i=1}^2\frac{\theta_i^2}{2\beta_i}\right]&=\frac{A(x)}{2\sigma_1^2\sigma_2^2B(x)}-\delta g(x)\\
        &=\frac{\beta_2\mu_1^2\sigma_2^2+\beta_1\mu_2^2\sigma_1^2-2\delta\beta_1\beta_2\sigma_1^2\sigma_2^2}{2\beta_1\beta_2\sigma_1^2\sigma_2^2}\leq 0.
    \end{aligned}
\end{equation}
In the region $\{x > \tilde w_0\}$, we have, by Lemma \ref{lemma: pi remain constant}, $(\bar\pi_1,\bar\pi_2)(x)=\left(\frac{\tilde w_0}{w_1},\frac{\tilde w_0}{w_2}\right)$. Then,
    \begin{align}
        \sup_{\pi_i\in[0,1]}\inf_{\theta_i\in\mathbb R}\left[\Lc^{\vartheta}(g)(x)+g(x)\sum_{i=1}^2\frac{\theta_i^2}{2\beta_i}\right]
        &=N_1g''(x)+N_2g'(x)-N_3\frac{g'(x)^2}{g(x)}-\delta g(x)\\
        &=a_2\left(N_2-\frac{N_3}{x}-\delta x\right)\leq a_2\left(N_2-2\sqrt{\delta N_3}\right)\\
        &\leq a_2\left(N_2-2\sqrt{N_3\left(\frac{\mu_1^2}{2\beta_1\sigma_1^2}+\frac{\mu_2^2}{2\beta_2\sigma_2^2}\right)}\right)\\
        &= a_2\left(N_2-\sqrt{\mu_1^2\frac{\tilde w_0^2}{w_1^2}+\mu_2^2\frac{\tilde w_0^2}{w_2^2}+\frac{\beta_2\sigma_2^2\tilde w_0^2}{\beta_1\sigma_1^2w_2^2}\mu_1^2+\frac{\beta_1\sigma_1^2\tilde w_0^2}{\beta_2\sigma_2^2w_1^2}\mu_2^2}\right)\\
        &\leq a_2\left(N_2-\sqrt{\mu_1^2\frac{\tilde w_0^2}{w_1^2}+\mu_2^2\frac{\tilde w_0^2}{w_2^2}+2\mu_1\mu_2\frac{\tilde w_0^2}{w_1w_2}}\right)=0,
    \end{align}
where the first and third inequalities follow by the arithmetic mean-geometric mean (AM-GM) inequality while the second inequality follows by $\beta_2\mu_1^2\sigma_2^2+\beta_1\mu_2^2\sigma_1^2-2\delta\beta_1\beta_2\sigma_1^2\sigma_2^2\leq 0$.

\section{Conclusion}\label{sec:conclusion}

We study an optimal dividend payout, reinsurance, and capital injection problem for an insurer with collaborating business lines under model uncertainty. By incorporating ambiguity and a relative entropy-based penalty, we characterize the optimal value function and the corresponding optimal strategies in closed form across several parameter regimes. Our results show that the optimal dividend-capital injection strategy is of barrier type, while the optimal proportional reinsurance coverage and the deviation of the worst-case model from the reference model are decreasing with respect to the aggregate reserve level. The numerical illustrations further show how ambiguity aversion influences the optimal strategies.

{
\setstretch{1}
\bibliographystyle{apalike} 
\bibliography{References}
}	
%
%

\appendix

\section{Proof of Theorem \ref{thm: verification}}\label{app: verification proof}
    \emph{Part I:} Since $C_i$ and $L_i$ are c\`{a}dl\`{a}g processes, we have $dC_i(t)=dC_i^{c}(t)+C_i(t)-C_i(t-)$ and $dL_i(t)=dL_i^{c}(t)+L_i(t)-L_i(t-)$,
    where $C_i^c(t)$ and $L_i^c(t)$ represent the continuous parts of $C_i(t)$ and $L_i(t)$, respectively. Write $(\pi,\theta):=(\pi_1,\pi_2,\theta_1,\theta_2)$. For any finite time $t>0$, using It\^{o}'s formula to $e^{-\delta(t\wedge\tau^u)}w(X^u(t\wedge \tau^u))$ yields
    \begin{align}
        &e^{-\delta(t\wedge\tau^u)}w(X^u(t\wedge \tau^u))\\
        &=w(x)+\int_0^{t\wedge\tau^u} e^{-\delta s}\Lc^{(\pi(s-),\theta(s-))}(w)(X^u(s-)) \dd s-\sum_{i=1}^2 \int_0^{t\wedge\tau^u}e^{-\delta s}\sigma_i\pi_i(s-)w'(X^u(s-)) \dd W^{\mathbb{Q}^{\theta}}_i(s)\\
        &\qquad - \sum_{i=1}^2 \int_0^{t\wedge\tau^u}e^{-\delta s}w'(X^u(s-)) \dd C^c_i(s) + \sum_{i=1}^2 \int_0^{t\wedge\tau^u}e^{-\delta s}w'(X^u(s-)) [\dd L^c_i(s) - \dd L^c_{3-i}(s)] \\
        &\qquad + \sum_{0\leq s \leq t\wedge \tau^u}e^{-\delta s} \left(w(X^u(s))-w(X^u(s-))\right)\\
        &=w(x)+\int_0^{t\wedge\tau^u} e^{-\delta s}\Lc^{(\pi(s-),\theta(s-))}(w)(X^u(s-)) \dd s-\sum_{i=1}^2 \int_0^{t\wedge\tau^u}e^{-\delta s}\sigma_i\pi_i(s-)w'(X^u(s-)) \dd W^{\mathbb{Q}^{\theta}}_i(s)\\
        &\qquad - \sum_{i=1}^2 \int_0^{t\wedge\tau^u}e^{-\delta s}w'(X^u(s-)) \dd C^c_i(s) + \sum_{0\leq s \leq t\wedge \tau^u}e^{-\delta s} \left(w(X^u(s))-w(X^u(s-))\right), 
    \end{align}
    where $\theta_1$ and $\theta_2$ satisfy the Novikov condition. Since $w(x)$ is concave, $w'(x) \geq a_2$, and $\pi_i(s-)\in[0,1]$, it holds that the process $\left\{\sum_{i=1}^2 \int_0^{t\wedge\tau^u}e^{-\delta s}\sigma_i\pi_i(s-)w'(X^u(s-)) \dd W^{\mathbb{Q}^{\theta}}_i(s)\right\}_{t\geq 0}$ is a true martingale under the alternative probability measure $\mathbb{Q}^{\theta}$. Taking expectations yields 
    \begin{align}
        &\mathbb E^{\mathbb Q^{\theta}}\left[e^{-\delta(t\wedge\tau^u)}w(X^u(t\wedge \tau^u))\right]\\
        &=w(x)+\mathbb E^{\mathbb Q^{\theta}}\Bigg[\int_0^{t\wedge\tau^u} e^{-\delta s}\Lc^{(\pi(s-),\theta(s-))}(w)(X^u(s-)) \dd s - \sum_{i=1}^2 \int_0^{t\wedge\tau^u}e^{-\delta s}w'(X^u(s-)) \dd C^c_i(s) \\
        &\qquad + \sum_{0\leq s \leq t\wedge \tau^u}e^{-\delta s} \left(w(X^u(s))-w(X^u(s-))\right)\Bigg]. \label{proof eq 14}
    \end{align}
    Since $w'(x)\geq a_2$ and $X^u(s-)\geq X^u(s)$, we have
    \begin{align}
        w(X^u(s-))-w(X^u(s)) &\geq a_2 \left(X^u(s-)-X^u(s)\right)\\
        &=a_2\left(X_1^u(s-)+X_2^u(s-)-X_1^u(s)-X_2^u(s)\right) \\
        &=a_2\left( C_1(s) + C_2(s) - C_1(s-) - C_2(s-) \right).
    \end{align}
    Since $C_1$ and $C_2$ are nondecreasing, we have $\dd C^c_1(s) , \dd C^c_2(s) \geq 0$. Hence,
    \begin{align}
        &\mathbb E^{\mathbb Q^{\theta}}\left[\sum_{i=1}^2 \int_0^{t\wedge\tau^u}e^{-\delta s}w'(X^u(s-)) \dd C^c_i(s) - \sum_{0\leq s \leq t\wedge \tau^u}e^{-\delta s} \left(w(X^u(s))-w(X^u(s-))\right)\right]\\
        &\geq \mathbb E^{\mathbb Q^{\theta}}\left[a_2\sum_{i=1}^2 \int_0^{t\wedge\tau^u}e^{-\delta s}  \dd C^c_i(s) + a_2\sum_{0\leq s \leq t\wedge \tau^u}e^{-\delta s} \left(C_1(s) + C_2(s) - C_1(s-) - C_2(s-)\right)\right]\\
        &= \mathbb E^{\mathbb Q^{\theta}}\left[a_2\sum_{i=1}^2 \int_0^{t\wedge\tau^u}e^{-\delta s}  \dd C_i(s) \right]\geq  \mathbb E^{\mathbb Q^{\theta}}\left[\sum_{i=1}^2 a_i \int_0^{t\wedge\tau^u}e^{-\delta s}  \dd C_i(s) \right], \label{proof eq 16}
    \end{align}
    where the last inequality is due to $a_1\leq a_2$. Combining \eqref{proof eq 14} and \eqref{proof eq 16} and replacing $\theta$ with $\theta^*$ yield
    \small
    \begin{align}
        &w(x)\\
        &\geq \mathbb E^{\mathbb Q^{\theta^*}} \left[e^{-\delta(t\wedge\tau^u)}w(X^u(t\wedge \tau^u))-\int_0^{t\wedge\tau^u} e^{-\delta s}\Lc^{(\pi(s-),\theta^*(s-))}(w)(X^u(s-)) \dd s + \sum_{i=1}^2 a_i \int_0^{t\wedge\tau^u}e^{-\delta s}  \dd C_i(s)\right] \\
        &\geq \mathbb E^{\mathbb Q^{\theta^*}} \Bigg[e^{-\delta(t\wedge\tau^u)}w(X^u(t\wedge \tau^u))-\int_0^{t\wedge\tau^u} e^{-\delta s}\left(\Lc^{(\pi(s-),\theta^*(s-))}(w)(X^u(s-))+\sum_{i=1}^2a_i\frac{(\theta_i^*(s-))^2}{2\widetilde \beta_i}w(X^u(s-)) \right)\dd s\\
        &\qquad\qquad+ \sum_{i=1}^2 a_i \left(\int_0^{t\wedge\tau^u}e^{-\delta s}  \dd C_i(s)+\int_0^{t\wedge \tau^u}e^{-\delta s}\frac{(\theta_i^*(s-))^2}{2\widetilde \beta_i}w(X^u(s-)) \dd s\right)\Bigg] \\
        &\geq  \mathbb E^{\mathbb Q^{\theta^*}}\left[\sum_{i=1}^2 a_i \left(\int_0^{t\wedge\tau^u}e^{-\delta s}  \dd C_i(s)+\int_0^{t\wedge \tau^u}e^{-\delta s}\frac{(\theta_i^*(s-))^2}{2\widetilde \beta_i}w(X^u(s-)) \dd s\right)\right],
    \end{align}\normalsize
    where the last inequality is due to \eqref{proof eqn 9} and the nonnegativity of $e^{-\delta(t\wedge\tau^u)}w(X^u(t\wedge \tau^u))$. Letting $t\to\infty$ and using the monotone convergence theorem yield
    \begin{align}
        w(x)
        &\geq  \mathbb E^{\mathbb Q^{\theta^*}}\left[\sum_{i=1}^2 a_i \left(\int_0^{\tau^u}e^{-\delta s}  \dd C_i(s)+\int_0^{\tau^u}e^{-\delta s}\frac{(\theta_i^*(s-))^2}{2\widetilde \beta_i}w(X^u(s-)) \dd s\right)\right]\label{proof eqn 19}\\
        & \geq \inf_{\theta\in\Theta} \mathbb E^{\mathbb Q^{\theta^*}}\left[\sum_{i=1}^2 a_i \left(\int_0^{\tau^u}e^{-\delta s}  \dd C_i(s)+\int_0^{\tau^u}e^{-\delta s}\frac{(\theta_i(s-))^2}{2\widetilde \beta_i}w(X^u(s-)) \dd s\right)\right].
    \end{align}
    Since the above inequalities hold for any admissible $u$, we have
    \begin{equation}\label{proof eqn 21}
        w(x) \geq \sup_{u\in\mathcal U}\inf_{\theta\in\Theta} \mathbb E^{\mathbb Q^{\theta^*}}\left[\sum_{i=1}^2 a_i \left(\int_0^{\tau^u}e^{-\delta s}  \dd C_i(s)+\int_0^{\tau^u}e^{-\delta s}\frac{(\theta_i(s-))^2}{2\widetilde \beta_i}w(X^u(s-)) \dd s\right)\right],\quad x\geq 0.
    \end{equation}

    \emph{Part II}: Consider the strategy $u^*(X^{u^*}(t))=(\pi_1^*(X^{u^*}(t)),\pi_2^*(X^{u^*}(t)),C_{1,b^*},C_{2,b^*},L_{1,b^*},L_{2,b^*})$. Under the barrier strategy $(C_{1,b^*},C_{2,b^*},L_{1,b^*},L_{2,b^*})$, at time $t=0$, Line 2 pays the amount $X^{u^{*}}(0-) - b^*$ as dividends if and only if $X^{u^{*}}(0-) > b^*$. Since Line 1 does not pay dividends, it holds that $C_{1,b^*}(t)=0$ for all $t\geq 0$. At any time $t>0$, the aggregate reserve will not exceed $b^*$ and there will be no dividend payments. The dividend payout is made as continuous payments at time $t>0$ if and only if $X^{u^*}(t-)=b^*$; that is, $\dd C_{i,b^*}(t)=\mathds 1_{\{X^{u^*}(t-)=b^*\}} \dd C_{i,b^*}(t)$ and $C_{i,b^*}(t)-C_{i,b^*}(t-)=0$ for all $t>0$. 

    It follows that for any $x\in[0,b^*]$, given that $X^{u^*}(0-)=x$, $X^{u^*}(t)$ is always continuous until ruin time. For $x\in[0,b^*]$ and $t>0$, applying It\^{o}'s lemma to $e^{-\delta(t\wedge \tau^{u^*})}w(X^{u^*}(t\wedge \tau^{u^*}))$ and taking expectations yield
   \begin{align}
        &\mathbb E^{\mathbb Q^{\theta}} \left[e^{-\delta(t\wedge \tau^{u^*})}w(X^{u^*}(t\wedge \tau^{u^*}))\right]\\
        &=w(x) + \mathbb E^{\mathbb Q^{\theta}} \left[ \int_0^{t\wedge \tau^{u^*}} e^{-\delta s} \Lc ^{(\pi^*(s-),\theta(s-))}(w)(X^{u^*}(s-)) \dd s - \sum_{i=1}^2 \int_0^{t\wedge \tau^{u^*}} e^{-\delta s} w'(X^{u^*}(s-)) \dd C^{c}_{i,b^*}(s) \right],
        \label{proof eqn 26}
    \end{align}
    where $C^{c}_{i,b^*}(t)$ is the continuous part of $C_{i,b^*}(t)$. Moreover, for $x\in[0,b^*]$, we have
    \begin{align}
        \mathbb E^{\mathbb Q^{\theta}} \left[\sum_{i=1}^2 \int_0^{t\wedge \tau^{u^*}} e^{-\delta s} w'(X^{u^*}(s-)) \dd C^{c}_{i,b^*}(s) \right]
        &=\mathbb E^{\mathbb Q^{\theta}} \left[\sum_{i=1}^2 \int_0^{t\wedge \tau^{u^*}} e^{-\delta s} w'(b^*) \mathds 1_{\{X^{u^*}(s-)=b^*\}} \dd C^{c}_{i,b^*}(s) \right]\\
        &=\mathbb E^{\mathbb Q^{\theta}} \left[\sum_{i=1}^2 \int_0^{t\wedge \tau^{u^*}} e^{-\delta s} a_2  \dd C^{c}_{i,b^*}(s) \right]\\
        &=\mathbb E^{\mathbb Q^{\theta}} \left[\sum_{i=1}^2 \int_0^{t\wedge \tau^{u^*}} e^{-\delta s} a_2  \dd C_{i,b^*}(s) \right]\\
        &=\mathbb E^{\mathbb Q^{\theta}} \left[\sum_{i=1}^2 \int_0^{t\wedge \tau^{u^*}} e^{-\delta s} a_i  \dd C_{i,b^*}(s) \right].\label{proof eqn 27}
    \end{align}
    Substituting \eqref{proof eqn 27} into \eqref{proof eqn 26} and using \eqref{proof eqn 10} yield
    \small\begin{align}
        w(x)&=\mathbb E^{\mathbb Q^{\theta}} \Bigg[e^{-\delta(t\wedge \tau^{u^*})}w(X^{u^*}(t\wedge \tau^{u^*}))-\int_0^{t\wedge \tau^{u^*}}e^{-\delta s}\left(\Lc ^{(\pi^*(s-),\theta(s-))}(w)(X^{u^*}(s-))+\sum_{i=1}^2a_i\frac{(\theta_i(s-))^2}{2\widetilde \beta_i}\right)\dd s\\
        &\qquad\qquad + \sum_{i=1}^2 a_i \left(\int_0^{t\wedge\tau^{u^*}}e^{-\delta s}  \dd C_{i,b^*}(s)+\int_0^{t\wedge \tau^{u^*}}e^{-\delta s}\frac{(\theta_i(s-))^2}{2\widetilde \beta_i}w(X^{u^*}(s-)) \dd s\right) \Bigg]\\
        &\leq \mathbb E^{\mathbb Q^{\theta}} \Bigg[e^{-\delta(t\wedge \tau^{u^*})}w(X^{u^*}(t\wedge \tau^{u^*}))\\
        &\qquad\qquad + \sum_{i=1}^2 a_i \left(\int_0^{t\wedge\tau^{u^*}}e^{-\delta s}  \dd C_{i,b^*}(s)+\int_0^{t\wedge \tau^{u^*}}e^{-\delta s}\frac{(\theta_i(s-))^2}{2\widetilde \beta_i}w(X^{u^*}(s-)) \dd s\right) \Bigg]\label{proof eqn 28}.
    \end{align}\normalsize
    Using the dominated convergence theorem and $X^{u^*}(\tau^{u^*})=0$ yields 
    \begin{equation}
        \lim_{t\to\infty} \mathbb E^{\mathbb Q^{\theta}} \left[e^{-\delta(t\wedge \tau^{u^*})}w(X^{u^*}(t\wedge \tau^{u^*}))\right]=\mathbb E^{\mathbb Q^{\theta}} \left[e^{-\delta\tau^{u^*}}w(X^{u^*}(\tau^{u^*}))\right]=0.
    \end{equation}
    Using the monotone convergence theorem on \eqref{proof eqn 28} yields
    \begin{align}\label{proof eqn 29}
        w(x)\leq \mathbb E^{\mathbb Q^{\theta}}\left[\sum_{i=1}^2 a_i \left(\int_0^{\tau^{u^*}}e^{-\delta s}  \dd C_{i,b^*}(s)+\int_0^{\tau^{u^*}}e^{-\delta s}\frac{(\theta_i(s-))^2}{2\widetilde \beta_i}w(X^{u^*}(s-)) \dd s\right)\right].
    \end{align}
    Due to the arbitrariness of $\theta\in\Theta$, we have, for $x\in[0,b^*]$,
    \begin{align}
        w(x)&\leq \inf_{\theta \in \Theta }\mathbb E^{\mathbb Q^{\theta}}\left[\sum_{i=1}^2 a_i \left(\int_0^{\tau^{u^*}}e^{-\delta s}  \dd C_{i,b^*}(s)+\int_0^{\tau^{u^*}}e^{-\delta s}\frac{(\theta_i(s-))^2}{2\widetilde \beta_i}w(X^{u^*}(s-)) \dd s\right)\right]\\
        &\leq \sup_{u\in \Uc}\inf_{\theta \in \Theta }\mathbb E^{\mathbb Q^{\theta}}\left[\sum_{i=1}^2 a_i \left(\int_0^{\tau^{u}}e^{-\delta s}  \dd C_{i}(s)+\int_0^{\tau^{u}}e^{-\delta s}\frac{(\theta_i(s-))^2}{2\widetilde \beta_i}w(X^{u}(s-)) \dd s\right)\right]\label{proof eqn 30}.
    \end{align}

    Recall that $w'(x)=a_2$ for $x\geq b^*$. By the continuity of $w(x)$ and \eqref{proof eqn 29}, we obtain
    \begin{align}
        w(x)
        &=w(b^*) + a_2 (x-b^*)\\
        &\leq \mathbb E_{b^*}^{\mathbb Q^{\theta}}\left[\sum_{i=1}^2 a_i \left(\int_0^{\tau^{u^*}}e^{-\delta s}  \dd C_{i,b^*}(s)+\int_0^{\tau^{u^*}}e^{-\delta s}\frac{(\theta_i(s-))^2}{2\widetilde \beta_i}w(X^{u^*}(s-)) \dd s\right)\right] + a_2 (x-b^*) \\
        \label{proof eqn 31}\\
        &= \mathbb E^{\mathbb Q^{\theta}}\left[\sum_{i=1}^2 a_i \left(\int_0^{\tau^{u^*}}e^{-\delta s}  \dd C_{i,b^*}(s)+\int_0^{\tau^{u^*}}e^{-\delta s}\frac{(\theta_i(s-))^2}{2\widetilde \beta_i}w(X^{u^*}(s-)) \dd s\right)\right], \label{proof eqn 32}
    \end{align}
    where $\mathbb E_{b^*}^{\mathbb Q^{\theta}}[\cdot]$ is the conditional expectation given $X^{u^*}(0)=x_1+x_2=b^*$. Similar to \eqref{proof eqn 30}, we have, for $x>b^*$,
    \begin{equation}\label{proof eqn 33}
        w(x)\leq \sup_{u\in \Uc} \inf_{\theta \in \Theta} \mathbb E^{\mathbb Q^{\theta}} \left[\sum_{i=1}^2 a_i \left(\int_0^{\tau^{u}}e^{-\delta s}  \dd C_{i}(s)+\int_0^{\tau^{u}}e^{-\delta s}\frac{(\theta_i(s-))^2}{2\widetilde \beta_i}w(X^{u}(s-)) \dd s\right) \right].
    \end{equation}
    Combining \eqref{proof eqn 21} with \eqref{proof eqn 30} and \eqref{proof eqn 33} yields
    \begin{equation}
        w(x) =\sup_{u\in \Uc} \inf_{\theta \in \Theta} \mathbb E^{\mathbb Q^{\theta}} \left[\sum_{i=1}^2 a_i \left(\int_0^{\tau^{u}}e^{-\delta s}  \dd C_{i}(s)+\int_0^{\tau^{u}}e^{-\delta s}\frac{(\theta_i(s-))^2}{2\widetilde \beta_i}w(X^{u}(s-)) \dd s\right) \right],\quad x\geq 0.
    \end{equation}

    \emph{Part III:} Replacing $\theta$ in \eqref{proof eqn 28}, \eqref{proof eqn 29}, \eqref{proof eqn 31}, and \eqref{proof eqn 32} with $\theta^*$ and using \eqref{proof eqn 11} make \eqref{proof eqn 28}, \eqref{proof eqn 29}, \eqref{proof eqn 31}, and \eqref{proof eqn 32} equalities. Moreover, we have
    \begin{align}
        w(x) = \mathbb E^{\mathbb Q^{\theta^*}}\left[\sum_{i=1}^2 a_i \left(\int_0^{\tau^{u^*}}e^{-\delta s}  \dd C_{i,b^*}(s)+\int_0^{\tau^{u^*}}e^{-\delta s}\frac{(\theta^*_i(s-))^2}{2\widetilde \beta_i}w(X^{u^*}(s-)) \dd s\right)\right], \quad x\geq 0.
    \end{align}
    Since \eqref{proof eqn 19} holds for any admissible $u$, it follows that
    \begin{align}
        w(x)&\geq \sup_{u\in \Uc} \mathbb E^{\mathbb Q^{\theta^*}}\left[\sum_{i=1}^2 a_i \left(\int_0^{\tau^u}e^{-\delta s}  \dd C_i(s)+\int_0^{\tau^u}e^{-\delta s}\frac{(\theta_i^*(s-))^2}{2\widetilde \beta_i}w(X^u(s-)) \dd s\right)\right]\\
        &\geq \inf_{\theta\in\Theta}\sup_{u\in \Uc} \mathbb E^{\mathbb Q^{\theta}}\left[\sum_{i=1}^2 a_i \left(\int_0^{\tau^u}e^{-\delta s}  \dd C_i(s)+\int_0^{\tau^u}e^{-\delta s}\frac{(\theta_i(s-))^2}{2\widetilde \beta_i}w(X^u(s-)) \dd s\right)\right],\label{proof eqn 35}
    \end{align} for $x\geq 0$.
    Combining \eqref{proof eqn 35} with \eqref{proof eqn 30} and \eqref{proof eqn 33} leads to
    \small\begin{align}
        &\inf_{\theta\in\Theta}\sup_{u\in \Uc} \mathbb E^{\mathbb Q^{\theta}}\left[\sum_{i=1}^2 a_i \left(\int_0^{\tau^u}e^{-\delta s}  \dd C_i(s)+\int_0^{\tau^u}e^{-\delta s}\frac{(\theta_i(s-))^2}{2\widetilde \beta_i}w(X^u(s-)) \dd s\right)\right] \\
        &\leq w(x)\leq  \sup_{u\in \Uc}\inf_{\theta\in\Theta}\mathbb E^{\mathbb Q^{\theta}}\left[\sum_{i=1}^2 a_i \left(\int_0^{\tau^u}e^{-\delta s}  \dd C_i(s)+\int_0^{\tau^u}e^{-\delta s}\frac{(\theta_i(s-))^2}{2\widetilde \beta_i}w(X^u(s-)) \dd s\right)\right].
    \end{align}\normalsize
    Since it always holds that
    \begin{align}
        &\sup_{u\in \Uc}\inf_{\theta\in\Theta}\mathbb E^{\mathbb Q^{\theta}}\left[\sum_{i=1}^2 a_i \left(\int_0^{\tau^u}e^{-\delta s}  \dd C_i(s)+\int_0^{\tau^u}e^{-\delta s}\frac{(\theta_i(s-))^2}{2\widetilde \beta_i}w(X^u(s-)) \dd s\right)\right] \\
        &\leq \inf_{\theta\in\Theta}\sup_{u\in \Uc} \mathbb E^{\mathbb Q^{\theta}}\left[\sum_{i=1}^2 a_i \left(\int_0^{\tau^u}e^{-\delta s}  \dd C_i(s)+\int_0^{\tau^u}e^{-\delta s}\frac{(\theta_i(s-))^2}{2\widetilde \beta_i}w(X^u(s-)) \dd s\right)\right],
    \end{align}
    the result is proved.

\section{Proof of Theorem \ref{theorem no uncertainty}}\label{app: theorem no model unc}

Suppose $0<\rho\leq  \frac{\mu_1/\sigma_1}{\mu_2/\sigma_2}\leq\frac{1}{\rho}$ or $-1< \rho \leq 0$. The optimization over $\pi_i$ (reinsurance decision) can then be simplified to:
\begin{align}
        \sup_{\pi_1\in[0,1],\pi_2\in[0,1]} \left\{\left(\frac{1}{2}\sigma_1^2\pi_1^2+\frac{1}{2}\sigma_2^2\pi_2^2+\rho\sigma_1\sigma_2\pi_1\pi_2\right)g''(x) + \left(\mu_1\pi_1+\mu_2\pi_2\right)g'(x)\right\},
\end{align}
and, ignoring the constraints over $[0,1]$, we obtain the following candidate maximizers
\begin{align}
    \widehat \pi_1(x)=-\frac{\mu_1\sigma_2-\rho\mu_2\sigma_1}{\sigma_1^2\sigma_2(1-\rho^2)}\cdot\frac{g'(x)}{g''(x)}\quad\mbox{and}\quad \widehat \pi_2(x)=-\frac{\mu_2\sigma_1-\rho\mu_1\sigma_2}{\sigma_1\sigma_2^2(1-\rho^2)}\cdot\frac{g'(x)}{g''(x)}.
\end{align}

In the region $\{x<w_0\}$, the HJB equation \eqref{eqn: hjb no model unc} becomes
\begin{equation}
    -\frac{(\mu_1\sigma_2-\mu_2\sigma_1)^2+2(1-\rho)\mu_1\mu_2\sigma_1\sigma_2}{2\sigma_1^2\sigma_2^2(1-\rho^2)}\cdot\frac{g'(x)^2}{g''(x)}-\delta g(x)=0.
\end{equation}
Using the ansatz $g_1(x)=K_1x^{\gamma_1}$ yields $\gamma_1$ defined in \eqref{eqn: gamma1 no mod.unc.}. Hence, we obtain $\bar\pi_1(x)=\frac{x}{w_1}$ and $\bar\pi_2(x)=\frac{x}{w_2}$ as candidate maximizers for the reinsurance levels, where $w_1$ and $w_2$ are defined in \eqref{eqn: w1 and w2}. It follows that $w_0=\min\{w_1,w_2\}$. 

In the region $\{w_0<x<b^*\}$, the HJB equation \eqref{eqn: hjb no model unc} becomes $N_1g''(x)+N_2g'(x)-\delta g(x)=0$, where $N_1$ and $N_2$ are defined in \eqref{eqn: N1, N2, N3}. Its solution is given by
\begin{equation}\label{eqn: g2 no mod unc}
    g_2(x)=K_{2+}e^{\gamma_{2+} x} + K_{2-} e^{\gamma_{2-} x},
\end{equation}
where $\gamma_{2\pm}$ is defined in \eqref{eqn: v(x), K3, gamma2pm thm 1}. In the region $\{x> b^*\}$, $g$ must satisfy $g'(x)=a_2$. Hence, we must have $g_3(x)=a_2(x-b^*+K_3).$

To ensure that $g$ is twice continuously differentiable, we require $g$, $g'$, and $g''$ to be continuous at the switching points $w_0$ and $b^*$. Since in the neighborhood of $w_0$ the function $g$ satisfies \eqref{eqn:hjborig}, it suffices to show that $g$ and $g'$ are continuous at $x=w_0$. Let $\alpha_{2\pm:}=\frac{K_{2\pm}}{K_1}e^{\gamma_{2\pm}w_0}$. We then have the following system of equations:
\begin{equation}
    \begin{aligned}
        w_0^{\gamma_1}&= \alpha_{2+} + \alpha_{2-},\\
        \gamma_1w_0^{\gamma_1-1}&= \gamma_{2+}\alpha_{2+} +  \gamma_{2-}\alpha_{2-},
    \end{aligned}
\end{equation}
whose solution is given by $\alpha_{2+}=\frac{w_0^{\gamma_1-1}(\gamma_1-\gamma_{2-}w_0)}{\gamma_{2+}-\gamma_{2-}}$ and $\alpha_{2-}=\frac{w_0^{\gamma_1-1}(\gamma_{2+}w_0-\gamma_1)}{\gamma_{2+}-\gamma_{2-}}$.
It must be noted that the following equations hold: (i) $\gamma_{2\pm} =\frac{1-\gamma_1}{w_0}\left(-1\pm \frac{1}{\sqrt{1-\gamma_1}}\right)$; (ii) $\gamma_{2+}-\gamma_{2-}=\frac{2\sqrt{1-\gamma_1}}{w_0}$; (iii) $\gamma_1-\gamma_{2-}w_0=1+\sqrt{1-\gamma_1}$; and (iv) $\gamma_{2+}w_0-\gamma_{1}=\sqrt{1-\gamma_1}-1$.
Hence, $\alpha_{2+}=-\frac{w_0^{\gamma_1+1}}{2(1-\gamma_1)}\gamma_{2-}>0$ and $\alpha_{2-}=-\frac{w_0^{\gamma_1+1}}{2(1-\gamma_1)}\gamma_{2+}<0.$
We can then rewrite $g_2$ in \eqref{eqn: g2 no mod unc} as $g_2(x)=-\lambda\left[\gamma_{2-}e^{\gamma_{2+}(x-w_0)}+\gamma_{2+}e^{\gamma_{2-}(x-w_0)}\right],$
where $\lambda:=\frac{K_1 w_0^{\gamma_1+1}}{2(1-\gamma_1)}$.

At $x=b^*$, we have the following system of equations:
\begin{equation}\label{eqn: system at u2 no modunc}
    \begin{aligned}
        -\lambda\left[\gamma_{2-}e^{\gamma_{2+}(b^*-w_0)}+\gamma_{2+}e^{\gamma_{2-}(b^*-w_0)}\right]&= a_2 K_3,\\
        -\lambda\gamma_{2+}\gamma_{2-}\left[e^{\gamma_{2+}(b^*-w_0)}+e^{\gamma_{2-}(b^*-w_0)}\right]&= a_2 ,\\
        -\lambda\gamma_{2+}\gamma_{2-}\left[\gamma_{2+}e^{\gamma_{2+}(b^*-w_0)}+\gamma_{2-}e^{\gamma_{2-}(b^*-w_0)}\right]&= 0.
    \end{aligned}
\end{equation}
From the third equation in \eqref{eqn: system at u2 no modunc}, we must have $\gamma_{2+}e^{\gamma_{2+}(b^*-w_0)}+\gamma_{2-}e^{\gamma_{2-}(b^*-w_0)}=0,$
which is equivalent to \eqref{eqn: b^* no mod.unc.}.
Since $\gamma_{2+}+\gamma_{2-}<0$, we have $b^*>w_0$.

From the second equation in \eqref{eqn: system at u2 no modunc}, we obtain \eqref{eqn: lambda no mod.unc.}.
Dividing the first equation in \eqref{eqn: system at u2 no modunc} by the second equation yields
    \begin{align}
        K_3
        &=\frac{\gamma_{2-}\left(-\frac{\gamma_{2-}}{\gamma_{2+}}\right)^{\frac{\gamma_{2+}}{\gamma_{2+}-\gamma_{2-}}}+\gamma_{2+}\left(-\frac{\gamma_{2-}}{\gamma_{2+}}\right)^{\frac{\gamma_{2-}}{\gamma_{2+}-\gamma_{2-}}}}{\gamma_{2+}\gamma_{2-}\left[\left(-\frac{\gamma_{2-}}{\gamma_{2+}}\right)^{\frac{\gamma_{2+}}{\gamma_{2+}-\gamma_{2-}}}+\left(-\frac{\gamma_{2-}}{\gamma_{2+}}\right)^{\frac{\gamma_{2+}}{\gamma_{2+}-\gamma_{2-}}}\right]}=\frac{\gamma_{2+}+\gamma_{2-}}{\gamma_{2+}\gamma_{2-}}=\frac{1}{\delta}\left(\mu_1\frac{w_0}{w_1}+\mu_2\frac{w_0}{w_2}\right)=\frac{N_2}{\delta}.
    \end{align}Thus, we have obtained the value function given in \eqref{eqn: g no mod.unc.}. The corresponding optimal reinsurance levels are given by \eqref{eqn: pi no mod.unc.}.

\section{Additional Numerical Results}
\label{sec:MU comp b}

Figures \ref{fig:contour_b} and \ref{fig:MUCompb} and Table \ref{tab:MUComp w0 bstar b} illustrate the impact of model uncertainty on the insurer's optimal strategies when the reserve processes of both lines have the same drift and diffusion coefficients. The key result in these analyses is the symmetry in the effect of ambiguity aversion of each line on the zero-reinsurance threshold $w_0$ and the optimal barrier $b^*$. Furthermore, if only Line $i$ is ambiguity averse, then it is optimal for the insurer to internalize all the risk of Line $i$ when the underlying reserve value reaches the threshold $w_0$. The behavior of the value function in response to ambiguity aversion, shown in Figure \ref{fig:MUCompb} is consistent with observations in \citet{feng2021}, where the value function shifts downward when at least one line is ambiguity averse due to making decisions in the worst-case scenario.

\begin{figure}[h]
    \centering
        \begin{subfigure}[H]{0.25\textwidth}
            \centering
            \includegraphics[width=\linewidth]{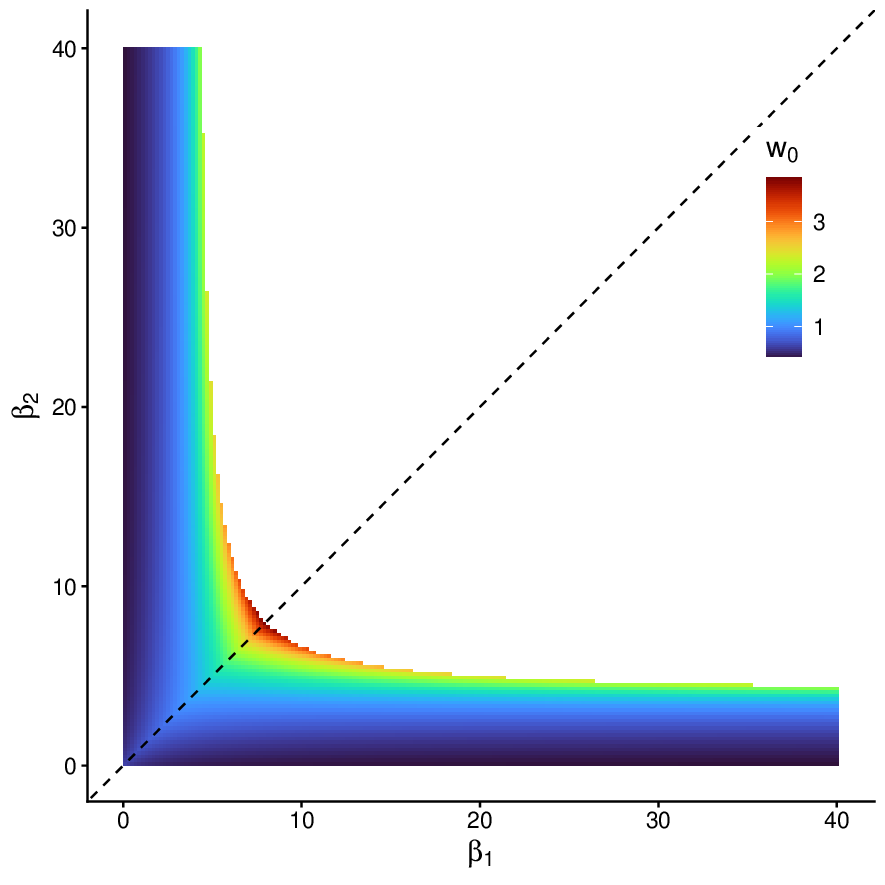} 
            \caption{$w_0$}
        \end{subfigure}
        \qquad
        \begin{subfigure}[H]{0.25\textwidth}
            \centering
            \includegraphics[width=\linewidth]{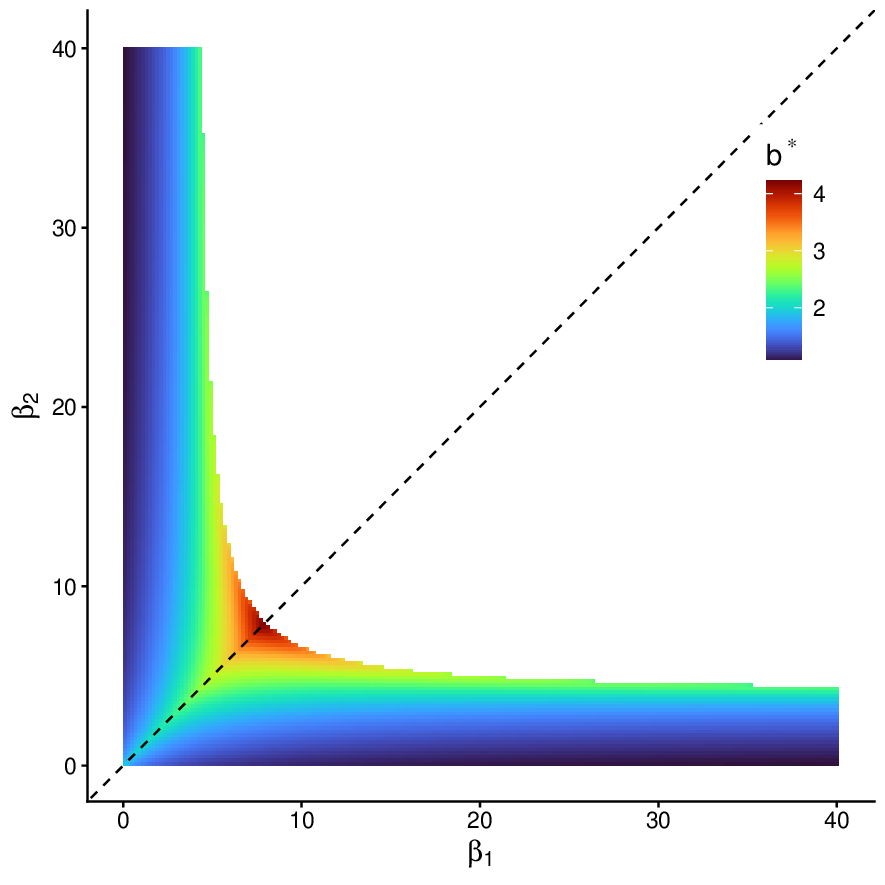}
            \caption{$b^*$}
        \end{subfigure}
        \\
        \caption{Contour plot of the reinsurance threshold $w_0$ and the optimal barrier $b^*$ as a function of the ambiguity-aversion parameters $\beta_1$ and $\beta_2$ when $\mu_1 = \mu_2 = 2$ and $\sigma_1 = \sigma_2 = 1$. 
        }
    \label{fig:contour_b}
\end{figure}

\begin{figure}[h]
    \centering
    \begin{subfigure}[]{0.25\textwidth}
        \centering
        \includegraphics[width=\linewidth]{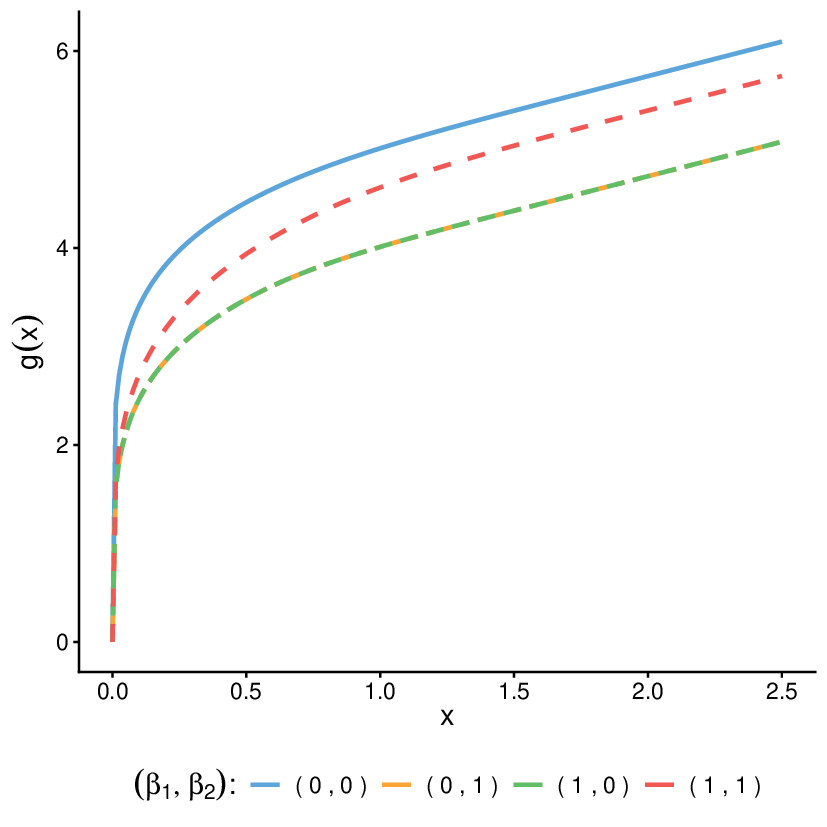} 
    \end{subfigure}
    \qquad
    \begin{subfigure}[]{0.25\textwidth}
        \centering
        \includegraphics[width=\linewidth]{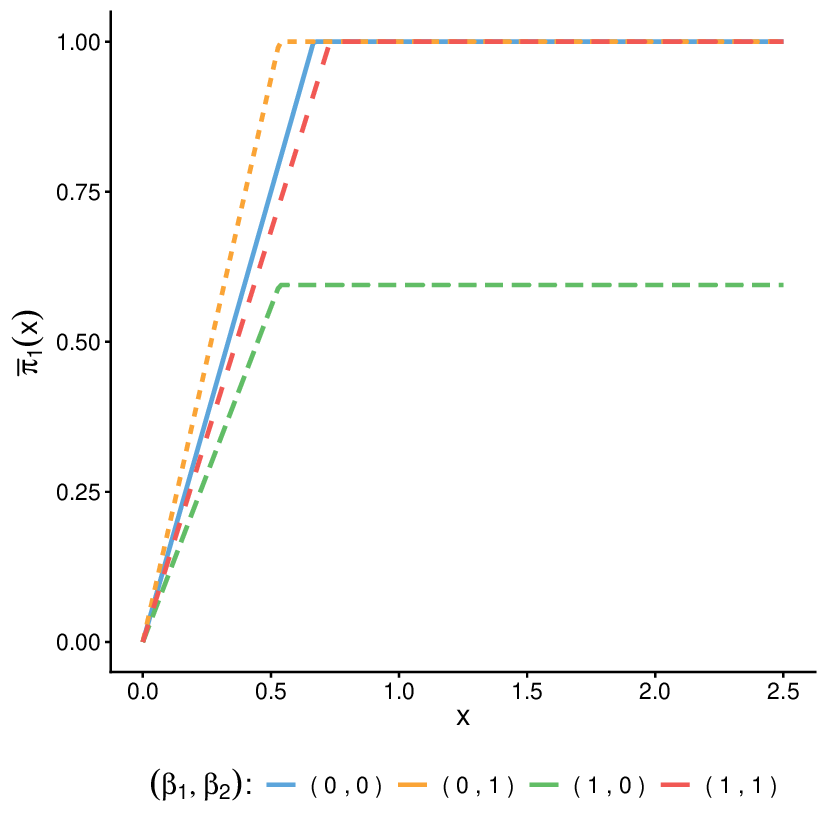}
    \end{subfigure}
    \qquad
    \begin{subfigure}[]{0.25\textwidth}
        \centering
        \includegraphics[width=\linewidth]{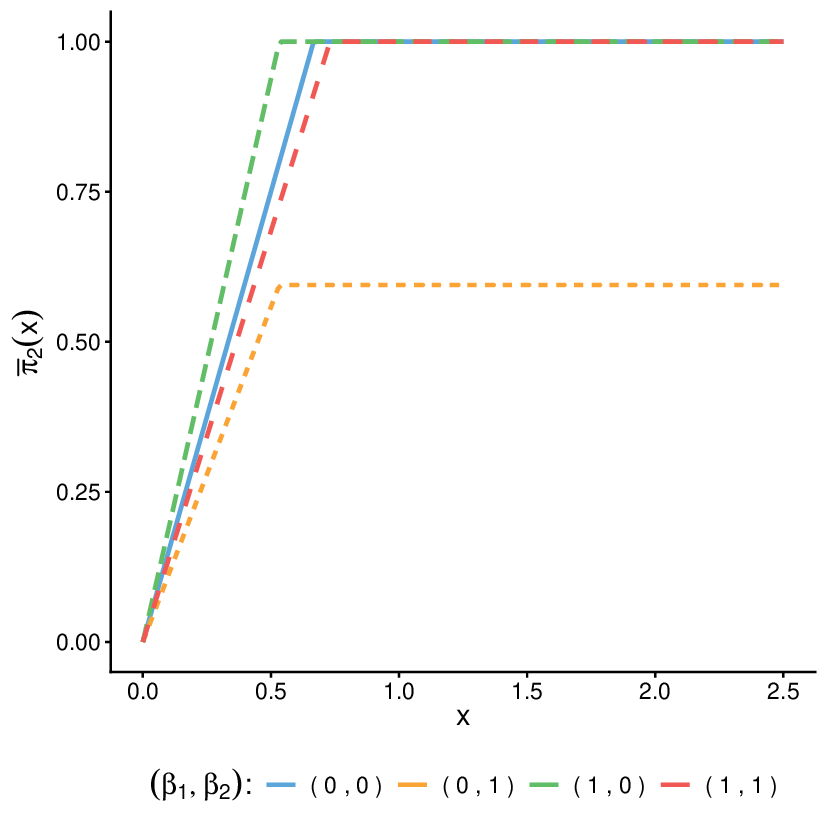}
    \end{subfigure}
    \caption{Value function $g(x)$ and optimal reinsurance strategies $\bar{\pi}_1(x), \bar{\pi}_2(x)$ for various values of $\beta_1$ and $\beta_2$ when $\mu_1 = \mu_2 = 2$ and $\sigma_1 = \sigma_2 = 1$. The value functions are identical when $(\beta_1, \beta_2) \in \{(0,1), (1,0)\}$.}
    \label{fig:MUCompb}
\end{figure}

\begin{table}[h]
    \caption{Impact of the ambiguity aversion parameter (and model uncertainty) on the reinsurance threshold $w_0$ and the optimal barrier $b^*$ when $\mu_1 = \mu_2 = 2$ and $\sigma_1 = \sigma_2 = 1$.}
    \label{tab:MUComp w0 bstar b}
    \centering
    \begin{tabular}{cccc}
    \toprule
    $\beta_1$ & $\beta_2$ & $w_0$ & $b^*$\\
    \midrule
    0 & 0 & 0.6666667 & 1.794599\\
    1 & 0 & 0.5330534 & 1.568814\\
    0 & 1 & 0.5330534 & 1.568814\\
    1 & 1 & 0.7306020 & 2.034350\\
    \bottomrule
    \end{tabular}
\end{table}

\end{document}